\definecolor{darkred}{rgb}{0.5,0.15,0.15}
\definecolor{shadecolor}{rgb}{0.85,0.85,0.85}
\newtheorem{thm}{Theorem}[section]
\newtheorem{theorem}{Theorem}
\newtheorem*{thm*}{Theorem}
\newtheorem{lemma}{Lemma}
\newtheorem{prop}[thm]{Proposition}
\newtheorem{cor}{Corollary}[section]
\newtheorem{defn}{Definition}
\theoremstyle{definition}
\newtheorem{ex}{Example}[section]
\theoremstyle{remark}
\newtheorem*{rmk}{Remark}
\newcommand{\C}{\mathbb C}
\newcommand{\R}{\mathbb R}
\newcommand{\dd}[1]{\frac{\partial}{\partial #1}}
\newcommand{\xmapsto}[2][]{\ext@arrow 0599{\mapstofill@}{#1}{#2}}
\def\mapstofill@{\arrowfill@{\mapstochar\relbar}\relbar\rightarrow}
\theoremstyle{remark}
\theoremstyle{definition}
\newcommand{\fixme}[1]{{\color{blue}{\tt [#1]}}}
\begin{document}

\title{Symplectic neighborhood of crossing symplectic submanifolds}

\author{Roberta Guadagni}

\setcounter{page}{1}

\begin{abstract}
This paper presents a proof of the existence of standard symplectic coordinates near a set of smooth, orthogonally intersecting symplectic submanifolds. It is a generalization of the standard symplectic neighborhood theorem. Moreover, in the presence of a compact Lie group $G$ acting symplectically, the coordinates can be chosen to be $G$-equivariant.
\end{abstract}

\maketitle

\section*{Introduction}

The main result in this paper is a generalization of the symplectic tubular neighborhood theorem (and the existence of Darboux coordinates) to a set of symplectic submanifolds that intersect each other orthogonally.
This can help us understand singularities in symplectic submanifolds. Orthogonally intersecting symplectic submanifolds (or, more generally, positively intersecting symplectic submanifolds as described in the appendix) are the symplectic analogue of normal crossing divisors in algebraic geometry. Orthogonal intersecting submanifolds, as explained in this paper, have a standard symplectic neighborhood. Positively intersecting submanifolds, as explained in the appendix and in \cite{mtz_nc}, can be deformed to obtain the same type of standard symplectic neighborhood. 

The result has at least two applications to current research: it yields some intuition for the construction of generalized symplectic sums (see \cite{mtz}), and it describes the symplectic geometry of degenerating families of K\"ahler manifolds as needed for mirror symmetry (see \cite{toricdeg}). The application to toric degenerations is described in detail in the follow-up paper \cite{mymain}. 

While the proofs are somewhat technical, the result is a natural generalization of Weinstein's neighborhood theorem. Given a symplectic submanifold $X$ of $(M,\omega)$, there exists a tubular neighborhood embedding $\phi:NX\rightarrow M$ defined on a neighborhood of $X$. Moreover, an appropriate choice of connection $\alpha$ on $NX$ determines a closed 2-form $\omega_\alpha$, non-degenerate in a neighborhood of $X$ (see Section \ref{sectiononbundles}). Thanks to Weinstein's symplectic neighborhood theorem \cite{weinstein}, the tubular neighborhood embedding can be chosen to be symplectic.
The aim of the present work is to prove an analogous statement for a union of symplectic submanifolds $\{X_i\}_{i\in\mathcal I}$ of $(M,\omega)$. We require the intersection to be orthogonal, that is, $N(X_i\cap X_j)=NX_i\oplus^{\perp}NX_j$, for all $i,j$, at all intersection points (see Definition \ref{orthog}). 

\begin{rmk}
The generalization from one to multiple submanifolds is not straightforward, because there is no smooth retraction of a neighborhood of $\bigcup_{i\in\mathcal I} X_i$ onto $\bigcup_{i\in\mathcal I} X_i$ when $|\mathcal I|>1$. The goal of this paper is to overcome such difficulty.
\end{rmk}

The result can be summarized as follows (see Corollary \ref{maincor}, and see Theorem  \ref{equivsymplum} for the equivariance):

\begin{theorem}\label{thm1}
Let $\{X_i\}_{i\in\mathcal I}$ be a finite family of orthogonal symplectic submanifolds in $(M,\omega)$. Then there exist connections $\alpha_i$ on $NX_i$ and symplectic neighborhood embeddings $\phi_i:(NX_i,\omega_{\alpha_i})\rightarrow (M,\omega)$ which are pairwise compatible. 

In the presence of a group $G$, the embeddings can be chosen to be equivariant with respect to the linearized action.
\end{theorem}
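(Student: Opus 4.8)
The plan is to reduce, as in Weinstein's theorem, to a relative Moser argument, but to carry it out compatibly across the whole stratification induced by the intersections. First I would fix a $G$-invariant, $\omega$-compatible almost complex structure $J$ (obtained by averaging any compatible $J$ over the compact group $G$), so that $g(\cdot,\cdot)=\omega(\cdot,J\cdot)$ is a $G$-invariant Riemannian metric for which each $X_i$ is an almost complex, hence symplectic, submanifold, and for which the orthogonality hypothesis reads metrically as $N(X_i\cap X_j)=NX_i\oplus^{\perp}NX_j$. Using $g$, I would then choose connections $\alpha_i$ on $NX_i$ compatible with the complex structure and mutually consistent along intersections, so that the model forms $\omega_{\alpha_i}$ of Section~\ref{sectiononbundles} agree on the overlaps through the orthogonal splitting.

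Next I would produce the underlying smooth embeddings $\phi_i$ compatibly, working by induction on the poset of strata $X_S=\bigcap_{i\in S}X_i$ ordered by reverse inclusion. The failure of a global retraction onto $\bigcup_i X_i$ is precisely what forces this stratified approach: rather than retracting a single neighborhood, I would define the embeddings on the deepest strata first via the $g$-exponential map and extend outward. Orthogonality is what makes this coherent: along a stratum the metric splits, so the normal exponential maps in the distinct directions $NX_i$ can be arranged to commute, and the restrictions of $\phi_i$ and $\phi_j$ over $X_i\cap X_j$ then agree via the splitting. This yields smooth, pairwise compatible embeddings for which $\phi_i^*\omega$ and $\omega_{\alpha_i}$ are closed, nondegenerate near $X_i$, and agree to first order along $X_i$.

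The symplectic correction is then a Moser argument performed simultaneously on all the $NX_i$. On each $NX_i$ the relative Poincar\'e lemma writes $\phi_i^*\omega-\omega_{\alpha_i}=d\beta_i$ with $\beta_i$ vanishing to first order along $X_i$; the essential point is to choose the primitives $\beta_i$ compatibly, so that on each overlap $X_i\cap X_j$ they match under the orthogonal splitting. Setting $\omega_t=\omega_{\alpha_i}+t\,d\beta_i$ and solving $\iota_{V_t}\omega_t=-\beta_i$ produces a time-dependent vector field whose compatibility forces it to be tangent to every stratum; its flow is therefore an isotopy fixing $X_i$, preserving the entire configuration, and turning $\phi_i$ into a symplectomorphism onto its image, with pairwise compatibility preserved. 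I expect the main obstacle to be exactly this simultaneity: arranging the primitives $\beta_i$ (and the connection data) to be mutually consistent on all overlaps while still vanishing along each $X_i$, so that the resulting Moser flows are both tangent to the strata and mutually compatible. Finally, since $G$ is compact and all the chosen data ($J$, $g$, the $\alpha_i$, and, after averaging, the $\beta_i$) can be taken $G$-invariant, the Moser vector fields are $G$-invariant and their flows equivariant, so the $\phi_i$ are equivariant with respect to the linearized action.
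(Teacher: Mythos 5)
Your outline follows the paper's general architecture (model forms $\omega_{\alpha_i}$ from compatible connections, compatible smooth embeddings built stratum by stratum, a relative Moser correction, averaging for equivariance), but it punts on precisely the step that is the heart of the matter, and you say so yourself: ``I expect the main obstacle to be exactly this simultaneity: arranging the primitives $\beta_i$ \dots to be mutually consistent on all overlaps.'' That is not an expected obstacle to be smoothed over later; it is the central difficulty, and your proposed resolution (choose the $\beta_i$ compatibly and run simultaneous Moser flows) is exactly the strategy the paper warns cannot be carried out naively. The natural primitive $\beta_i$ comes from the radial retraction of $NX_i$ onto $X_i$; near $X_i\cap X_j$ the primitives $\beta_i$ and $\beta_j$ are built from two different retractions and there is no reason for them to agree, and the only natural way to force agreement would be to build a single primitive from a retraction onto $\bigcup_i X_i$ --- which does not exist smoothly when there is more than one submanifold (this is the paper's Remark 4 after Lemma \ref{lemma1}). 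The paper's solution is structurally different: it abandons simultaneity and corrects \emph{sequentially} (Lemma \ref{lemma1}), constructing at step $k$ a one-form $\sigma_k$ that is a primitive of the error only near $X_k$, vanishes along \emph{all} the $X_i$ (possible because the radial retraction onto $X_k$ is taken with respect to a metric, built in Corollary \ref{totallygeodesicmetric}, whose retraction lines preserve each $X_j$), and vanishes on a neighborhood of the already-corrected $X_1,\dots,X_{k-1}$; each Moser flow then fixes every $X_i$ and does not disturb previous steps, and the final diffeomorphism is the composition $f_1\circ\cdots\circ f_n$. Without this (or an actual construction of globally compatible primitives, which you do not give), your argument does not close.

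A secondary gap: you assert that the metric $g=\omega(\cdot,J\cdot)$ from an averaged compatible $J$ lets the normal exponential maps ``be arranged to commute'' so that the $\phi_i$ are automatically compatible over intersections. This is not automatic --- for a generic compatible metric the $X_i$ are not totally geodesic, and $\exp$ applied to $N_{X_j}X_{ij}$ need not stay inside $X_j$. The paper spends most of its length (the plumbing data of Definition \ref{plumdata}, the metric constructions of Lemmas \ref{bundlemetric}--\ref{plumbmetric}, and the rigid embedding of Proposition \ref{rigidembedding}, which also pins down $d_{X_i}F$ so that the forms agree along $X_i$) manufacturing exactly this compatibility, and it explicitly notes that the shortcut of patching locally constructed metrics fails because interpolation of metrics does not preserve geodesics. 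So both halves of your plan --- the compatible smooth embeddings and the compatible Moser correction --- require constructions that your proposal assumes rather than provides.
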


If we assume that the manifolds are of real codimension $2$ (e.g. algebraic divisors), compatibility can be expressed as follows: for all $i,j$, for $X_{ij}=X_i\cap X_j$, there is a symplectic neighborhood embedding $\phi_{ij}:(NX_{ij},\omega_{\alpha_i+\alpha_j})\rightarrow (M,\omega)$, which factors as $\phi_{ij}=\phi_i\circ \xi_{ij}^i=\phi_j\circ\xi_{ij}^j$, where $\xi_{ij}^i:NX_{ij}\cong (NX_i\oplus NX_j)|_{X_{ij}} \rightarrow NX_i$ is a morphism of symplectic vector bundles covering some symplectic neighborhood map $\varphi_{ij}^i: N_{X_i}X_{ij}\rightarrow X_i$
\footnote{Notice that $NX_{ij}=(NX_i\oplus NX_j)|_{X_{ij}}$ can be naturally thought of as a bundle over $NX_j|_{X_{ij}}$; also notice that $NX_j|_{X_{ij}}=N_{X_i}X_{ij}$ (by transversality) so that a neighborhood map for $X_{ij}$ inside $X_i$ is a map $\varphi_{ij}^i: NX_j|_{X_{ij}}\rightarrow X_i$; therefore, it makes sense to ask that $\xi_{ij}^i$ is a bundle map covering $\varphi_{ij}^i$.} (and similarly for $j$) while $\phi_i, \phi_j$ are symplectic tubular neighborhood maps.

\begin{figure}
\vspace{-.0in}
\includegraphics[width=3in]{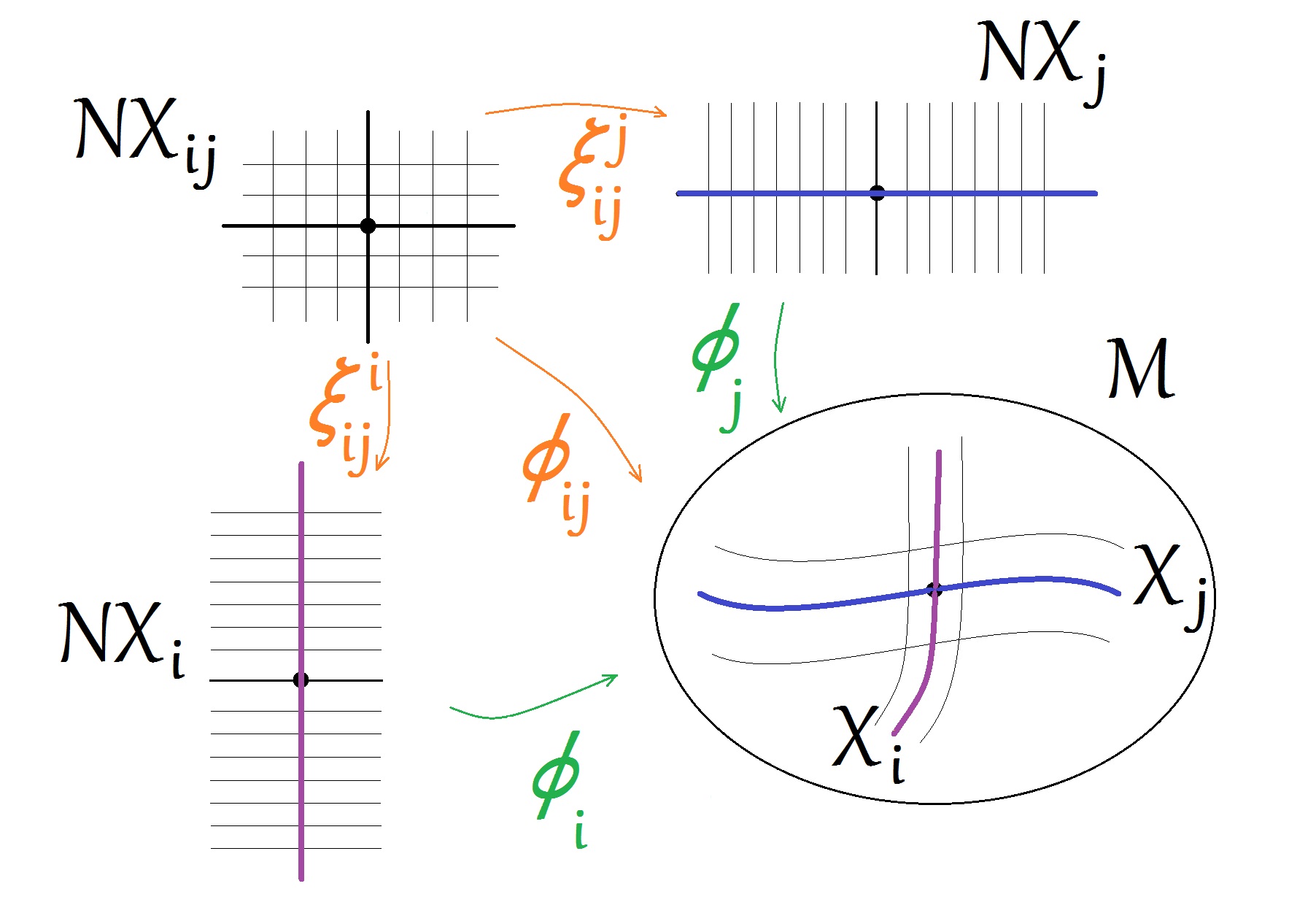}
\vspace{-.0in}
\caption{The compatibility of $\phi_i,\phi_j$ is given by the existence of $\phi_{ij},\xi_{ij}^i,\xi_{ij}^j$.
}\label{syzpic} \vspace{-.1in}
\end{figure}

The result (and the more general compatibility conditions) can be expressed in terms of the auxiliary construction of a symplectic plumbing. A plumbing is an appropriate gluing of the normal bundles $NX_i$ for all $i$, and if we keep track of some extra data we can make sure it inherits a symplectic form. The content of Theorem \ref{main} and Theorem \ref{equivsymplum} can be summarized as follows:
\begin{theorem}[reformulation of Theorem \ref{thm1}]
Given a family of orthogonally intersecting submanifolds $\{X_i\}_{i\in\mathcal I}$ in $(M,\omega)$, there exists a symplectic space given by a union of all the normal bundles, called plumbing. The union $\bigcup_i X_i$ admits a neighborhood in $M$ which is symplectomorphic to a neighborhood of the zero section in the plumbing.

In the presence of a $G$-action, the plumbing inherits a linearized $G$-action and the symplectomorphism can be chosen to be equivariant.
\end{theorem}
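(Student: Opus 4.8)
The plan is to assemble the plumbing as an abstract symplectic model out of the normal-bundle data, build a diffeomorphism from a neighborhood of its zero section onto a neighborhood of $\bigcup_i X_i$ in $M$, and then correct it to a symplectomorphism by a Moser argument, with equivariance obtained by averaging all auxiliary choices over the compact group $G$. First I would set up the bundle data: $\omega$ restricts to a fiberwise symplectic structure on each $NX_i$, and a choice of compatible Hermitian structure together with a unitary connection $\alpha_i$ yields the closed form $\omega_{\alpha_i}$ of Section \ref{sectiononbundles}, nondegenerate near the zero section. Over each $X_{ij}=X_i\cap X_j$ the orthogonality hypothesis $N(X_i\cap X_j)=NX_i\oplus^{\perp}NX_j$ supplies a canonical $\omega$-orthogonal splitting, identifying $NX_i|_{X_{ij}}$ with the normal directions of $X_{ij}$ inside $X_j$. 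Using these identifications as gluing data, I would define the plumbing $P$ by gluing neighborhoods of the zero sections of the $NX_i$ over the (possibly multiple) intersections, and verify that the forms $\omega_{\alpha_i}$ agree on overlaps, so that they descend to a single closed, fiberwise-nondegenerate form $\omega_P$ on $P$.

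Next I would construct a diffeomorphism $\Phi$ from a neighborhood of the zero section of $P$ onto a neighborhood of $\bigcup_i X_i$. Since, as the Remark notes, there is no global retraction onto $\bigcup_i X_i$, I would proceed inductively on the depth of the intersection stratification: fix the embedding first on the deepest strata (the common intersections of the most submanifolds), then extend outward one codimension at a time. At each stage I would take the single-submanifold Weinstein embedding $\phi_i$ but \emph{rectify} it over the already-constructed piece so that it restricts to the map fixed on the lower stratum; the orthogonal splitting guarantees that the fiber directions added at each step are $\omega$-complementary to those already used, so the partial embeddings stay immersions and glue to a genuine diffeomorphism $\Phi$. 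By construction $\Phi$ carries the zero section to $\bigcup_i X_i$ and matches $\omega_P$ with $\Phi^*\omega$ to first order along it.

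Finally I would run Moser's trick on the family $\omega_t=(1-t)\,\omega_P+t\,\Phi^*\omega$. The two forms agree along the zero section, so each $\omega_t$ is symplectic on a neighborhood, and $\tfrac{d}{dt}\omega_t=\Phi^*\omega-\omega_P$ admits a primitive vanishing on the zero section; solving $\iota_{V_t}\omega_t=-(\Phi^*\omega-\omega_P)$ and integrating the time-dependent field $V_t$ produces an isotopy fixing $\bigcup_i X_i$ that turns $\Phi$ into a symplectomorphism. For the equivariant version, compactness of $G$ lets me average the metrics, Hermitian structures, connections $\alpha_i$, cutoff functions, and the Moser primitive, making $\omega_P$, $\Phi$, and $V_t$ all $G$-invariant; the flow is then $G$-equivariant and the plumbing inherits the linearized action.

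I expect the genuine difficulty to lie in the inductive, stratum-by-stratum construction of $\Phi$: arranging that the Weinstein maps for the individual $X_i$ can be simultaneously rectified to agree on all overlaps — not merely pairwise but on higher multiple intersections — while keeping the orthogonal splittings mutually consistent is where the real work is. Once the first-order matching along the zero section is secured, the Moser step and the averaging for equivariance are comparatively standard.
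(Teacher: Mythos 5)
Your overall architecture parallels the paper's: build the plumbing with compatible connections so that the forms $\omega_{\alpha_i}$ descend to a global form, construct a smooth embedding matching the forms to first order along the union, finish with a Moser-type correction, and obtain equivariance by averaging. The genuine gap is in your final step. You assert that $\Phi^*\omega-\omega_P$ ``admits a primitive vanishing on the zero section'' and that the Moser step is ``comparatively standard.'' It is not, and this is precisely the second core difficulty the paper addresses (the first being the construction of $\Phi$, which you do flag). For a single submanifold one produces such a primitive via the homotopy formula applied to the fiberwise radial retraction of $NX$ onto $X$; for a union $\bigcup_i X_i$ there is no smooth retraction onto the union --- near $X_{ij}$ the fiberwise scaling of $NX_i$ and the fiberwise scaling of $NX_j$ do not agree --- so no one-shot homotopy-formula primitive vanishing along all of $\bigcup_i X_i$ is available. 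The Remark following Lemma \ref{lemma1} warns explicitly that running Moser's argument ``only once'' fails for exactly this reason.

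What the paper does instead (Lemma \ref{lemma1}) is an induction over the submanifolds: at stage $k$ it builds a one-form $\sigma_k$ supported near $X_k$, vanishing along \emph{all} the $X_i$ and on a neighborhood of $X_1\cup\dots\cup X_{k-1}$, using only the radial scaling $\phi_t$ of a tubular neighborhood of $X_k$; crucially, the metric defining that tubular neighborhood must be chosen (Corollary \ref{totallygeodesicmetric}) so that $\phi_t$ preserves every other $X_j$, since otherwise $\sigma_k$ would not vanish along $X_j$ and the resulting flow would move it. The diffeomorphisms $f_k$ generated by the $\sigma_k$ are then composed, $\phi=f_1\circ\cdots\circ f_n$, and it is this composition that turns $\Phi$ into a symplectomorphism fixing $\bigcup_i X_i$. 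Your proposal needs this inductive replacement of the single Moser step (or an equivalent device) to close; once it is inserted, the rest of your outline --- including averaging to make the inductively constructed data equivariant --- matches the paper's proof.
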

This implies Theorem \ref{thm1}. 
\vspace{.1in}

\textbf{Outline of proof.}
For symplectic submanifolds $\{X_i\}_{i\in \mathcal I}$, $X_i\subset (M,\omega)$, the symplectic form $\omega$ on a neighborhood of $\bigcup_i X_i$ only depends on the restriction of $\omega$ to ${TM|_{ \bigcup_i X_i}}$. This is the content of Lemma \ref{lemma1}. It is a generalization of the case of one divisor, and it is done by induction. 

The rest of the paper gives an explicit construction of a local model for such a symplectic form. This is done in Theorem \ref{main}.
The strategy is to first construct an appropriate space containing $\bigcup_i X_i$, which is diffeomorphic to a neighborhood of $\bigcup_i X_i$ in $M$. This can be done by constructing a plumbing, that is, an appropriate glueing of the normal bundles $NX_i$ for all $i$. 
Unfortunately, the construction doesn't run as smoothly when introducing symplectic structures. Therefore, we give a definition of plumbing which is more rigid than usual and requires some extra data (see Definition \ref{plumdata}). This set of rigid data is needed in order to use the result in symplectic settings. The fact that such data always exist is proved in Theorem \ref{plumbingsexist}.
We then show that if a plumbing is built symplectically (as in Definition \ref{symplumdata}), then it admits a symplectic form, which only depends on the symplectic bundle structure of each $NX_i$, together with a choice of compatible connections for each $NX_i$. This is the content of Lemma \ref{sumofbundles}. The fact that one can always find connections which have the right compatibility is proved in Lemma \ref{nconnections}.

The main theorem (Theorem \ref{main}) states that a symplectic plumbing can be constructed for any choice of orthogonal symplectic submanifolds, and it can always be symplectically embedded as a neighborhood of such submanifolds in $M$. 
The proof follows the outline of the proof of the symplectic neighborhood theorem: first, construct a smooth embedding of the plumbing (see Proposition \ref{plumbingsexist}), whose derivative is the identity along each $X_i$; then, apply Lemma \ref{lemma1} to make it a symplectomorphism. The case of multiple submanifolds requires more care than the classical case, due to the compatibility requirements (the main problem is: there isn't a smooth retraction onto $\bigcup_i X_i$; so we need to use separate retractions onto $X_i$ for each $i$, and insure compatibility at every step).

The last section of the paper extends the results in the presence of a compact Lie group acting symplectically. 
When such $G$ acts on $(M, \omega)$ and preserves each submanifold $X_i$, we construct a $G$-action on the plumbing (given by linearizing the action on each $NX_i$). This is done in Lemma \ref{equivplum}. Finally, in Theorem \ref{equivsymplum}, we show that, if $G$ acts symplectically on $M$, the right choice of connections makes the plumbing a symplectic $G$-space, and the symplectic embedding of the plumbing into $M$ can be chosen to be equivariant.

\textbf{Acknowledgments.} Many thanks to Mark McLean and Mohammad Tehrani for helpful conversation, and to Tim Perutz for his advice and  patience. This work was partly supported by the NSF through grants DMS-1406418 and CAREER-1455265.

\section{Symplectic forms on neighborhoods of crossing symplectic submanifolds}

Let $\omega_1, \omega_2$ be symplectic forms on $M$.
Let $\{X_i\}_{i\in I}$ be a collection of symplectic submanifolds of both $(M,\omega_1)$ and $(M,\omega_2)$. Throughout the paper we'll assume that all intersections are transverse.
What follows is a version of Moser's argument for symplectic forms agreeing on transversal symplectic submanifolds.

\begin{defn}\label{agreealong}
Two forms $\alpha,\beta$ on $M$ are said to \textbf{agree along} a subset $S\subseteq M$ whenever $\alpha_x=\beta_x$ on $T_xM$ for all $x\in S$.
\end{defn}

\begin{lemma}\label{lemma1}
Assume that $\omega_1$, $\omega_2$ are symplectic forms on $M$ that agree along $\bigcup_i X_i$. Then there exist open neighborhoods $\mathcal{U}, \mathcal{V}$ of $\bigcup_i X_i$ and a diffeomorphism $\phi:\mathcal{U}\rightarrow \mathcal{V}$ such that $$\phi|_{\bigcup_i X_i}=id \;\ \ \ \ \ \ \  \phi^*\omega_2=\omega_1.$$
\end{lemma}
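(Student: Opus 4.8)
The plan is to run Moser's homotopy argument relative to $S := \bigcup_i X_i$. Since $\omega_1$ and $\omega_2$ agree along $S$, the linear interpolation $\omega_t := (1-t)\omega_1 + t\omega_2 = \omega_1 + t\sigma$, where $\sigma := \omega_2 - \omega_1$, satisfies $\omega_t|_x = \omega_1|_x$ for every $x \in S$ and every $t$; in particular each $\omega_t$ is nondegenerate at the points of $S$. Nondegeneracy is an open condition, so there is a neighborhood of $S$ on which $\omega_t$ is symplectic for all $t \in [0,1]$ simultaneously. The form $\sigma$ is closed and vanishes along $S$ (as a $2$-covector at each $x \in S$). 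Granting a $1$-form $\mu$, defined near $S$, with $d\mu = \sigma$ and $\mu_x = 0$ for all $x \in S$, I would define the time-dependent vector field $X_t$ by $\iota_{X_t}\omega_t = -\mu$. Because $\mu$ vanishes along $S$ and $\omega_t$ is nondegenerate there, $X_t$ vanishes along $S$, so its flow $\phi_t$ fixes $S$ pointwise and is defined for all $t \in [0,1]$ on a (possibly smaller) neighborhood $\mathcal{U}$ of $S$. The Cartan computation $\frac{d}{dt}\phi_t^*\omega_t = \phi_t^*(d\iota_{X_t}\omega_t + \sigma) = \phi_t^*(-d\mu + \sigma) = 0$ then gives $\phi_1^*\omega_2 = \omega_1$, and $\phi := \phi_1$ restricts to the identity on $S$, as required.

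Everything thus reduces to the \emph{relative Poincaré lemma} for the union $S$: producing $\mu$ with $d\mu = \sigma$ vanishing as a covector along $S$. For a single submanifold $X_1$ this is classical: fix a tubular neighborhood $\nu_1 \cong NX_1$ with fiberwise scaling $r_s(v) = sv$ ($s \in [0,1]$), so that $r_1 = \mathrm{id}$, $r_0 = \pi_1$ is the projection onto $X_1$, and let $V_s$ be the radial generating vector field. Since $d\sigma = 0$, the homotopy formula gives $\sigma = \pi_1^*\sigma + d h_1\sigma$ with $h_1\sigma := \int_0^1 r_s^*(\iota_{V_s}\sigma)\,ds$. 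As $\sigma$ vanishes along $X_1$ and $\pi_1$ maps into $X_1$, the term $\pi_1^*\sigma$ vanishes identically; and because $V_s$ vanishes on the zero section, $h_1\sigma$ vanishes along $X_1$ as a covector. So $\mu = h_1\sigma$ works.

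For several submanifolds I would iterate this, which forces the tubular neighborhoods to be chosen \emph{compatibly}: using the transversality (orthogonality) of the crossings, pick for each $i$ a retraction $r^i_s$ onto $X_i$ whose projection $\pi_i$ preserves every other $X_j$ and restricts on $X_j$ to a retraction onto $X_i \cap X_j$. Applying the operators in turn, set $\sigma^{(0)} := \sigma$, $\mu^{(k)} := h_k\sigma^{(k-1)}$ and $\sigma^{(k)} := \pi_k^*\sigma^{(k-1)}$, so that $\sigma^{(k-1)} = \sigma^{(k)} + d\mu^{(k)}$ telescopes to $\sigma = \sigma^{(n)} + d(\mu^{(1)} + \cdots + \mu^{(n)})$. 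The compatibility forces each composition $\pi_k \cdots \pi_1$ to land in the intersection $X_1 \cap \cdots \cap X_k \subseteq S$, where $\sigma$ vanishes; hence $\sigma^{(k)}$ vanishes identically (so $\sigma^{(n)} = 0$ once every $X_i$ through a given point has been used), and each $\mu^{(k)}$ vanishes along \emph{every} $X_j$ — along $X_k$ by the radial argument, and along $X_j$ with $j \neq k$ because $r^k_s$ keeps $X_j$ inside $S$, where the integrand $\iota_{V^k_s}\sigma^{(k-1)}$ is zero. Thus $\mu := \sum_k \mu^{(k)}$ is a primitive of $\sigma$ vanishing along all of $S$.

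The main obstacle is exactly this simultaneous vanishing, and it is why a single retraction (which does not exist onto $S$) cannot be used: one must build tubular neighborhoods of the $X_i$ that are mutually compatible — the projections preserving one another's submanifolds and behaving like products near the multiple intersections — so that the iterated homotopy operators do not spoil the vanishing achieved at earlier stages. I expect the bookkeeping of these compatible neighborhoods, together with the localization needed where not every $X_i$ passes through a given point (which can be arranged stratum by stratum or with a partition of unity), to be the technical heart; the Moser step itself is then routine.
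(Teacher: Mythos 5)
Your key ingredients are exactly the paper's: Moser relative to $S=\bigcup_i X_i$, a relative Poincar\'e lemma via the homotopy operator of a tubular retraction, and---crucially---retractions chosen \emph{compatibly}, so that the retraction onto $X_k$ preserves every other $X_j$ (the paper obtains these from a metric making the whole configuration totally geodesic, Corollary \ref{totallygeodesicmetric}, and your observation that $h_k$ then produces forms vanishing along all of $S$ is the same as the paper's). The genuine gap is the passage from these \emph{local} primitives to a primitive on a neighborhood of all of $S$, and your telescoping scheme does not accomplish it. Indeed the iteration collapses after one step: since $\sigma$ vanishes along $X_1$ as a covector and $\pi_1$ maps into $X_1$, you have $\sigma^{(1)}=\pi_1^*\sigma\equiv 0$ wherever it is defined (you note this yourself in the one-submanifold case), hence $\mu^{(k)}=h_k\sigma^{(k-1)}=0$ for every $k\ge 2$. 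The telescoped identity therefore reduces to $\sigma=d\mu^{(1)}$, which holds only on the tube $\mathcal N_1$ around $X_1$; nothing is ever constructed near the parts of $X_2,\dots,X_n$ away from $X_1$, and the "composition landing in $X_1\cap\dots\cap X_k$" does no work (that intersection may even be empty). Moreover the fallback you propose cannot work as stated: a partition of unity does not glue primitives, because $d\bigl(\sum_k\chi_k\mu_k\bigr)=\sum_k\chi_k\,d\mu_k+\sum_k d\chi_k\wedge\mu_k\neq\sigma$.

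This local-to-global step is precisely where the paper's induction earns its keep. At stage $k$ the paper builds the local primitive $\tilde\sigma_k=h_k(\omega^{(k-1)}-\omega_1)$ on the tube around $X_k$ (using the compatible retraction, so $\tilde\sigma_k$ vanishes along every $X_i$), cuts it off to a global $1$-form $\sigma_k$ vanishing outside the tube and near $\bigcup_{i<k}X_i$, and runs a \emph{separate, localized} Moser flow $f_k$; the cutoff error is absorbed into the next error form $\omega^{(k)}=f_k^*\omega^{(k-1)}$, which still agrees with $\omega_1$ along every $X_i$ (this uses that $d\tilde\sigma_k=\omega^{(k-1)}-\omega_1$ on the whole tube, so $d\sigma_k$---not merely $\sigma_k$---vanishes along each $X_i$) and now agrees identically near $\bigcup_{i\le k}X_i$. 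If you prefer your single-Moser framing, the same interleaving can be done at the level of $1$-forms: set $\sigma_0=\sigma$ and $\sigma_k=\sigma_{k-1}-d\bigl(\chi_k\,h_k\sigma_{k-1}\bigr)$, and check inductively that $\sigma_k$ is closed, vanishes along all $X_i$ as a covector, and vanishes identically near $X_1\cup\dots\cup X_k$; then $\mu=\sum_k\chi_k\,h_k\sigma_{k-1}$ is a primitive of $\sigma$ near $S$ vanishing along $S$, and one Moser flow finishes. Either way, this per-stratum correction is the missing idea; your "stratum by stratum" remark gestures at it, but the telescoping as written does not implement it.
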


\begin{proof}

We will approach the proof by induction: for all $0\leq k\leq n$ we'll find: 

\begin{itemize}
\item a 2-form $\omega^{(k)}$ such that $\omega^{(k)}=\omega_1=\omega_2$ along all $X_i$'s, $\omega^{(k)}=\omega_1$ in a neighborhood of $\bigcup_{i=1}^k X_i$;
\item $f_k$ such that $f_k^*(\omega^{(k-1)})=\omega^{(k)}$, $f_k=id$ on $X_i$ for all $i$, and $f_k=id$ on a neighborhood of $\bigcup_{i=1}^{k-1}X_i$.
\end{itemize}

The starting point will be $\omega^{(0)}=\omega_2$, and at the end $\phi=f_1\circ\ldots\circ f_n$ will satisfy $\phi^*\omega_2=\omega^{(n)}=\omega_1$.

Assume, for $k>0$, that $\omega^{(k-1)}$ has been built.
To construct $f_k$, we'll find a 1-form $\sigma_k$ such that
$d\sigma_k=\omega^{(k-1)}-\omega_1$ on a neighborhood of $X_k$, $\sigma_k=0$ along all $X_i$'s and $\sigma_k=0$ on a neighborhood of $\bigcup_{i=1}^{k-1} X_k$;
Moser's theorem applied to $\sigma_k$ yields $f_k:\mathcal U_k\rightarrow \mathcal V_k$ and $\omega^{(k)}=f_k^*(\omega^{(k-1)})$ as desired.
\vspace{.3cm}

To construct $\sigma_k $, consider the exponential map with respect to a metric $g$, and let $exp_k:N_MX_k\cong T_{X_k}M^{\perp_g}\rightarrow M$ be the restriction of this map to the normal bundle of $X_k$ (Really, we are only considering this map for vectors of length at most $\delta$ so that $exp_k$ is a diffeomorphism. The slight abuse of notation reflects an effort to keep the notation from getting too heavy). Let $\mathcal N_k$ be the image of such map, i.e. a tubular neighborhood of $X_k$. We make sure to pick $g$ such that $exp_k$ restricts to a map $exp_k|_{N_{X_j}(X_k\cap X_j)}: {N_{X_j}(X_k\cap X_j)}\rightarrow X_j$ for all $j\neq k$ (we'll construct such a metric later in this paper; see Corollary \ref{totallygeodesicmetric}).
Now define $\phi_t:\mathcal N_k\rightarrow \mathcal N_k$, the map sending $exp_k(q,v)\mapsto exp_k(q,tv)$. In particular, for all $j\neq k$, $\phi_t$ restricts to a map from $X_j\cap \mathcal N_k$ to itself.

(Notation reminder: when $(q,v)\in T^{vert}_q(M)$,  $m\in M$,  $exp(q,v)\in M$, then $\omega(m;v,w)$ denotes $\omega$ at the point $m$ evaluated on $v,w\in T_m(M)$; similarly for a one form $\sigma(m;v)$ denotes $\sigma$ at the point $m$ evaluated on $v$.)

Now we want to consider the family of one-forms on $M$:
$$\sigma_t(m;v)=(\omega^{(k-1)}-\omega_1)(\phi_t(m); \frac{d}{dt} \phi_t(m), d\phi_t(m)v)$$
and integrate it to get $$\tilde\sigma_k(m)=\int_0^1\sigma_t(m)$$
If $m\in X_j$, then $\phi_t(m)\in X_j$ (due to the choice of metric). Because $(\omega^{(k-1)}-\omega_1)$ vanishes on $X_j$, $\sigma_t(m;v)=0$ for all $v$. Therefore $\tilde\sigma_k(m)=\int_0^1\sigma_t(m)=0$. Now we can extend $\tilde\sigma_k$ to a full neighborhood of $\bigcup_i X_i$ by letting $B_{\epsilon}(X_k)\subset\mathcal N$ be a smaller neighborhood of $X_k$, and define
\[
 \sigma_k =
  \begin{cases} 
      \hfill \tilde\sigma_k    \hfill & \text{ if } x\in B_{\epsilon}(X_k) \\
      \hfill 0 \hfill & \text{ if } x\notin \mathcal N\\
      \hfill \text{} \hfill & \text{smoothly interpolates on } \mathcal N\backslash B_{\epsilon}\\
  \end{cases}
\]
Such $\sigma_k$ is as required and yields a diffeomorphism $f_k:\mathcal U_k\rightarrow \mathcal V_k$. Also notice that $f_k=id$ whenever $\sigma_k=0$, in particular on $\bigcup_i X_i$ and on a neighborhood of $\bigcup_{i=1}^{k-1} X_i$. Let then $\omega^{(k)}=f_k^*\omega^{(k-1)}$: by construction, $\omega^{(k)}=\omega_1=\omega_2$ along all $X_i$'s, $\omega^{(k)}=\omega_1$ in a neighborhood of $\bigcup_{i=1}^k X_i$.

\end{proof}

\begin{rmk}
1. The transversality condition can be somewhat relaxed and the proof of the lemma still works in some cases, think e.g. two lower dimensional submanifolds that are not tangent at the intersection points. Weaker formulations can be found if needed.

2. This lemma doesn't need orthogonality for the intersection of the $X_i$'s.

3. The lemma can be easily modified to include self-crossing submanifolds.

4. It is tempting to avoid induction and look for a proof by first constructing a retraction of a neighborhood of $\bigcup_i X_i$ onto $\bigcup_i X_i$, then using Moser's argument only once on the resulting vector field; however, this argument wouldn't work because such a retraction is not smooth; hence the need for induction.
\end{rmk}

In order to use Lemma \ref{lemma1} to prove the existence of a nice neighborhood of intersecting submanifolds $\{X_i\}_{i\in\mathcal I}$, we need to find a multifold analogue of the following classical theorem, which is used in the classic result of Weinstein \cite{weinstein} (for a detailed discussion of Weinstein's result cfr. \cite{mcdsal}):
\begin{thm*}[Tubular neighborhood theorem]\label{snt}
There exists a map from $NX$ to $M$ such that $f|_X=id$ and $df|_X=id$.
\end{thm*}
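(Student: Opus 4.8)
The plan is to construct $f$ via the exponential map of an auxiliary Riemannian metric, exactly as in the classical argument underlying Weinstein's theorem \cite{weinstein}. First I would fix a Riemannian metric $g$ on $M$. This furnishes an orthogonal splitting $TM|_X = TX \oplus (TX)^{\perp_g}$, which lets me identify the abstract normal bundle $NX = TM|_X/TX$ with the concrete subbundle $(TX)^{\perp_g}\subset TM|_X$. This is the same identification already used in the proof of Lemma~\ref{lemma1}, where $N_M X_k \cong T_{X_k}M^{\perp_g}$. I would then define $f=\exp|_{NX}:NX\rightarrow M$ by $f(x,v)=\exp_x(v)$, where $\exp$ is the Riemannian exponential; this is defined at least on vectors of length less than some $\delta$, so, as in Lemma~\ref{lemma1}, I write $NX$ for this disk subbundle by a slight abuse of notation. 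Verifying $f|_X=\mathrm{id}$ is then immediate, since a point of $X$ corresponds to the zero vector $(x,0)$ and $\exp_x(0)=x$.

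The real content is the computation of $df$ along the zero section. Along $X$ the tangent bundle of the total space splits canonically as $T_{(x,0)}(NX)=T_xX\oplus N_xX$, the sum of the horizontal (base) directions and the vertical (fiber) directions, while under the metric splitting the target is $T_xM=T_xX\oplus N_xX$. On the horizontal summand, $f$ restricted to the zero section is the inclusion $X\hookrightarrow M$, so $df$ is the identity $T_xX\rightarrow T_xX$. On the vertical summand, $df$ is the fiber derivative of $v\mapsto\exp_x(v)$ at $v=0$, which is the standard fact $d(\exp_x)_0=\mathrm{id}_{T_xM}$, here restricted to $N_xX$. Hence $df|_X=\mathrm{id}$ under these identifications.

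The point requiring care — rather than a genuine obstacle — is that ``$df|_X=\mathrm{id}$'' is a statement relative to the canonical identification $T(NX)|_X\cong TM|_X$, so I must make that identification explicit (the horizontal/vertical decomposition on the source, the metric decomposition on the target) before the derivative computation can even be phrased. Once $df$ is the identity along $X$, the inverse function theorem shows $f$ is a local diffeomorphism near the zero section, and after shrinking $\delta$ it embeds a disk subbundle onto an open tubular neighborhood of $X$; however, the theorem as stated only requires the map together with the two derivative conditions, which the exponential construction delivers directly.

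Worth noting for the sequel is that the same metric $g$ will later be constrained (Corollary~\ref{totallygeodesicmetric}) so that $\exp$ restricts correctly to the normal directions inside each $X_j$; the present statement, being about a single submanifold, needs no such compatibility, but the exponential-map construction is precisely the one that will generalize once those constraints are imposed.
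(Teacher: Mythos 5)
Your proof is correct and takes essentially the same route as the paper, which justifies this classical statement in one line: the map is the exponential map of a metric applied to a complement of $TX$ identified with $NX$, and your careful verification of $f|_X=\mathrm{id}$ and $df|_X=\mathrm{id}$ fleshes out exactly that construction. The only inessential difference is that the paper mentions the identification of $NX$ with the symplectic orthogonal complement $T^{\perp_\omega}_X M$, while you use the metric orthogonal complement $(TX)^{\perp_g}$ (matching the identification used in the proof of Lemma \ref{lemma1}); either choice works.
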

Classically, such a map can be obtained as the exponential map of any metric, applied to $T^{\perp_\omega}_XM$ (which is naturally isomorphic to $NX$). This can be much harder to prove (or even state) for $X$ not smooth.

\section{Normal bundles and connections}\label{sectiononbundles}

This section is a review of some basic facts for complex and symplectic vector bundles.

\textbf{Remarks on notation.} For submanifolds $Z\subset X\subset M$, $N_MX$ will indicate the normal bundle to $X$ inside $M$ (sometimes only $NX$ when there is no ambiguity), so that $N_XZ$ is the normal bundle to $Z$ inside $X$, while $NX|_Z$ indicates the restriction to $Z$ of the normal bundle to $X$ inside $M$.

The normal bundle to a symplectic submanifold is naturally a symplectic bundle.
Recall that one can always choose a compatible complex structure on a symplectic bundle: the choice is unique up to isotopy. Moreover, two bundles are isomorphic as symplectic bundles if and only if they are isomorphic as complex bundles (see e.g. \cite{mcdsal}). Therefore we will chose complex structures and consider the normal bundles as complex bundles.

There will be some abuse of notation with respect to connections on normal bundles: given $Y\subset X$ a symplectic submanifold of real codimension $2$, one can obtain a model for the complex normal bundle by letting $NY=U\times_{U(1)}\C$ where $U$ is a principal $U(1)$-bundle and $\alpha$ is a $U(1)$-connection on $U$. Whenever the paper mentions a connection $\alpha$ on $NX$, it can be interchangeably a connection 1-form on a $U(1)$-bundle (for which $NX$ is the associated bundle) or the corresponding Ehresmann connection on $NX$. 
\begin{rmk}
Choosing a $U(1)$-connection $\alpha$ gives the same data as a unitary Ehresmann connection on $NX$, whence the abuse of notation.
\end{rmk}

One can then build a two-form on $U\times \C$:
$$\omega_\alpha=\omega_Y+d(\rho \alpha)+\omega_{\C}$$
where $\rho$ indicates the hamiltonian function generating the standard circle action on $\C$.  Such two-form descends to $U\times_{U(1)}\C$. This is a general fact:

\begin{lemma}
Let $(F,\omega_F)$ be a symplectic manifold, $G\circlearrowright F$ a Hamiltonian action with moment map $\mu$, $P$ a principal $G$-bundle over a symplectic base $(Y, \omega_Y)$, $X=P\times_G F$ the associated bundle, $\alpha$ a principal connection on $P$. Then the 2-form $\omega_F+d\langle\alpha,\mu\rangle$ on $P\times F$ descends to a $2-form$ on $X$. Therefore so does $\Omega=\omega_Y+d\langle\alpha,\mu\rangle+\omega_F$ (here $\omega_Y$ is short for $\pi^*(\omega_Y)$).
\end{lemma}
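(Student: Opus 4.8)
The plan is to verify that $\beta := \omega_F + d\langle\alpha,\mu\rangle$ is a \emph{basic} form for the diagonal $G$-action on $P\times F$, and then invoke the standard fact that a differential form on the total space of a free quotient descends to (is the pullback of a form on) the quotient precisely when it is basic, i.e.\ both $G$-invariant and horizontal. Since $G$ acts freely on $P$ as a principal bundle, it acts freely on $P\times F$, so $X=P\times_G F$ is a manifold and this criterion applies. Writing $\xi_{P\times F}=(\xi_P,\xi_F)$ for the fundamental vector field of $\xi\in\mathfrak g$, where $\xi_P,\xi_F$ are the generators on the two factors, horizontality means $\iota_{\xi_{P\times F}}\beta=0$ for all $\xi$, and invariance means $L_{\xi_{P\times F}}\beta=0$.

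Invariance is the easy half. The form $\omega_F$ is $G$-invariant because a Hamiltonian action is in particular symplectic, and it is pulled back from $F$. For the coupling $1$-form $\langle\alpha,\mu\rangle$ (the pairing of the $\mathfrak g$-valued connection with the $\mathfrak g^*$-valued moment map), I would use that a principal connection transforms under the adjoint representation while an equivariant moment map transforms under the coadjoint representation; these are dual, so the pairing $\langle\alpha,\mu\rangle$ is $G$-invariant. Because exterior differentiation commutes with pullback, $d\langle\alpha,\mu\rangle$ is invariant as well, and hence so is $\beta$.

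Horizontality is the crux, and it is exactly where the moment-map hypothesis is used. First, $\iota_{\xi_{P\times F}}\langle\alpha,\mu\rangle=\langle\alpha(\xi_P),\mu\rangle=\langle\xi,\mu\rangle=\mu^\xi$, using the defining property $\alpha(\xi_P)=\xi$ of a principal connection. Applying Cartan's formula to the invariant $1$-form $\langle\alpha,\mu\rangle$ gives $\iota_{\xi_{P\times F}}\,d\langle\alpha,\mu\rangle = L_{\xi_{P\times F}}\langle\alpha,\mu\rangle - d\,\iota_{\xi_{P\times F}}\langle\alpha,\mu\rangle = -\,d\mu^\xi$. On the other hand $\iota_{\xi_{P\times F}}\omega_F=\iota_{\xi_F}\omega_F=d\mu^\xi$ by the moment-map equation. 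The two terms cancel, so $\iota_{\xi_{P\times F}}\beta=0$; the sign of the coupling term $d\langle\alpha,\mu\rangle$ and the moment-map convention are chosen precisely so that this cancellation occurs. Combined with invariance, $\beta$ is basic and therefore descends to a $2$-form on $X$.

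Finally, for $\Omega=\pi^*\omega_Y+d\langle\alpha,\mu\rangle+\omega_F$, I only need to observe that $\pi^*\omega_Y$ is itself basic: it is the pullback to $P\times F$ of a form on the base $Y=P/G$, hence manifestly $G$-invariant and killed by any vertical vector $\xi_{P\times F}$. Adding a basic form to the basic form $\beta$ keeps it basic, so $\Omega$ descends as well. The only points requiring care are bookkeeping ones — fixing compatible sign conventions for $\mu$ and for $\langle\alpha,\mu\rangle$, and confirming the equivariance of $\mu$ and the adjoint-transformation of $\alpha$ that drive the invariance step — rather than any genuine analytic difficulty.
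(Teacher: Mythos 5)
Your proof is correct and takes essentially the same approach as the paper's: both verify that $\omega_F+d\langle\alpha,\mu\rangle$ is basic for the diagonal $G$-action, with the same key cancellation $\iota_{v_\xi}\,d\langle\alpha,\mu\rangle=-d\mu^\xi=-\iota_{\xi_F}\omega_F$ coming from the connection property $\alpha(\xi_P)=\xi$ together with the moment-map equation. If anything, your invariance step is stated more carefully than the paper's: the paper asserts $\mathcal{L}_{v_\xi}\alpha=0$, which is literally true only for abelian $G$, whereas what is actually needed --- and what you prove via the adjoint/coadjoint duality and equivariance of $\mu$ --- is invariance of the pairing $\langle\alpha,\mu\rangle$.
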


\begin{proof}
Let $v_\xi$ be the vector field on $P\times F$ corresponding to $\xi\in\mathfrak{g}$. Recall that a form $\eta$ on $P\times F$ is the pullback of a form on $X$ if and only if $\iota_{v_\xi}\eta=\mathcal L_{v_\xi}\eta=0$, which by Cartan's formula  is equivalent to $\iota_{v_\xi}\eta=\iota_{v_\xi}(d\eta)=0$. Since $\omega_F+d\langle\alpha,\mu\rangle$ is closed we will just check that, for every  $\xi\in\mathfrak{g}$,
$\iota_{v_\xi}(\omega_F+d\langle\alpha,\mu\rangle)=0$.

Since $\mathcal{L}_{v_\xi}\alpha=0$, also $\mathcal{L}_{v_\xi}(d\langle\alpha,\mu\rangle)=0$ therefore
$$ \iota_{v_\xi}\circ d\langle\alpha,\mu\rangle=
\iota_{v_\xi}<d\alpha,\mu>-\iota_{v_\xi}(\alpha\wedge d\mu) \text{ by expanding the derivative}$$
$$= -\iota_{v_\xi}(\alpha\wedge d\mu) \text{ because the other term is 0 when $\alpha$ is a principal connection} $$
$$= - <\xi, d\mu> = -d<\xi, \mu>=\iota_{\xi_P}\omega_F$$
so that $\iota_{v_\xi}(\omega_F+d\langle\alpha,\mu\rangle)=\iota_{\xi_P}\omega_F-\iota_{\xi_P}\omega_F=0$.
\end{proof}

We can also find a more explicit description of $\Omega$ on $X$. First note that the tangent bundle of $P$ splits as $T^{vert}P\oplus T^{hor}P$. Let $\pi:P\rightarrow Y$ be the projection, then the horizontal component $T^{hor}P:=\ker \alpha$ is identified by $D\pi$ with $\pi^*TY$, while the vertical component is $T^{vert}P:=\ker D\pi = \mathfrak g$. Thus (omitting notation for pullbacks of projections) we have $T(P\times F)=\mathfrak{g}\oplus\pi^*TY\oplus TF$, and $\mathfrak{g}$ acts on $T(P\times F)$ by $\xi\mapsto (-\xi,0,\xi_F)$, hence $$TX\cong \pi^* TY\oplus TF.$$
In other words, a tangent vector $z\in TX$ may be uniquely rewritten as $z=u^\natural+v$, where $u^\natural$ is the horizontal lift of $u\in TY$ to $TP$ while $v\in TF$. Moreover, the vector $\tilde z=(0,u^{\natural},v)\in \mathfrak{g}\oplus\pi^*TY\oplus TF\cong T(P\times F)$ projects to $z$. This means that one can compute $\Omega(z_1,z_2)$ by evaluating $\Omega(\tilde z_1, \tilde z_2)$.

Now let's describe $\Omega$. First recall that the curvature is $F_\alpha\in\Omega^2(M;P\times_G\mathfrak{g})$. It induces a 2-form via pullback on $P$: $\pi^*F_\alpha=d\alpha+\frac{1}{2}[\alpha\wedge\alpha].$ Notice that $d\langle \alpha,\mu\rangle=\langle d\alpha,\mu\rangle+\langle\alpha\wedge d\mu\rangle$ on $P\times F$; the key observation is that $\alpha(\tilde z_1)=\alpha(\tilde z_2)=0$. Then we can calculate
$$\Omega(z_1, z_2)=\Omega(\tilde z_1, \tilde z_2)=$$ $$=\omega_Y(u_1,u_2)+\langle d\alpha(\tilde z_1, \tilde z_2),\mu\rangle+\frac{1}{2}(\langle\alpha(\tilde z_1), d\mu(\tilde z_2)\rangle-\langle\alpha(\tilde z_2), d\mu(\tilde z_1)\rangle)+\omega_F(v_1,v_2)$$ 
$$=\omega_Y(u_1,u_2)+\langle d\alpha(\tilde z_1, \tilde z_2),\mu\rangle+\omega_F(v_1,v_2)$$
$$= \omega_Y(u_1,u_2)+\langle(\pi^*F_\alpha -\frac{1}{2}[\alpha\wedge\alpha])(\tilde z_1, \tilde z_2),\mu\rangle+\omega_F(v_1,v_2)$$
$$= \omega_Y(u_1,u_2)+\langle \pi^*F_\alpha (\tilde z_1, \tilde z_2),\mu\rangle -\langle \frac{1}{2}[\alpha\wedge\alpha])(\tilde z_1, \tilde z_2),\mu\rangle +\omega_F(v_1,v_2)$$
$$= \omega_Y(u_1,u_2)+\langle F_\alpha(u_1,u_2),\mu\rangle +\omega_F(v_1,v_2).$$

\begin{ex}
If, for instance, $\alpha$ is the flat connection induced by a trivialization, then the form on $Y\times F$ is given by $\omega_Y+ \omega_F$. In fact this is an if and only if.
In slightly different words: $F_\alpha$ is the obstruction to $\omega_F$ pushing forward to a form on $X$.
\end{ex}

\begin{cor}
For a complex line bundle $U\times_{U(1)}\C$, with a connection $\alpha$, over a symplectic base $(Y,\omega_Y)$ the 2-form $\omega_\alpha=\omega_Y+d(\rho \alpha)+\omega_{\C}$ descends to a closed 2-form on $U\times_{U(1)}\C$. This form can be described as $\omega_Y+\rho F_\alpha +\omega_\C$ with respect to a horizontal/vertical splitting due to $\alpha$.
\end{cor}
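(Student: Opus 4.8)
The plan is to recognize this corollary as the specialization of the preceding lemma to the case $G = U(1)$ and $F = \C$. First I would set up the data: take the standard rotation action of $U(1)$ on $(\C, \omega_\C)$, which is Hamiltonian, and identify its moment map with the function $\rho$ generating the circle action (up to the usual normalization). The principal bundle is $P = U$ over the symplectic base $(Y, \omega_Y)$, and $\alpha$ is the given principal connection.

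The key simplification is that $\mathfrak{u}(1)$ is one-dimensional, so the moment map $\mu = \rho$ is scalar-valued and the pairing $\langle \alpha, \mu \rangle$ is nothing but the product $\rho\,\alpha$ (here I use the abuse of notation, introduced above, that treats $\alpha$ as a real-valued $U(1)$-connection form). Under this identification the form $\Omega = \omega_Y + d\langle \alpha, \mu \rangle + \omega_F$ of the lemma becomes exactly $\omega_\alpha = \omega_Y + d(\rho\alpha) + \omega_\C$, so the lemma immediately gives that $\omega_\alpha$ descends from $U \times \C$ to the associated bundle $U \times_{U(1)} \C$.

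For closedness, I would note that on $U \times \C$ each summand of $\omega_\alpha$ is closed — $\omega_Y$ is the pullback of a closed form, $d(\rho\alpha)$ is exact, and $\omega_\C$ is closed — so $\omega_\alpha$ is closed upstairs. Since the quotient map $U \times \C \to U \times_{U(1)} \C$ is a surjective submersion, pullback along it is injective on forms and commutes with $d$; hence the descended form is closed as well. Finally, the explicit description is obtained by substituting $\mu = \rho$ into the formula $\Omega(z_1, z_2) = \omega_Y(u_1, u_2) + \langle F_\alpha(u_1, u_2), \mu \rangle + \omega_F(v_1, v_2)$ already derived above, yielding $\omega_Y + \rho F_\alpha + \omega_\C$ in the horizontal/vertical splitting determined by $\alpha$.

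Since everything follows by specialization, there is no serious obstacle; the only points requiring care are verifying that the rotation action on $\C$ is Hamiltonian with moment map $\rho$ and keeping the scalar-versus-Lie-algebra-valued bookkeeping straight, so that $\langle \alpha, \mu\rangle$ and $\langle F_\alpha, \mu\rangle$ genuinely reduce to $\rho\alpha$ and $\rho F_\alpha$. I would also remember that in $\rho F_\alpha$ the factor $\rho$ is the fiber function on $\C$, so this is a form on the total space rather than on $Y$.
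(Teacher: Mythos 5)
Your proposal is correct and follows exactly the paper's route: the corollary is stated there as an immediate specialization of the preceding lemma (with $G=U(1)$, $F=\C$, $\mu=\rho$, so $\langle\alpha,\mu\rangle=\rho\alpha$) together with the explicit curvature formula $\Omega(z_1,z_2)=\omega_Y(u_1,u_2)+\langle F_\alpha(u_1,u_2),\mu\rangle+\omega_F(v_1,v_2)$ already derived in that section. Your added remarks on closedness and on the scalar-versus-Lie-algebra bookkeeping are sound and merely make explicit what the paper leaves implicit.
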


Similarly, if $Y\subset M$ has codimension $2k$ and $NY=\bigoplus_{s=1}^k \mathcal L_s$ is a sum of line bundles, then $NY=U\times_{U(1)^k}\C^k$ where $U$ is the sum of unit bundles inside $L_s$'s. If $\alpha_s$'s are $U(1)$-connections, then $\bigoplus_{s=1}^k \alpha_s$ is a connection on $U$ and the induced symplectic form on a tubular neighborhood of $Y$ is 
$$\omega_{\oplus_s\alpha_s}=\omega_Y+\sum_s (\rho_s F_{\alpha_s})+\omega_{\C^k}.$$

\begin{cor}
Let's consider the case of $Y\subset M$ of (real) codimension higher than two. Then $NY$ is a $\C^k$-bundle and the structure group is $G=U(k)$. Given a connection form $\alpha$ on the corresponding principal $G$ bundle $P$, we get an induced symplectic form
$$\omega_\alpha=\omega_Y + \langle F_{\alpha},\mu \rangle +\omega_{\C^k}.$$
\end{cor}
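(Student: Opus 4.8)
The plan is to obtain this corollary as a direct specialization of the descent Lemma above, taking the fibre to be $(F,\omega_F)=(\C^k,\omega_{\C^k})$ with the standard linear action of $G=U(k)$. First I would record that this action is Hamiltonian: the infinitesimal action of $\xi\in\mathfrak{u}(k)$ on $\C^k$ is the linear vector field $z\mapsto \xi z$, and the map $\mu:\C^k\to\mathfrak{u}(k)^*$ given by $\langle\mu(z),\xi\rangle=\tfrac12\,\omega_{\C^k}(\xi z,z)$ is a moment map. The only structural features of $\mu$ that matter are that it is quadratic in $z$, so in particular $\mu(0)=0$, and that for $k=1$ it recovers $\mu=\rho$ up to the paper's sign conventions, so that the formula below specializes to $\omega_Y+\rho F_\alpha+\omega_\C$ exactly as in the previous corollary.

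With this data in hand, the Lemma applies verbatim: the form $\omega_{\C^k}+d\langle\alpha,\mu\rangle$ on $P\times\C^k$ is closed and descends to a closed $2$-form on $NY=P\times_G\C^k$, and hence so does $\Omega=\omega_Y+d\langle\alpha,\mu\rangle+\omega_{\C^k}$. The explicit horizontal/vertical computation carried out immediately after the Lemma then evaluates this descended form: writing $z_i=u_i^\natural+v_i$ with $u_i\in TY$ horizontal and $v_i\in T\C^k$ vertical, one gets
$$\Omega(z_1,z_2)=\omega_Y(u_1,u_2)+\langle F_\alpha(u_1,u_2),\mu\rangle+\omega_{\C^k}(v_1,v_2),$$
which is precisely the claimed expression $\omega_\alpha=\omega_Y+\langle F_\alpha,\mu\rangle+\omega_{\C^k}$.

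It remains to check that $\omega_\alpha$ is genuinely symplectic, at least on a tubular neighborhood of the zero section $Y\hookrightarrow NY$; closedness is already supplied by the Lemma. Here I would use that $\mu$ vanishes on the zero section, so that along $Y$ the cross term $\langle F_\alpha,\mu\rangle$ drops out and $\omega_\alpha$ restricts, in the splitting $T(NY)|_Y=\pi^*TY\oplus\C^k$, to the block sum $\omega_Y\oplus\omega_{\C^k}$. Each summand is non-degenerate on a complementary subspace, so the restriction is non-degenerate on $Y$. Since non-degeneracy is an open condition, $\omega_\alpha$ stays non-degenerate on a neighborhood of $Y$, and is therefore symplectic there.

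I do not expect a genuine obstacle: beyond the line-bundle corollaries the only new input is the openness argument for non-degeneracy, and the result is otherwise an immediate instance of the general descent Lemma. The one point that needs a little care is fixing the conventions for $\mu$ (the factor $\tfrac12$ and the overall sign) so that the descent computation matches the stated normalization and agrees with the $k=1$ case; this is bookkeeping rather than a conceptual difficulty.
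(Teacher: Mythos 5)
Your proposal is correct and takes essentially the same route as the paper: the corollary is stated there as an immediate specialization of the descent Lemma and the horizontal/vertical computation following it, applied to $(F,\omega_F)=(\C^k,\omega_{\C^k})$ with the standard Hamiltonian $U(k)$-action, which is exactly what you do. Your explicit verification of non-degeneracy near the zero section (via $\mu$ vanishing along $Y$ and openness) is a point the paper leaves implicit, and your moment-map normalization correctly reduces to $\rho$ in the $k=1$ case.
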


More in general we can define $\omega_{\oplus_s\alpha_s}$ on a sum of bundles, where $\alpha_s$ is the connection on a complex vector bundle of dimension $k_s$ over $Y$: 
$$\omega_{\oplus_s\alpha_s}=\omega_Y+\sum_s \langle F_{\alpha_s},\mu_s\rangle+\omega_{\C^{\sum_s k_s}}.$$

\section{A remark on transversality}
In the rest of the discussion all collections of submanifolds will be transverse in the following sense:
\begin{defn}\label{transverse}
Let $\{X_i\}_{i\in\mathcal I}$ be a finite collection of submanifolds of $M$. Let $X_I=\bigcap_{i\in I}X_i$ for all subsets $I\subseteq \mathcal I$. The collection is called \textbf{transverse} if $NX_I=\oplus_{i\in I}NX_i|_{X_I}$.
\end{defn}

This is stronger than just pairwise transversality. On the other hand, once we move to the symplectic world, we assume orthogonality in the following sense:

\begin{defn}\label{orthog}
The intersection of two symplectic submanifolds $X_1,X_2\subset (M,\omega)$  is \textbf{orthogonal} if there is an orthogonal decomposition $TM=TX_{12}\oplus^{\perp}NX_1\oplus^{\perp}NX_2$.
\end{defn}

\begin{rmk}
This definition in particular implies that the two submanifolds are transverse. If you care about other situations (e.g. two Riemann surfaces intersecting in $\R^6$ with linearly independent tangent spaces) then you should look for slightly different definitions. The results in this paper can be adapted to other reasonable definitions.
\end{rmk}

For a family of orthogonal intersection, we can afford to only make pairwise assumptions, because of the following:

\begin{lemma} 
Let $\{X_i\}_{i\in\mathcal I}$ be a finite collection of symplectic submanifolds of $(M,\omega)$. Assume that, whenever $i\neq j$, $X_i, X_j$ intersect orthogonally. Then $\{X_i\}_{i\in\mathcal I}$ is a transverse collection.
\end{lemma}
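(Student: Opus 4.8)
The plan is to reduce everything to a pointwise statement at an arbitrary $x\in X_I$ and then globalize. Fix a Riemannian metric $g$ compatible with $\omega$ with respect to which every pairwise intersection is orthogonal in the sense of Definition \ref{orthog}; using $g$ we identify each normal bundle $NX_i=TM|_{X_i}/TX_i$ with the orthogonal complement $(TX_i)^{\perp}$. The first observation is that, at a point $x\in X_I$, the asserted identity $NX_I=\bigoplus_{i\in I}NX_i|_{X_I}$ is equivalent to surjectivity of the natural comparison map
\[
\Phi_x\colon T_xM\longrightarrow\bigoplus_{i\in I}\bigl(T_xM/T_xX_i\bigr),\qquad v\mapsto \bigl(v+T_xX_i\bigr)_{i\in I},
\]
because $\ker\Phi_x=\bigcap_{i\in I}T_xX_i=T_xX_I$ is automatic, so surjectivity forces $T_xM/T_xX_I\cong\bigoplus_i T_xM/T_xX_i$.

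The crux is the elementary linear algebra behind that surjectivity. Writing $N_i=(T_xX_i)^{\perp}\subset T_xM$, the orthogonality hypothesis says exactly that $N_i\perp_g N_j$ for $i\neq j$. I claim pairwise orthogonality makes $\Phi_x$ surjective: identifying $T_xM/T_xX_i$ with $N_i$ via the orthogonal projection $P_i$, given a target vector $(n_i)_{i\in I}\in\bigoplus_i N_i$ one simply sets $v=\sum_{j\in I}n_j$; then $P_i v=\sum_j P_i n_j=n_i$, since $P_i n_j=0$ for $j\neq i$ by $N_i\perp N_j$. It is precisely here that positive-definiteness of $g$ is used --- the same assertion is false for the skew pairing $\omega$, which is why we pass to a compatible metric. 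Equivalently, pairwise orthogonal subspaces meet only in the origin, so $N_k\cap\bigoplus_{i\in J}N_i=\{0\}$ for $k\notin J$.

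To turn this pointwise surjectivity into the bundle statement (and to know that each $X_I$ is a submanifold in the first place, so that $NX_I$ is even defined) I would run a short induction on $|I|$. The cases $|I|\le 2$ are Definitions \ref{transverse} and \ref{orthog}. For the step, assume $X_J=\bigcap_{i\in J}X_i$ is a submanifold with $TX_J=\bigcap_{i\in J}TX_i$ and $NX_J=\bigoplus_{i\in J}NX_i|_{X_J}$, and let $k\notin J$. Transversality of the pair $X_J,X_k$ at $x\in X_J\cap X_k$ means $T_xX_J+T_xX_k=T_xM$, which after taking $g$-complements is the condition $\bigl(\bigoplus_{i\in J}N_i\bigr)\cap N_k=\{0\}$ established above. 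Hence $X_J\pitchfork X_k$, so $X_{J\cup\{k\}}$ is a submanifold with $NX_{J\cup\{k\}}=NX_J|_{X_{J\cup\{k\}}}\oplus NX_k|_{X_{J\cup\{k\}}}$; feeding in the inductive description of $NX_J$ yields $NX_{J\cup\{k\}}=\bigoplus_{i\in J\cup\{k\}}NX_i|_{X_{J\cup\{k\}}}$, which closes the induction.

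The conceptual content is thus the one-line fact that pairwise $g$-orthogonal subspaces intersect trivially; the point needing the most care is the coherence of the metric. One must check that a single $g$ simultaneously realizes all the pairwise orthogonal decompositions of Definition \ref{orthog}, rather than a different metric for each pair, and that this $g$ identifies every abstract normal bundle with a genuine orthogonal complement. Once such a $g$ is fixed the argument globalizes with no extra work, since transversality of a pair of submanifolds is an open pointwise condition and the induced normal-bundle splitting is canonical; a metric adapted to the whole configuration, of the type constructed in Corollary \ref{totallygeodesicmetric}, supplies what is needed.
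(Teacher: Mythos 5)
Your linear algebra is essentially sound and close in spirit to the paper's own proof (both arguments rest on ``pairwise orthogonal subspaces sum directly''), but there is a genuine gap at the very first step, which you flag yourself and then dismiss too quickly: the existence of a single compatible metric $g$ whose orthogonal complements $(T_xX_i)^{\perp_g}$ coincide with the normal spaces appearing in Definition \ref{orthog} and realize all the pairwise decompositions as $g$-orthogonal ones. The orthogonality in Definition \ref{orthog} is \emph{symplectic} orthogonality (cf.\ the use of $\perp^\omega$ in Lemma \ref{sumofbundles}), and for an arbitrary compatible $g$ an $\omega$-orthogonal decomposition is not $g$-orthogonal; you would need $g=\omega(\cdot,J\cdot)$ with $J$ preserving every $T_xX_i$ and every normal space simultaneously at every intersection point. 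But at a point $x\in X_I$ with $\#I\geq 3$, the natural way to build such a $J$ block-diagonally requires first knowing that $T_xM$ splits as $T_xX_I\oplus\bigoplus_{i\in I}N_i$ --- which is precisely the conclusion of the lemma. So the ``coherence of the metric'' is not bookkeeping; it is essentially the statement being proved. Worse, the source you cite for it cannot be used: Corollary \ref{totallygeodesicmetric} is deduced from Proposition \ref{smoothplumbingsexist}, whose standing hypothesis is that the collection is transverse in the sense of Definition \ref{transverse} --- again the conclusion of this lemma --- and in any case that corollary is a purely smooth statement (totally geodesic submanifolds, exponential-map compatibility) that asserts nothing about compatibility of its metric with $\omega$ or about realizing the $\omega$-orthogonal splittings metrically.

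The fix is to drop the metric and run your argument directly with $\omega$, which is what the paper does. Pairwise $\omega$-orthogonality plus the fact that each $N_i=(T_xX_i)^{\perp_\omega}$ is a \emph{symplectic} subspace (since $X_i$ is a symplectic submanifold) forces $N_i\cap\sum_{j\neq i}N_j=\{0\}$: a vector in that intersection lies in $N_i$ and is $\omega$-orthogonal to all of $N_i$, hence vanishes by nondegeneracy of $\omega|_{N_i}$ --- this is exactly where your appeal to positive-definiteness gets replaced. Moreover Definition \ref{orthog} gives $T_xX_i=T_xX_{ij}\oplus N_j$, hence $N_j\subset T_xX_i$ for $j\neq i$, so your formula $v=\sum_j n_j$ still maps to $(n_i)_{i\in I}$ under $\Phi_x$ and surjectivity goes through verbatim with $\omega$ in place of $g$. (The paper closes the argument with a dimension count for intersections of subspaces rather than your explicit section of $\Phi_x$, but the two are interchangeable; your induction establishing the submanifold structure of $X_I$ is also fine.) With that substitution no auxiliary metric is needed and the circularity disappears.
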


\begin{proof}
The key observation is that for orthogonal sums, $NX_i\perp NX_j$ for all $j\neq i$ implies $NX_i\perp \oplus_{j\in J} NX_j$ whenever $i\notin J$. This means $\{NX_i\}_{i\in I}$ is a linearly independent collection of subspaces in $TM$. Pick a point $x\in X_I$. 
Notice that there is an embedding $N_x X_i\overset{\iota_i}\hookrightarrow N_x X_I$ when $i\in I$ (if a vector is orthogonal to $X_i$ then it must also be orthogonal to the subset $X_I$). Now let's consider the map $\oplus_i \iota_i:\oplus_{i\in I} N_x X_i \rightarrow NX_I$. This map is injective since $NX_i\subset NX_I$ are linearly independent subspaces. It is then an isomorphism because of dimensions. We can use the formula for the dimension of an intersection of vector subspaces and get $\dim(T_xX_I)=\dim(\bigcap_{i\in I} T_xX_i)\geq \sum_i \dim TX_i-(k-1)\cdot n=\sum_{i\in I} (n-\dim NX_i)-(k-1)\cdot n=n-\sum_{i\in I} \dim NX_i$ (here $k=\# I$, $n=\dim(TM)$. Then $\dim NX_I =n-\dim(T_xX_I)\leq n-(n- \sum_{i\in I} \dim (NX_i))=\sum_{i\in I} \dim NX_i$.
\end{proof}

\section{Plumbing for smooth manifolds}

This section is devoted to carefully describing smooth plumbings and their embedding. The plumbings that we will consider later are enhanced versions of the smooth ones.

\textbf{Remark on notation:} Throughout the paper, for each $I\subset \mathcal I$, $X_I:=\bigcap_{i\in I} X_i$ will denote the intersection.

\subsection{Plumbing data}
Let $\{X_i\}_{i\in\mathcal I}$ be a finite collection of smooth submanifolds in $M$. Let their intersections be transverse (as in Definition \ref{transverse}).

In order to build and embed a plumbing, we need the following:
\begin{defn}\label{plumdata}
A \textbf{system of tubular neighborhoods} for the family $\{X_i\}_{i\in\mathcal I}$ is a collection of tubular neighborhood maps $$\psi_I^{I'}:N_{X_{I'}}X_I\rightarrow X_{I'}\text{\ \ \ \ \ \ \ }   \forall I'\subset I\subset \mathcal I \text{\ \ \ \ \ \ \ \ } I'\neq \emptyset$$ where the maps are defined on a neighborhood $\mathcal V_I^{I'}$ of $X_I$ in $N_{X_{I'}}X_I$. We require that the restriction of $\psi_I^{I'}$ to $X_{I}$ is the identity. Let $\mathcal U_I^{I'}=Im(\mathcal V_I^{I'})$ be the corresponding neighborhood in $X_{I'}$.
The maps naturally induce smooth retractions $\rho_I^{I'}=\pi\circ (\psi_I^{I'})^{-1}:\mathcal U_I^{I'}\rightarrow X_I$.

We require that $\rho_I^{I'}\circ \rho_{I'}^{I''}= \rho_I^{I''}:U_{I}^{I''}\rightarrow X_I$ for all $I''\subset I'\subset I$.

A \textbf{system of bundle isomorphisms covering the retractions} is a collection of maps $\Phi_I^{I'}:NX_I\rightarrow NX_{I'}|_{\mathcal U_I^{I'}}$ given by a choice of isomorphisms $\widetilde\Phi_I^{I'}:(\rho_I^{I'})^*(NX_{I'}|_{X_I})\rightarrow NX_{I'}|_{\mathcal U_I^{I'}}$. The maps are defined as $$\Phi_I^{I'}=\widetilde\Phi_I^{I'}\circ (\psi_I^{I'},id):NX_I\cong N_{X_{I'}}X_I\oplus NX_{I'}|_{X_I}  \rightarrow NX_{I'}|_{\mathcal U_I^{I'}}.$$

The data of a system of tubular neighborhoods together with a system of bundle isomorphisms is \textbf{compatible} when, for all $I''\subset I'\subset I$:
\begin{itemize}
\item
$\psi_I^{I''}=\psi_{I'}^{I''}\circ (\Phi_I^{I'})|_{X_{I''}}:N_{X_{I''}}X_I\rightarrow N_{X_{I''}}X_{I'}|_{\mathcal U_I^{I'}}\rightarrow X_{I''};$
\item
$\Phi_I^{I''}=\Phi_{I'}^{I''}\circ \Phi_I^{I'}:NX_I\rightarrow NX_{I''} .$

\end{itemize}

We also refer to this set of compatible data as \textbf{plumbing data}. 
\end{defn}

Given the data above, we can define smooth plumbings:
\begin{defn}
Given a finite family of smooth submanifold $\{X_i\}_{i\in\mathcal I}$ where $|\mathcal I|=n$; given a system of tubular neighborhoods and a system of bundle isomorphisms that are compatible, the \textbf{(n)-plumbing} of the family is
$$N(\bigcup_i X_i)=\bigcup_i NX_i / \sim$$
where $\sim$ is the equivalence relation given by $\Phi_I^{i}(x)\sim \Phi_I^{j}(x)$ for all $I$, for all $i,j\in I$. The (n)-plumbing will be referred to as \textbf{plumbing} when there is no ambiguity.
\end{defn}

Let's check that this is a good definition (nothing bad happens at higher intersections):
\begin{lemma}
The inclusion $NX_i\hookrightarrow \bigcup_i NX_i / \sim$ is injective. Therefore, the plumbing is a smooth manifold.
\end{lemma}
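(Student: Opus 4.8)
The plan is to prove the stronger statement that each $\sim$-equivalence class meets every $NX_i$ in at most one point; injectivity of the inclusion is then the special case of two points in a common $NX_i$, and the smooth structure follows formally. First I would record that each gluing map $\Phi_I^i$ is itself injective. Writing $\Phi_I^i=\widetilde\Phi_I^i\circ(\psi_I^i,\mathrm{id})$, the map $(\psi_I^i,\mathrm{id})$ is a diffeomorphism onto $\mathcal U_I^i$ on base points (since $\psi_I^i$ is a tubular neighborhood embedding) and the identity on fibres, while $\widetilde\Phi_I^i$ is a bundle isomorphism; hence $\Phi_I^i$ is a diffeomorphism of a neighborhood of $X_I$ in $NX_I$ onto $NX_i|_{\mathcal U_I^i}$. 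In particular, for fixed $p\in NX_i$ and each $I\ni i$ there is at most one $x\in NX_I$ with $\Phi_I^i(x)=p$, and such $x$ exists precisely when $p$ lies over $\mathcal U_I^i$.

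The heart of the argument is a merging step that collapses a chain of two basic relations into one. Suppose $p_0\sim p_1\sim p_2$ with $p_0=\Phi_{I_0}^{i_0}(x_0)$, $p_1=\Phi_{I_0}^{i_1}(x_0)=\Phi_{I_1}^{i_1}(x_1)$ and $p_2=\Phi_{I_1}^{i_2}(x_1)$, where $i_0,i_1\in I_0$ and $i_1,i_2\in I_1$. The point $p_1$ lies over $\mathcal U_{I_0}^{i_1}\cap\mathcal U_{I_1}^{i_1}$, and provided the neighborhood system satisfies the nesting property
\[
\mathcal U_{I_0}^{i_1}\cap\mathcal U_{I_1}^{i_1}\subseteq\mathcal U_K^{i_1},\qquad K:=I_0\cup I_1,
\]
we may lift $p_1$ to the unique $x_K\in NX_K$ with $\Phi_K^{i_1}(x_K)=p_1$. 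The cocycle identity $\Phi_K^{i_1}=\Phi_{I_0}^{i_1}\circ\Phi_K^{I_0}$ and injectivity of $\Phi_{I_0}^{i_1}$ force $x_0=\Phi_K^{I_0}(x_K)$, and similarly $x_1=\Phi_K^{I_1}(x_K)$. Substituting back through the identities $\Phi_K^{i_0}=\Phi_{I_0}^{i_0}\circ\Phi_K^{I_0}$ and $\Phi_K^{i_2}=\Phi_{I_1}^{i_2}\circ\Phi_K^{I_1}$ yields $p_0=\Phi_K^{i_0}(x_K)$ and $p_2=\Phi_K^{i_2}(x_K)$, i.e. $p_0\sim p_2$ by the single basic relation through $K$. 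I expect the nesting property to be the main obstacle: it is the only place where the bare tubular-neighborhood data must be controlled, and it is exactly what turns the compatibility (cocycle) identities into a tool for shortening chains. It is arranged by choosing the system of tubular neighborhoods compatibly — for instance from a single metric making the $X_i$ totally geodesic and orthogonal, as in Corollary \ref{totallygeodesicmetric} — so that neighborhoods of $X_{I_0}$ and $X_{I_1}$ in $X_{i_1}$ overlap only near $X_{I_0\cup I_1}$.

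Granting the merging step, the conclusion is quick. Any $\sim$-chain joining $p,q\in NX_i$ is a finite concatenation of basic relations, so induction on its length reduces it to a single relation through one index set $K$; since both endpoints lie in $NX_i$, that relation reads $p=\Phi_K^i(x_K)=q$, whence $p=q$ and the inclusion $NX_i\hookrightarrow\bigcup_i NX_i/\sim$ is injective. For the manifold structure I would use the images of the $NX_i$ as an open cover of the quotient: the transition maps $\Phi_I^j\circ(\Phi_I^i)^{-1}$ are diffeomorphisms between open subsets of $NX_i$ and $NX_j$ by the first paragraph, and the identity $\Phi_I^{I''}=\Phi_{I'}^{I''}\circ\Phi_I^{I'}$ supplies the cocycle condition on triple overlaps. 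Finally, checking that the graph of $\sim$ is closed (the identifications occur along the closed conditions ``$p$ lies over $\mathcal U_I^i$'' with continuous gluing maps) shows the quotient is Hausdorff, so $\bigcup_i NX_i/\sim$ is a smooth manifold.
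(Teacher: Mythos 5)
Your proof is correct modulo the nesting hypothesis you flag, and its engine is the same as the paper's: the compatibility identity $\Phi_I^{I''}=\Phi_{I'}^{I''}\circ\Phi_I^{I'}$ is what collapses a composite of basic identifications into a single identification through the union of the index sets. The difference is in how much of the required work each argument actually carries out, and here your version does genuinely more. The paper's proof only verifies consistency at a triple intersection: it rewrites the three pairwise identifications near $X_{ijk}$ as identifications through $NX_{ijk}$ and observes that they agree; it tacitly assumes that every point participating in such a chain lifts to $NX_{ijk}$, and it says nothing about longer chains. Your merging lemma plus induction on chain length is the completion of that sketch, and your nesting property $\mathcal U_{I_0}^{i_1}\cap\mathcal U_{I_1}^{i_1}\subseteq\mathcal U_{I_0\cup I_1}^{i_1}$ is precisely the hypothesis the paper uses implicitly. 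You are right that this is not a formality: it does not follow from Definition \ref{plumdata}, and without it the lemma is false. Take three lines in the plane forming a triangle, so that all $X_{ij}$ are nonempty but $X_{123}=\emptyset$ and every compatibility condition of Definition \ref{plumdata} is vacuous; if the neighborhoods $\mathcal U_{12}^2$ and $\mathcal U_{23}^2$ (and similarly at the other vertices) are chosen large enough to overlap, then a chain of three basic relations travels around the triangle and identifies a point of $NX_1$ lying over a point near $X_{12}$ with a distinct point of $NX_1$ lying over a point near $X_{13}$, since nothing forces this monodromy composite of gluing maps to be the identity. So your insistence on arranging nesting (for instance via the metric of Corollary \ref{totallygeodesicmetric}) repairs a real gap in the statement and proof as written. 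The one place where you are no more rigorous than the paper is Hausdorffness: closedness of the graph of $\sim$ is again a condition on how the domains $\mathcal V_I^{I'}$ are chosen (this is exactly the line-with-two-origins phenomenon), so it deserves the same nesting-type care rather than a parenthetical; the paper, for its part, does not address it at all.
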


\begin{proof}
We need to show that each point of $NX_i$ in a neighborhood of $X_{ij}$ is identified with exactly one point of $NX_j$.
The trouble potentially arises at higher intersection points. Consider a neighborhood of $X_{ijk}$. Consider a point $(z,v_i,v_j,v_k)\in NX_{ijk}\cong NX_i\oplus NX_j \oplus NX_k$. When looking at the equivalence relation from $NX_i$ to $NX_j$ we get 
$$N(X_{i})\ni \Phi_{ij}^i(\Phi_{ijk}^{ij}(z,v_k,v_j, v_i))\sim \Phi_{ij}^j(\Phi_{ijk}^{ij}(z,v_k,v_j, v_i)) \in N(X_{j}).$$
On the other hand if we consider $NX_j$ and $NX_k$ we get
$$N(X_{j})\ni\Phi_{jk}^j(\Phi_{ijk}^{jk}(z,v_k,v_j, v_i))\sim \Phi_{jk}^k(\Phi_{ijk}^{jk}(z,v_k,v_j, v_i))\in N(X_{k})$$
while $NX_i, NX_k$ yield
$$N(X_{i})\ni\Phi_{ik}^i(\Phi_{ijk}^{ik}(z,v_k,v_j, v_i))\sim \Phi_{ik}^k(\Phi_{ijk}^{ik}(z,v_k,v_j, v_i)) \in N(X_{k})$$
so $NX_i$ gets identified with $NX_k$ both directly and through $NX_j$: do the two identifications agree?
We can use the compatibility assumption, $\Phi_{I'}^{I''}\circ \Phi_{I}^{I'}=\Phi_I^{I''}$. All the above relations then reduce to
$$(\Phi_{ijk}^i(z,v_k,v_j,v_i))\sim (\Phi_{ijk}^j(z,v_k,v_i,v_j)) \sim (\Phi_{ijk}^k(z,v_i,v_j,v_k)).$$
\end{proof}

\begin{rmk}[Important]
The data used to construct the (n)-plumbing consists of a set of maps which can be thought of as the data of, for each $X_i$, the (n-1)-plumbing of its intersection with the other submanifolds, plus an embedding of such (n-1)-plumbing into $X_i$.
\end{rmk}

\subsection{Metrics on plumbings}

\begin{lemma}\label{bundlemetric}
Given a smooth submanifold $X\subset M$, one can construct metrics on $NX$ which make $X$ totally geodesic, make the fibres $N_xX$ totally geodesic (for all $x\in X$), and are linear on the fibres $N_xX$ (when there is no ambiguity, we will call this type of metric a \textbf{bundle metric} on $NX$).
\end{lemma}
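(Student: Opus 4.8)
The plan is to build a Sasaki-type metric on the total space of $NX$ out of three pieces of auxiliary data: an arbitrary Riemannian metric $g_X$ on the base $X$, a fibre (bundle) metric $h$ on $NX$ giving inner products $h_x$ on each $N_xX$, and a connection $\nabla$ on $NX$ that is compatible with $h$ (a metric connection, which always exists; for instance the normal connection induced by any Riemannian metric on $M$ is compatible with the induced fibre metric). The connection determines a splitting $T(NX)=H\oplus V$ of the tangent bundle of the total space into horizontal and vertical subbundles, where $V_v$ is canonically identified with the fibre $N_{\pi(v)}X$ and $d\pi$ maps $H_v$ isomorphically onto $T_{\pi(v)}X$. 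I would then define the metric $G$ on $NX$ by declaring $H$ and $V$ to be $G$-orthogonal, setting $G|_H=\pi^\ast g_X$ and taking $G|_V$ to be $h$ under the vertical identification.

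With this metric, the requirement of being linear on the fibres is immediate: the restriction of $G$ to a fibre $N_xX$ is the constant inner product $h_x$, i.e. the flat linear metric, so each fibre is isometric to a Euclidean vector space. Checking that the fibres are totally geodesic is the step that requires the most care, and it is where compatibility of $\nabla$ with $h$ becomes essential. By construction $\pi\colon(NX,G)\to(X,g_X)$ is a Riemannian submersion, and the vertical metric does not depend on the fibre coordinate. A short Koszul-formula computation in a local $h$-orthonormal frame $\{e_a\}$ with fibre coordinates $v^a$ shows that the horizontal component of $\nabla^G_{\partial_{v^a}}\partial_{v^b}$ is controlled by the symmetric combination $\Gamma_{ia}^b+\Gamma_{ib}^a$ of the connection coefficients. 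Because $\nabla$ is metric these coefficients are skew-symmetric in the frame indices $a,b$, so the combination vanishes; hence the second fundamental form of every fibre is zero (equivalently, the O'Neill tensor $T$ of the submersion vanishes). For a general, non-metric connection this fails, so I expect this to be the main obstacle.

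For the zero section I would avoid any computation and argue by symmetry. The fibrewise antipodal map $\iota\colon NX\to NX$, $v\mapsto -v$, is a $G$-isometry: it preserves the vertical metric (which is even and independent of $v$), and since $\nabla$ is a linear connection it carries the horizontal space at $v$ to the horizontal space at $-v$ while fixing the base, so it preserves $\pi^\ast g_X$ and the orthogonal splitting $H\oplus V$. The fixed-point set of an isometry is totally geodesic, and here that fixed-point set is exactly the zero section $X=\{v=0\}$; therefore $X$ is totally geodesic. Together these three observations establish all the required properties, so $G$ is the desired bundle metric.
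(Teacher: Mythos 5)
Your proposal is correct and follows essentially the same construction as the paper: an arbitrary metric on the base, a fibrewise linear metric, and a metric connection, assembled into a connection (Sasaki-type) metric via the horizontal/vertical splitting with $G|_H=\pi^*g_X$ and $G|_V=h$. The only real difference is that where the paper asserts the totally geodesic properties are clear from the local product structure, you verify them carefully --- the Koszul/O'Neill computation using skew-symmetry of the metric connection coefficients for the fibres, and the fixed-point set of the fibrewise antipodal isometry for the zero section --- which, if anything, supplies details the paper glosses over.
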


\begin{proof}
We can freely choose the following data:
\begin{itemize}
\item A metric $g$ on X;
\item A smoothly varying family of linear metrics $g_x$ on the fibres, that is, a bundle metric (it can be easily constructed locally, and local patches can be used to construct a global bundle metric through a partition of unity);
\item A metric connection on $NX$ (this is equivalent to a choice of principal connection on the bundle of orthonormal frames).
\end{itemize}
Now let's define the metric $\lambda$ on $NX$, on a local trivialization of $NX$, as $\lambda=g+g_x$ (with respect to the Ehresmann splitting $T_p(NX)\cong T_xX\oplus N_xX$). We can observe that such local definitions agree on chart intersections and form a metric on the whole bundle.

It is clear then that $\lambda$ makes $X$ and $N_xX$ totally geodesic, because this property can be checked locally. Locally, the bundle looks like $(\mathcal U\times N_xX,g+g_x)$, where $\mathcal U$ is an open set in $X$; with respect to the local trivialization, $g_x$ is independent of $x$.
\end{proof}

The following lemma is the key to constructing well behaved metrics on plumbings.

\begin{lemma}\label{extendplumbmetric}
Given a plumbing, assume that there is a metric $g$ on $\bigcup_i X_i$ (that is, metrics $g_i$ defined on each $X_i$, agreeing on intersections) such that $\psi_I^J=exp_g:N_{X_J}X_I\rightarrow X_J$ for all non-empty $J\subset I\subseteq \mathcal I$.
Moreover let's assume that $(\psi_{ij}^i)^{*}g_i=\lambda_j$ on $N_{X_i}X_{ij}\cong NX_j |_{X_{ij}}$ is a bundle metric as in Lemma \ref{bundlemetric} (in particular it is linear in the fibres of $N_{X_i}X_{ij}$ for all $i,j$). Also, let's assume that, in a neighborhood of a higher intersection $X_{ijk}$, $\lambda_j=(\Phi_{ijk}^{ij})_*(\lambda_j|_{X_{ijk}})$ is the push-forward of the fibrewise metric on $NX_j|_{X_{ijk}}$ by the (local) bundle isomorphism $(\Phi_{ijk}^{ij})|_{N_{X_i}{X_{ijk}}}:N_{X_i}{X_{ijk}}\cong N_{X_{ij}}X_{ijk}\oplus N_{X_{ik}}X_{ijk}\rightarrow N_{X_i}X_{ij}$.

Then there is a metric $\mu$ on $P=N(\bigcup_i X_i)$ such that $exp_\mu=id:NX_i\rightarrow NX_i\subset P$ for all $i$ (and $exp_\mu=id:NX_I\rightarrow NX_I\subset P$). Such $\mu$ is linear in the fibres of $NX_i$ for all $i$.
\end{lemma}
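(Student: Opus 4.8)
The plan is to build $\mu$ on $P$ by constructing, for each $i$, a bundle metric $\mu_i$ on $NX_i$ in the sense of Lemma \ref{bundlemetric}, and then verifying that the $\mu_i$ induce the same metric on the glued overlaps, so that they descend to a single metric on $P=\bigcup_i NX_i/\sim$. The point of passing to bundle metrics is that a bundle metric makes each fibre $N_xX_i$ totally geodesic and flat (linear), so the fibrewise geodesics through a zero-section point are straight rays $t\mapsto tv$, whence $\exp_{\mu_i}=\mathrm{id}$ on $NX_i$; near a multiple intersection the fibre directions $NX_i|_{X_I}$ will moreover be mutually $\mu$-orthogonal, so $\exp_\mu=\mathrm{id}$ on $NX_I=\bigoplus_{i\in I}NX_i|_{X_I}$ as well. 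Thus the entire substance of the lemma is the gluing: arranging that $\mu_i$ and $\mu_j$ agree on the overlap of $NX_i$ and $NX_j$ inside $P$.

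First I would unwind the gluing condition near a double intersection $X_{ij}$. A bundle metric $\mu_i$ is determined by the triple consisting of the base metric $g_i$ on $X_i$, a fibre metric $h_i$ on $NX_i$, and a metric connection $\nabla_i$. The overlap is a neighbourhood of the zero section in $NX_{ij}\cong NX_i|_{X_{ij}}\oplus NX_j|_{X_{ij}}$, identified into $NX_i$ and $NX_j$ by $\Phi_{ij}^i$ and $\Phi_{ij}^j$, and descent requires $(\Phi_{ij}^i)^*\mu_i=(\Phi_{ij}^j)^*\mu_j$. Using $\Phi_{ij}^i=\widetilde\Phi_{ij}^i\circ(\psi_{ij}^i,\mathrm{id})$ together with the hypothesis $(\psi_{ij}^i)^*g_i=\lambda_j$, the pullback $(\Phi_{ij}^i)^*\mu_i$ splits into the base metric on $X_{ij}$, the fibre part of $\lambda_j$ on $NX_j|_{X_{ij}}$ (coming from $g_i$), the fibre metric $h_i$ on $NX_i|_{X_{ij}}$, and connection terms from $\nabla_i$ and $\lambda_j$. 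Matching against the symmetric expression for $(\Phi_{ij}^j)^*\mu_j$ reduces descent to the constraints: over $X_{ij}$ the fibre metric $h_j$ must equal the fibre part of $\lambda_j$, the fibre metric $h_i$ must equal the fibre part of $\lambda_i=(\psi_{ij}^j)^*g_j$, and the connections must match correspondingly (the base parts agree automatically, since $g_i$ and $g_j$ both restrict to $g$ along $X_{ij}$). In short, the fibrewise boundary data of $\mu_j$ along $X_{ij}$ is forced by the restriction of the transverse base metric $g_i$.

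Next I would produce $h_i$ and $\nabla_i$ satisfying all of these constraints simultaneously. The constraint imposed on $h_j$ near $X_{ij}$ and the one imposed near $X_{jk}$ both prescribe the same object near the triple locus $X_{ijk}$, and their compatibility is exactly the content of the hypothesis $\lambda_j=(\Phi_{ijk}^{ij})_*(\lambda_j|_{X_{ijk}})$: it says the fibrewise data prescribed near $X_{ij}$ and near $X_{jk}$ coincide along $X_{ijk}$ after transport by the compatible plumbing isomorphisms, with the relation $\Phi_{I'}^{I''}\circ\Phi_I^{I'}=\Phi_I^{I''}$ guaranteeing that these transports are consistent. Granting this agreement, the prescribed values of $h_i$ and $\nabla_i$ along the various $X_{ij}$ are mutually consistent, and since fibre metrics form a convex set and connections an affine space, a partition of unity subordinate to the tubular neighbourhoods extends them to globally defined $h_i,\nabla_i$ on each $NX_i$ matching the boundary data. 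Feeding $(g_i,h_i,\nabla_i)$ into Lemma \ref{bundlemetric} produces the bundle metrics $\mu_i$, which by the reduction above glue to a metric $\mu$ on $P$.

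The main obstacle is precisely this consistency of the fibrewise prescriptions at higher intersections: a priori $h_j$ is constrained from every other sheet $X_i$ meeting $X_j$, and one must check that these constraints do not conflict where three or more submanifolds meet. This is why the hypotheses are stated both at double intersections (the bundle-metric condition on each $\lambda_j$) and at triple intersections (the pushforward condition), and the bookkeeping enforced by the compatibility relations of the plumbing data is where all the care is needed. Once consistency is secured, the extension by partition of unity and the verification that $\exp_\mu=\mathrm{id}$ and that $\mu$ is fibrewise linear are routine, being local statements on each $NX_i$ already handled by Lemma \ref{bundlemetric}.
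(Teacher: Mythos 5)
Your proposal is correct and takes essentially the same approach as the paper's proof: the paper likewise builds a bundle metric $\mu_i$ on each $NX_i$ from the triple (base metric $g_i$, fibrewise metric prescribed near each $X_{ij}$ by pushing forward $\lambda_i+\lambda_j$ via $\Phi_{ij}^i$, metric connection prescribed analogously), verifies consistency at triple intersections using the pushforward hypothesis together with the compatibility $\Phi_{ij}^i\circ\Phi_{ijk}^{ij}=\Phi_{ik}^i\circ\Phi_{ijk}^{ik}$, extends by partition of unity where there are no constraints, and then observes that the $\mu_i$ agree under the gluing and give $\exp_\mu=\mathrm{id}$.
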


\begin{proof}
For each $i$, let's construct a metric $\mu_i$ on $NX_i$ following Lemma \ref{bundlemetric}. We need choices of:
\begin{itemize}
\item a metric on $X_i$;
\item a fibrewise linear metric;
\item a metric connection.
\end{itemize}
Let's pick the given $g_i$ to be the metric on $X_i$.
To get a fibrewise linear metric, notice that such a metric is already given by $(\psi_{ij}^j)^{*}g_j=\lambda_i$ on $N_{X_j}X_{ij}\cong NX_i|_{X_{ij}}$ at the fibres corresponding to intersections.
Let's extend this to all of $NX_i$ by using the bundle isomorphisms. Consider the bundle isomorphism $\Phi_{ij}^i:NX_{ij}\rightarrow NX_i$. On $NX_{ij}$ we can easily construct a fibrewise metric given by $g$ (by considering on each fibre $N_xX_{ij}\cong N_xX_i\oplus N_xX_j$ the metric $\lambda_i+\lambda_j$). This, in particular, yields a fibrewise linear metric on $NX_{ij}$, considered as a bundle over $N_{X_i}X_{ij}$. Therefore, the push forward yields a fibrewise metric $\tilde\lambda_i^j$ on $NX_i$, in a neighborhood of the intersection with $X_j$.

The fact that, when constructing fibrewise metrics in a neighborhood of $X_j$ or $X_k$ in $X_i$, we get the same result (that is, the fact that $\tilde\lambda_i^j=\tilde\lambda_i^k$) is due to the plumbing property $\Phi_{ijk}^i=\Phi_{ij}^i\circ \Phi_{ijk}^{ij}=\Phi_{ik}^i\circ \Phi_{ijk}^{ik}$ together with the fact that $\tilde \lambda_i^j|_{X_{ij}}=(\Phi_{ijk}^{ij})_*(g), \tilde \lambda_i^k|_{X_{ik}}=(\Phi_{ijk}^{ik})_*(g)$ (therefore $\tilde\lambda_i^j=(\Phi_{ij}^i\circ \Phi_{ijk}^{ij})_*(g)=\Phi_{ik}^i\circ \Phi_{ijk}^{ik}=\tilde\lambda_i^k$).
So now we have a fibrewise metric $\tilde\lambda_i$ on $NX_i$, on a neighborhood of $\bigcup_{j\neq i} X_{ij}$. Let's extend it to a fibrewise metric on all of $X_i$ (there isn't any restriction on the choice of metric away from the intersections: the extension exists by a partition of unity argument).

Now on to finding a connection on $X_i$. In a neighborhood of $X_{ij}$ (for all $j$) there is a natural connection on $NX_i|_{\mathcal U_{ij}^i}\cong NX_{ij}\cong NX_i|_{X_{ij}}\oplus NX_j|_{X_{ij}}$. The latter has a connection $\alpha_{ij}$ given by a direct sum of the metric connections $\alpha_{\lambda_i}$ on $NX_i|_{X_{ij}}\cong N_{X_j}X_{ij}$ and $\alpha_{\lambda_j}$ on $NX_j|_{X_{ij}}\cong N_{X_i}X_{ij}$. This yields a connection $\tilde\alpha_{ij}$ on the bundle $NX_{ij}\rightarrow NX_j|_{X_{ij}}$.
So then the pushforward of $\tilde\alpha_{ij}$ via the isomorphism $\Phi_{ij}^i:NX_{ij}\rightarrow NX_i$ is a connection on $NX_i$, close to a neighborhood of $X_j$. As before, such connection can be constructed close to any intersection, and it's easy to check that $(\Phi_{ij}^i)^*\tilde\alpha_{ij}=(\Phi_{ik}^i)^*\tilde\alpha_{ik}$.
We get a metric connection on $(NX_i,\tilde\lambda_i)$ defined close to a neighborhood of $\bigcup_{j\neq i}X_{ij}$.
Again, by using a partition of unity, let's extend this connection to a metric connection on the whole of $NX_i$.

The data of $\tilde\lambda_i$ on $NX_i$ and a corresponding metric connection yields a metric $\mu_i$ on  $NX_i$. Let's check that $\mu_i=\mu_j$, whenever they are both defined, on $N(\bigcup_i X_i)$. Just notice that $(\Phi_{ij}^i)^(-1)^*\mu_i=(\Phi_{ij}^i)^(-1)^*\mu_j$ by construction of $\mu_i,\mu_j$ for all $i,j$.
This means that all the $\mu_i$ agree and yield a metric $\mu$ on $N(\bigcup_i X_i)$.

The required property $exp_\mu=id:NX_i\rightarrow NX_i\subset P$ for all $i$ (and $exp_\mu=id:NX_I\rightarrow NX_I\subset P$) is then satisfied because clearly from the construction $exp_{\mu_i}=id:NX_i\rightarrow NX_i\subset P$.

\end{proof}

\begin{lemma}\label{plumbmetric}
Given a plumbing, there is a metric $\lambda$ on it such that, for each $i$, $\lambda|_{NX_i}$ has the structure described in Lemma \ref{bundlemetric}.
In particular each $X_i$ is totally geodesic with respect to $\lambda$, and $exp_\lambda=id:NX_i\rightarrow NX_i\subset P$ for all $i$.
\end{lemma}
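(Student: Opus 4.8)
The plan is to reduce the statement to Lemma \ref{extendplumbmetric}. That lemma, applied to a suitable metric $g$ on $\bigcup_i X_i$, produces a metric $\mu$ on $P$ which is fibrewise linear on each $NX_i$, makes every $X_i$ totally geodesic, and satisfies $\exp_\mu=\mathrm{id}\colon NX_i\to NX_i\subset P$; moreover its proof builds $\mu$ by assembling, on each $NX_i$, a metric $\mu_i$ of the form provided by Lemma \ref{bundlemetric}, so $\mu$ already restricts to a bundle metric on every $NX_i$. Thus the entire task is to manufacture an input metric $g=(g_i)$ on $\bigcup_i X_i$, with the $g_i$ agreeing on all intersections, satisfying the hypotheses of Lemma \ref{extendplumbmetric}: $\psi_I^J=\exp_g$ for every nonempty $J\subset I$, each pullback $(\psi_{ij}^i)^{*}g_i$ a bundle metric on $N_{X_i}X_{ij}$, and the higher-intersection pushforward identities near each $X_{ijk}$. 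Then $\lambda:=\mu$ is as required.

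The mechanism that lets us realize the \emph{given} tubular maps $\psi_I^J$ as exponential maps is the following elementary observation. If $\nu$ is a bundle metric on a normal bundle $N_ZX$ in the sense of Lemma \ref{bundlemetric}, then each fibre is flat and totally geodesic and meets the zero section orthogonally, so the radial rays $t\mapsto tv$ are $\nu$-geodesics and the normal exponential map of $\nu$ along $Z$ is the identity $N_ZX\to N_ZX$. Consequently, if on a neighborhood of a submanifold $Z\subset X$ we \emph{define} the metric to be the pushforward $\psi_*\nu$ under the given tubular map $\psi\colon N_ZX\to X$, then $\psi$ carries radial $\nu$-geodesics to $(\psi_*\nu)$-geodesics and $\psi|_Z=\mathrm{id}$, so $\psi=\exp_{\psi_*\nu}$ near $Z$ and $(\psi^{*})(\psi_*\nu)=\nu$ is a bundle metric by construction. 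This simultaneously secures the exponential-map hypothesis and the bundle-metric hypothesis of Lemma \ref{extendplumbmetric}.

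Accordingly, I would construct $g$ by induction on the depth of the intersection strata of $\bigcup_i X_i$, proceeding from the deepest (highest) intersections outward. On a deepest stratum choose any metric. To pass to a less deep stratum $X_{I'}$, pick a bundle metric $\nu$ on the relevant normal bundle whose base metric is the metric already built on the deeper strata, and set the metric equal to $\psi_*\nu$ near those strata, extending arbitrarily by a partition of unity away from them. By the observation above this keeps $\psi_{I}^{I'}=\exp_g$ and the fibrewise-linearity condition intact at each step. The recursive structure here is exactly the one pointed out in the Remark after the definition of the plumbing: the intersection pattern inside each $X_i$ is itself an $(n-1)$-plumbing, so the induction closes.

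The main obstacle is consistency where several strata abut, i.e.\ checking that the pushforward metrics built near $X_{ij}$ and near $X_{ik}$ agree on a neighborhood of the triple locus $X_{ijk}$ (and its analogues at deeper strata), so that the $g_i$ are well defined and agree on overlaps. This is resolved exactly as the equality $\tilde\lambda_i^j=\tilde\lambda_i^k$ is resolved in the proof of Lemma \ref{extendplumbmetric}: one invokes the plumbing compatibility relations $\psi_I^{I''}=\psi_{I'}^{I''}\circ\Phi_I^{I'}$ and $\Phi_I^{I''}=\Phi_{I'}^{I''}\circ\Phi_I^{I'}$, which force the two pushforward constructions to coincide. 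With $g$ so constructed, Lemma \ref{extendplumbmetric} yields the desired $\lambda$, whose restriction to each $NX_i$ has the structure of Lemma \ref{bundlemetric}, makes each $X_i$ totally geodesic, and satisfies $\exp_\lambda=\mathrm{id}$ on each $NX_i$.
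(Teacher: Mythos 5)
Your proposal is correct and follows essentially the same route as the paper: a reverse induction on the depth of the strata, building metrics $g_I$ on each $X_I$ so that the given tubular maps become exponential maps (via pushforward of bundle metrics), handling consistency at higher intersections through the plumbing compatibility relations, and concluding with Lemma \ref{extendplumbmetric}. The only cosmetic difference is that the paper re-invokes Lemma \ref{extendplumbmetric} as a black box at each inductive step (applied to the induced plumbing data inside $X_{I'}$) rather than redoing its pushforward-and-compatibility construction by hand as you do.
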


\begin{proof}
This can be proved by induction, by using Lemma \ref{extendplumbmetric}. 

We'll use (reverse) induction on $k=\#I$ to prove the following statement: there exist metrics $g_I$ on each $X_I$ satisfying the conditions of Lemma \ref{extendplumbmetric}, that is: 
\begin{enumerate}
\item $\psi_J^I=exp_g:N_{X_I}X_J\rightarrow X_I$ for all $I\subset J\subseteq \mathcal I$ (for $\#I\geq k$);
\item $(\psi_{J}^I)^{*}g_I=\lambda_{J}^I$ on $N_{X_I}X_{J}$ is a bundle metric as per Lemma \ref{bundlemetric}. 
\item In a neighborhood of a higher intersection $X_{J'}$ (where $I\subset J\subset J'$), we have that $\lambda_{J}^I=(\Phi_{J'}^{J})_*(\lambda_{J}^I|_{X_{J'}})$ is the push-forward of the fibrewise metric on $N_{X_I}X_{J'}$ by the (local) bundle isomorphism $N_{X_I}X_{J'}\cong N_{X_I}X_{J}$.
\end{enumerate}

In particular, once we reach $k=1$, the metrics $g_i$ on $X_i$ can be used to obtain a metric $\lambda$ (see Lemma \ref{extendplumbmetric}).
The induction starts with $\#I=k=d$ ($d$ is the depth of the intersection). In this case, $g_I$ can be any metric on $X_I$. 

Inductive step: assume that $g_I$ has been constructed for all $I$ with $\#I\geq k>1$. Let $I'\subset \mathcal I$ have cardinality $\#I'=k-1$. Consider all the $I$'s such that $I'\subset I$, $\#I\geq k$. The plumbing in particular induces plumbing data for the family $\{X_I\}_{I'\subset I}$ in $X_{I'}$. The metrics $g_I$ on $X_I$ satisfy the requirements of Lemma \ref{extendplumbmetric}, by inductive hypothesis. Therefore, we can use Lemma \ref{extendplumbmetric} to build $\tilde g_{I'}$ on $N_{X_{I'}}(\bigcup_I X_I)$. The plumbing maps $\psi_I^{I'}$ can be used to push forward $\tilde g_{I'}$ on $N_{X_{I'}}X_I$ to $g_{I'}$ on (a neighborhood of $X_I$ in) $X_{I'}$ (the fact that this actually induces a metric coherently on a neighborhood of $\bigcup_I X_I$ in $X_{I'}$ follows from the plumbing properties). Now just extend $g_{I'}$ to the whole of $X_{I'}$ (there are no requirements away from $\bigcup_I X_I$). This procedure constructs $g_{I'}$ for any given ${I'}$ when $\#{I'}=k-1$. Such maps satisfy property (1), (2), (3) by construction and so the induction can continue until $k=1$.

\end{proof}

\subsection{Existence of smooth plumbings}

In order to prove that plumbing data always exist, we need one more preliminary lemma:

\begin{lemma}\label{smoothembedlemma}
Given a metric $\mu$ on $M$, and given the data of a plumbing of submanifolds $\{X_i\}_{i\in\mathcal{I}}$ (with plumbing metric $\lambda$) inside a manifold $M$, assume there exists a map
$\nu:N((\bigcup_{\#I=k} X_I)\cup(\bigcup_{i=1}^s X_{J_i}))\rightarrow M$ (for some $J_i\subseteq \mathcal I$, $\#J_i=k-1$ for all $i$)
such that:
\begin{itemize}
\item $\nu(X_i)=X_i$ for all $i$;
\item $\nu^*(mu)=\lambda$.
\end{itemize}
Assume that there exists a subset $J_{s+1}\subseteq \mathcal I$ not equal to any of the $J_i$ for $i\leq s$, such that $\# J_{s+1}=k-1$. Then one can construct a metric $\tilde\mu$ on $M$ and a function $\tilde\nu:N((\bigcup_{\#I=k} X_I)\cup(\bigcup_{i=1}^{s+1} X_{J_i}))\rightarrow M$ such that:
\begin{itemize}
\item $\tilde\nu(X_i)=X_i$ for all $i$;
\item $\tilde\nu^*(\tilde\mu)=\lambda$.
\end{itemize}
\end{lemma}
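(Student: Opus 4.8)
The plan is to treat this as the single inductive step that enlarges the domain of the embedding by one more codimension-$(k-1)$ stratum. Write $J=J_{s+1}$, so $\#J=k-1$ and $X_J$ is the new stratum. The part of the new piece $NX_J$ that already lies in the domain of $\nu$ is exactly a neighborhood of the deeper strata $X_I$ with $J\subsetneq I$, $\#I=k$ (these are contained in $\bigcup_{\#I=k}X_I$). On this overlap $\nu$ is already defined and satisfies $\nu^*\mu=\lambda$; since $\lambda$ makes $X_J\subset P$ totally geodesic (Lemma \ref{plumbmetric}) and $\nu$ is a $\mu$-isometry there with $\nu(X_J)\subseteq X_J$, the submanifold $X_J$ is already totally geodesic for $\mu$ near the deeper strata. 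This is the fact that makes the step possible: the region where $\mu$ is not yet adapted to $X_J$ can be taken disjoint from a neighborhood of the strata that $\nu$ has already committed to.

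First I would build an auxiliary metric $\mu'$ on $M$ that makes $X_J$ totally geodesic everywhere, with totally geodesic and linear normal fibres in the sense of Lemma \ref{bundlemetric}, and that agrees with $\mu$ both near the deeper strata $X_I$ (where $\mu$ already has this property) and outside a small neighborhood of $X_J$; the interpolation is carried out with a partition of unity exactly as in Lemmas \ref{bundlemetric}--\ref{plumbmetric}. I would then define $\tilde\nu$ to equal $\nu$ on the old domain and to equal $\exp_{\mu'}$, composed with the identification $NX_J\cong N_MX_J$, on $NX_J$. Because $\mu'=\mu$ near the deeper strata and both make $X_J$ totally geodesic, the $\mu'$-geodesics normal to $X_J$ coincide there with the images under $\nu$ of the fibres of $NX_J$ (these fibres are $\lambda$-geodesics and $\nu$ is an isometry); hence $\exp_{\mu'}$ agrees with $\nu$ on the overlap and the two definitions glue into a single smooth map. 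Shrinking the neighborhood if necessary makes $\tilde\nu$ an embedding.

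Finally I would produce the output metric by pushing $\lambda$ forward: set $\tilde\mu:=(\tilde\nu)_*\lambda$ on the image of $\tilde\nu$ and interpolate with $\mu$ outside a neighborhood of that image. Since $\tilde\nu=\nu$ and $\nu^*\mu=\lambda$ on the overlap, this pushforward agrees with $\mu$ there, so $\tilde\mu$ is well defined and smooth, and by construction $\tilde\nu^*\tilde\mu=\lambda$ identically. The condition $\tilde\nu(X_i)=X_i$ holds on the old domain by hypothesis and on the new piece because $X_J$ is $\mu'$-totally geodesic, so the zero section and the sub-bundles corresponding to each $X_i$ are carried to the correct submanifolds, matching $\nu$ on the overlap. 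The step I expect to be the main obstacle is the gluing of $\exp_{\mu'}$ to $\nu$ across the collar around the deeper strata: one must arrange the metric modification and the identification $NX_J\cong N_MX_J$ to be simultaneously compatible with the already-fixed values of $\nu$ and with the plumbing identifications $\Phi_I^J,\psi_I^J$ of Definition \ref{plumdata}, and checking that the partition-of-unity interpolation preserves total geodesy of $X_J$ (hence the exact agreement on the overlap) is the delicate point.
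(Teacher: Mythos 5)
Your overall framing (the modification can be supported away from the deeper strata, where $\mu=\nu_*\lambda$ is already adapted) matches the paper's, but your one-shot construction has a genuine gap at its central claim: making $X_J=X_{J_{s+1}}$ totally geodesic for $\mu'$ does \emph{not} give $\tilde\nu(X_i)=X_i$. Inside the plumbing, the new piece $NX_J$ contains, for each $i\in J$, the sub-bundle $N_{X_i}X_J=\bigoplus_{j\in J\setminus\{i\}}NX_j|_{X_J}$, which the plumbing identifies with a neighborhood of $X_J$ in $X_i$; so the requirement $\tilde\nu(X_i)=X_i$ forces $\exp_{\mu'}$, composed with your identification $NX_J\cong N_MX_J$, to carry each such sub-bundle into the submanifold $X_i\subset M$. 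That needs two things you never arrange: the identification $\ell:NX_J\rightarrow TM|_{X_J}$ must map $N_{X_i}X_J$ into $TX_i$, and each $X_i$ with $i\in J$ --- not merely $X_J$ --- must be totally geodesic for $\mu'$ near $X_J$, so that geodesics issuing from $X_J$ with velocity tangent to $X_i$ stay in $X_i$. Total geodesy of $X_J$ alone is irrelevant here (the zero section lands in $X_J$ by the very definition of the normal exponential). This is exactly why the paper does not extend in one shot: it runs an inner induction over the elements $t\in J_{s+1}=\{1,\ldots,k-1\}$, at each stage using a twisted exponential $\exp_{\mu_t}^{\ell}$ with the condition $\ell(N_{X_i}X_{t+1})\subset TX_i$ imposed explicitly, with the inductive property that $X_1,\ldots,X_t$ are already geodesic for $\mu_t$, and with the next metric defined as the pushforward $(\nu_{t+1})_*\lambda$, so that geodesy of the already-handled submanifolds holds automatically in the region where it is invoked.

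A second, related problem is your construction of $\mu'$ by partition-of-unity interpolation between $\mu$ and a bundle-type metric near $X_J$: the paper warns precisely against this (remark and example following Proposition \ref{smoothembedd}), since interpolating metrics destroys geodesics. In your transition zone along $X_J$ there is therefore no reason $X_J$ (let alone the $X_i$'s) remains totally geodesic, which also undercuts your claimed agreement of $\exp_{\mu'}$ with $\nu$ on the overlap. Lemmas \ref{bundlemetric} and \ref{plumbmetric} evade this by interpolating the defining data (base metric, fibrewise linear metric, connection) rather than the metrics themselves, and the paper's proof of the present lemma avoids interpolation in the critical region altogether: $\mu_{t+1}$ equals a pushforward of $\lambda$ wherever any geodesic property is used, and is arbitrary elsewhere. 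To repair your argument you would need to build $\mu'$ and $\ell$ so that, near $X_J$, every $X_i$ with $i\in J$ is totally geodesic and $\ell(N_{X_i}X_J)\subset TX_i$, compatibly with the already-fixed values of $\nu$; accomplishing this for all $i\in J$ simultaneously is essentially the content of the paper's inner induction, so the ``delicate point'' you flag is not a technicality but the actual substance of the lemma.
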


\begin{proof}
Up to permuting the indexes, we can assume that $J_{s+1}=\{1,\ldots,k-1\}$.
We will have to proceed by induction and construct, for each $0\leq t\leq k-1$, a metric $\mu_t$ on $M$ and a map $\nu_t:N((\bigcup_{\#I=k} X_I)\cup(\bigcup_{i=1}^s X_{J_i})\cup X_t)$ such that:
\begin{itemize}
\item $\nu_t(X_i)=X_i$ for all $i$ in a neighborhood of $(\bigcup_{\#I=k} X_I)\cup(\bigcup_{i=1}^s X_{J_i})$;
\item $\nu_t(X_i)=X_i$ for $0\leq i\leq t$ in a neighborhood of $X_{J_{s+1}}$.
\item $\nu_t^*(\mu_t)=\lambda$.
\end{itemize}
The starting point is $\mu_0=\mu$, $\nu_0=\nu$. The end point is $\tilde\mu:=\mu_{k-1}$ and $\tilde\nu=\nu_{k-1}$. Given $\mu_t,\nu_t$ we can construct a map $\rho:NX_{t+1}\rightarrow M$ by choosing a bundle map $\ell: NX_{t+1}\rightarrow TM|_{X_{t+1}}$ and considering 
$exp_{\mu_t}^{\ell}:NX_{t+1}\rightarrow M$ (see remark at the end of this proof); we make sure to pick $\ell$ so that it agrees with $(d\nu_t)|_{\mathcal{N_{t+1}}}$\footnote{Let $\mathcal M= N((\bigcup_{\#I=k} X_I)\cup(\bigcup_{i=1}^s X_{J_i})\cup X_t)$. Above, $\mathcal N_{t+1} $ refers to $ \mathcal N_{t+1}=N_{\mathcal M}X_{t+1}$, which is naturally identified with a subbundle of $T{\mathcal M}X_{t+1}$ by considering orthogonal directions with respect to $\mu_t$ } in a neighborhood of $(\bigcup_{\#I=k} X_I)$, and $\ell(N_{X_i}X_{t+1})\subset TX_i$. Let 
\[
 \nu_{t+1} =
  \begin{cases} 
      \hfill exp_{\mu_t}^{\ell} \hfill & \text{ on } NX_{t+1} \\
      \hfill \nu_t \hfill & \text{ on } N((\bigcup_{\#I=k} X_I)\cup(\bigcup_{i=1}^{s+1} X_{J_i}))\\
      \hfill id \hfill & \text{ on }X_{t+1}\\
  \end{cases}
\]
and consider the metric
\[
 \nu_{t+1} =
  \begin{cases} \hfill (\nu_{t+1})^*(\lambda) \hfill & \text{ on the image of } \nu_t \\
\hfill \text{} \hfill & \text{any metric on the rest of } M\\
  \end{cases}
\]

We can check that this is well defined: close to $(\bigcup_{\#I=k} X_I)\cup(\bigcup_{i=1}^{s+1} X_{J_i})$, we know that $\mu_t=(\nu_t)_*(\lambda)$ and $\ell$ is the orthogonal splitting due to $\mu_t$, so $exp_{\mu_t}^{\ell}=exp_{\mu_t}=\nu_t\circ exp_\lambda=\nu_t$ because $\lambda$ has the property that $exp_\lambda=id$. So $\nu_{t+1}$ is well defined. Define $\mu_{t+1}=(\nu_{t+1})_*(\lambda)$ Moreover:
\begin{itemize}
\item In a neighborhood of $(\bigcup_{\#I=k} X_I)\cup(\bigcup_{i=1}^{s+1} X_{J_i})$, $\nu_{t+1}=\nu_t$ so by induction $\nu_{t+1}(X_i)=X_i$ for all $i$;
\item in a neighborhood of $X_{J_{s+1}}$, by inductive hypotheses $X_i$ are geodesic with respect to $\mu_t$ for $0\leq i\leq t$; moreover, $\ell(N_{X_i}X_{t+1})\subset TX_i$, therefore for $0\leq i\leq t$, $\mu_{t+1}(X_i)=exp_{\mu_t}^{\ell}(X_i)\subset X_i$; also, by construction $\mu_{t+1}(X_t+1)=X_{t+1}$;
\item $\nu_{t+1}^*(\mu_{t+1})=\lambda$ by construction.
\end{itemize}

\end{proof}

\begin{rmk}
The map $exp_{\mu}:NX\rightarrow M$ for a submanifold $X$ is defined after identifying $NX$ with a subbundle of $TM|_X$; while the usual choice of identification is the one where one identifies $NX$ with the subbundle of $TM$ made of vectors orthogonal to $TX$, any other bundle map $\ell: NX\rightarrow TM|_X$ that yields a splitting $TM\cong TX\oplus \ell(NX)$ can also be used to define a map $exp_{\mu}^{\ell}:NX\rightarrow M$.
\end{rmk}

\begin{prop}\label{smoothembedd}
Given the data of a plumbing of submanifolds $\{X_i\}_{i\in\mathcal{I}}$ inside a manifold $M$, there exists a smooth embedding $F:N(\bigcup_i X_i)\rightarrow M$ such that $F|_{X_i}=id$ for all $X_i$.
\end{prop}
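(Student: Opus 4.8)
The plan is to construct the smooth embedding $F$ by a finite induction on the depth of the intersection strata, gluing together exponential-type maps that are guaranteed to agree on overlaps by the plumbing compatibility conditions. The guiding idea is that Proposition \ref{smoothembedd} should follow by iterating Lemma \ref{smoothembedlemma}: that lemma lets us promote a map defined on the normal bundle of the deepest strata (plus some codimension-$(k-1)$ pieces) to one that also covers an additional codimension-$(k-1)$ stratum, while keeping the metric-matching property $\tilde\nu^*\tilde\mu=\lambda$ and the fixing property $\tilde\nu(X_i)=X_i$. So the first step is to fix, via Lemma \ref{plumbmetric}, a plumbing metric $\lambda$ on $P=N(\bigcup_i X_i)$ for which $\exp_\lambda=\mathrm{id}$ on each $NX_i$ (and on each $NX_I$), and to choose any background metric $\mu$ on $M$.

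Next I would set up the outer induction on $k=\#I$, running downward from the depth $d$ of the deepest intersection to $k=1$. At the base case $k=d$, the deepest strata $X_I$ are disjoint closed submanifolds, so I can simply take $\nu$ to be the ordinary exponential map $\exp_\mu$ on $\bigcup_{\#I=d}NX_I$ and adjust the metric $\mu$ near these strata so that $\nu^*\mu=\lambda$; since the $X_I$ are disjoint this is just the standard tubular neighborhood construction applied one stratum at a time, and $\nu(X_I)=X_I$ holds trivially. For the inductive step at level $k$, assume I have a map $\nu$ defined on $N\big((\bigcup_{\#I=k}X_I)\cup \text{(some codim-$(k-1)$ strata)}\big)$ with the two required properties. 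I then invoke Lemma \ref{smoothembedlemma} repeatedly, once for each codimension-$(k-1)$ stratum $X_{J}$ not yet covered, each time enlarging the domain by one more $X_J$ while preserving $\nu^*\mu=\lambda$ and $\nu(X_i)=X_i$. After exhausting all $\#J=k-1$ strata, the map is defined on $N\big(\bigcup_{\#I\ge k-1}X_I\big)$, which feeds the next round of the outer induction at level $k-1$. When $k$ reaches $1$, the domain is all of $N(\bigcup_i X_i)=P$, giving the desired $F$ with $F^*\mu=\lambda$, hence in particular $F|_{X_i}=\mathrm{id}$.

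The last point to address is why $F$ is an \emph{embedding} rather than merely an immersion. Along each $X_i$ the map $F$ restricts to the identity, and because $F^*\mu=\lambda$ with $\exp_\lambda=\mathrm{id}$, the differential $dF$ along the zero section $\bigcup_i X_i$ is the identity on $TM|_{\bigcup_i X_i}$. Thus $F$ is a local diffeomorphism in a neighborhood of the zero section, and after shrinking the plumbing to a sufficiently small neighborhood of $\bigcup_i X_i$ it becomes a genuine embedding by a standard compactness/tubular-neighborhood argument (injectivity away from the common zero section follows from local injectivity plus the fact that distinct strata are separated outside their mutual intersections).

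The main obstacle I anticipate is purely bookkeeping: verifying that the patched map $\nu_{t+1}$ built from $\exp^{\ell}_{\mu_t}$ on the new piece $NX_{t+1}$, and the old $\nu_t$ on the previously-covered region, actually agree on the overlap so as to be well defined and smooth. This is exactly where the plumbing compatibility relations $\Phi^{I''}_{I'}\circ\Phi^{I'}_{I}=\Phi^{I''}_{I}$ and the matching of retractions must be used: one needs the bundle map $\ell$ to be chosen so that it simultaneously equals $d\nu_t$ near the already-handled strata \emph{and} carries $N_{X_i}X_{t+1}$ into $TX_i$, and then the property $\exp_\lambda=\mathrm{id}$ forces $\exp^{\ell}_{\mu_t}=\nu_t$ on the overlap. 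Ensuring these choices are globally consistent across all strata simultaneously, rather than just pairwise, is the delicate part, but it is precisely what the careful compatibility bookkeeping in Lemma \ref{smoothembedlemma} has been engineered to deliver.
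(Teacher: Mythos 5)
Your proposal follows essentially the same route as the paper: the paper's proof is exactly the double induction you describe (reverse induction on the stratum depth $k$, with an inner induction adjoining one codimension-$(k-1)$ stratum at a time), with Lemma \ref{smoothembedlemma} supplying each inner step and the plumbing metric $\lambda$ of Lemma \ref{plumbmetric} fixed at the outset; the only cosmetic differences are that the paper starts the induction vacuously at $k=\#\mathcal I+2$ rather than constructing the deepest level by hand, and that you spell out the local-isometry/shrinking argument for $F$ being an embedding, which the paper leaves implicit. One small caveat: the pointwise identity $F|_{X_i}=\mathrm{id}$ does not follow from $F^*\mu=\lambda$ alone (a local isometry could move points within $X_i$); it holds because each map produced by Lemma \ref{smoothembedlemma} is defined to be the identity on the new stratum and agrees with the previously constructed map near the old ones.
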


\begin{proof}
By induction, for all $1\leq k \leq \#\mathcal I +2$, for all $0\leq s\leq n=\#\{J\subseteq \mathcal I \text{ such that } \# J=k-1, X_{J_i}\neq \emptyset \}$ we'll construct the following:
\begin{itemize}
\item $\mu_{k,s}$ a metric on $M$;
\item $\nu_{k,s}:N((\bigcup_{\#I=k} X_I)\cup(\bigcup_{i=1}^s X_{J_i}))\rightarrow M$ (where $\#J_i=k-1$);
\end{itemize}
such that:
\begin{itemize}
\item $\nu_{k,s}(X_i)=X_i$ for all $i$ (notice that $N((\bigcup_{\#I=k} X_I)\cup(\bigcup_{i=1}^s X_{J_i}))\subset N(\bigcup_i X_i)$, so $N_{X_j}X_{I}$ is identified with $X_j$ in $N((\bigcup_{\#I=k} X_I)\cup(\bigcup_{i=1}^s X_{J_i}))$);
\item $\nu_{k,s}^*(\mu)=\lambda$.
\end{itemize}

We'll proceed by reverse induction on $k$ and for each fixed $k$ we'll proceed by induction on $s$.
The base case is thus $k=\#\mathcal I+2$, so that $n=0$ and $(\bigcup_{\#I=k} X_I)\cup(\bigcup_{i=1}^s X_{J_i})=\emptyset$. Therefore $\mu_{\#\mathcal I+2, 0}$ can be any metric on $M$.

Let's assume by induction that $\mu_{k,s}$ and $\nu_{k,s}$ have been constructed. If $s=n_k$, then proceed to define $\mu_{k-1,0}:=\mu_{k,n_k}$ and $\nu_{k-1,0}:=\nu_{k,n_k}$. Otherwise, we can find $J_{s+1}$ such that $\# J_{s+1}=k-1$ and $J_{s+1}\neq J_i$ for $i\leq s$. We can then apply Lemma \ref{smoothembedlemma} to build $\mu_{k,s+1}$ and $\nu_{k,s+1}$. This concludes the induction.

The end point of the induction produces the map $\nu_{1,0}:N(\bigcup_i X_i)\rightarrow M$ such that $\nu_{1,0}(X_i)=X_i$ for all $i$ and this concludes the proof.

\end{proof}

\begin{rmk}
It is tempting to try and write a much simpler proof of the above proposition by interpolating metrics that can be constructed locally; unfortunately this doesn't work because interpolation of metrics in general doesn't preserve geodesics.
In other words, if $\gamma$ is a geodesic in $M$ with respect to both $\mu_1, \mu_2$, and if $\mu$ is some interpolation of $\mu_1$ and $\mu_2$, then $\gamma$ need not be a geodesic for $\mu$. Similarly, if $X$ is a totally geodesic submanifold with respect to different metrics, the property may be lost when interpolating with respect to a partition of unity.
\end{rmk}

\begin{ex}
An easy counterexample is the following: let $M=\R^2$, consider the standard metric $\mu_1=dx^2+dy^2$ and a  scaled version $\mu_2=\frac{1}{10}\mu_1$. Consider a partition of unity $t_1,t_2:\R^2\rightarrow \R$ with respect to the two open sets: $\mathcal U_1=\{(x,y)\in \R^2| x^2+y^2<1\}$ and $\mathcal U_2=\{(x,y)\in \R^2| x^2+y^2>\frac{1}{2}\}$, i.e. $t_1+t_2=1$ and $t_i=0$ outside of $\mathcal U_i$. Let $\mu=t_1\mu_1+t_2\mu_2$.

Consider $\gamma=[-1,1]\times \{0\}\subset \R^2$. This is a geodesic for both $\mu_1,\mu_2$ but not for $\mu$. 
\end{ex}

We can now prove that plumbings exist:
\begin{prop}\label{smoothplumbingsexist}
Given a finite family of smooth submanifolds $\{X_i\}_{i\in\mathcal I}$ of $M$, there exist plumbing data. Therefore one can build a plumbing, and such plumbing is diffeomorphic to a neighborhood of $\bigcup_i X_i$ in $M$. 
\end{prop}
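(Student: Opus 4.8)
The plan is to reduce the statement to two ingredients: first, that plumbing data in the sense of Definition \ref{plumdata} can be manufactured from a single Riemannian metric on $M$ adapted to the whole configuration; and second, that Proposition \ref{smoothembedd} then upgrades such data into the desired diffeomorphism. The only genuinely new content here is the \emph{existence} of the data, since the manifold structure of the plumbing is already guaranteed by the earlier injectivity lemma, and the embedding is handed to us by Proposition \ref{smoothembedd} once data are available.

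First I would construct a metric $g$ on a neighborhood of $\bigcup_i X_i$ in $M$ so that every intersection stratum $X_I$ is totally geodesic, the normal decomposition $NX_I=\bigoplus_{i\in I}NX_i|_{X_I}$ of Definition \ref{transverse} is $g$-orthogonal, and $g$ is locally an iterated bundle (product-type) metric near each stratum in the sense of Lemma \ref{bundlemetric}. Such a metric is built by reverse induction on the depth $k=\#I$: one starts from an arbitrary metric on the deepest (smallest) strata and extends outward one codimension at a time, at each stage gluing in a fibrewise-linear bundle metric on the new normal directions so that the previously-treated strata stay totally geodesic and the splitting stays orthogonal. This is exactly the adapted metric invoked as Corollary \ref{totallygeodesicmetric}. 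I expect this to be the main obstacle: transversality is precisely what permits the successive orthogonal extensions, and the bookkeeping needed to keep \emph{all} strata simultaneously totally geodesic with the correct local product structure — rather than merely pairwise — is the technical heart of the argument.

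Given $g$, I would define the tubular neighborhood maps by $\psi_I^{I'}:=\exp_g|_{N_{X_{I'}}X_I}$; because $X_{I'}$ is totally geodesic and $N_{X_{I'}}X_I\subset TX_{I'}$, these geodesics remain in $X_{I'}$, and $\psi_I^{I'}$ restricts to the identity on the zero section $X_I$. The induced retractions $\rho_I^{I'}$ are the $g$-nearest-point projections, and I would build the bundle isomorphisms $\Phi_I^{I'}$ from $g$-parallel transport of $NX_{I'}|_{X_I}$ along the normal geodesics, composed with $(\psi_I^{I'},\mathrm{id})$ as prescribed in Definition \ref{plumdata}. All the compatibility conditions then follow from one geometric fact: for nested totally geodesic strata $X_I\subset X_{I'}\subset X_{I''}$ whose normal bundles split orthogonally and whose metric is locally a product, the normal geodesics and the parallel transports decompose, so that passing from $X_{I''}$ to $X_I$ directly agrees with passing via $X_{I'}$. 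Concretely this yields the retraction cocycle $\rho_I^{I'}\circ\rho_{I'}^{I''}=\rho_I^{I''}$ together with the two identities $\psi_I^{I''}=\psi_{I'}^{I''}\circ\Phi_I^{I'}|_{X_{I''}}$ and $\Phi_I^{I''}=\Phi_{I'}^{I''}\circ\Phi_I^{I'}$, establishing that $(\psi_I^{I'},\Phi_I^{I'})$ is a system of compatible plumbing data.

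Finally I would assemble the statement. With plumbing data in hand, the earlier injectivity lemma makes $P:=N(\bigcup_i X_i)=\bigcup_i NX_i/\!\sim$ a smooth manifold, and Proposition \ref{smoothembedd} produces a smooth embedding $F:P\to M$ with $F|_{X_i}=\mathrm{id}$ for all $i$. Since each $NX_i$ is the total space of a normal bundle we have $\dim NX_i=\dim M$, hence $\dim P=\dim M$; as $F$ is an embedding between equidimensional manifolds, invariance of domain shows that $F(P)$ is open. Because $F(P)$ contains $\bigcup_i X_i$, shrinking $P$ to a neighborhood of its zero section realizes $F$ as a diffeomorphism of $P$ onto an open neighborhood of $\bigcup_i X_i$ in $M$, which is the desired conclusion.
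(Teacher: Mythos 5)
Your overall architecture (reverse induction on the depth of strata, exponential maps of an adapted metric giving the $\psi_I^{I'}$, then Proposition \ref{smoothembedd} plus invariance of domain to finish) is close in spirit to the paper's, and your final assembly step is correct. The genuine gap is in the step you yourself flag as the main obstacle: the construction of a single metric $g$ on a neighborhood of $\bigcup_i X_i$ in $M$ making all strata simultaneously totally geodesic with orthogonal, locally product-type normal structure. Your sketch --- extend outward one codimension at a time, ``gluing in a fibrewise-linear bundle metric on the new normal directions'' --- is precisely the naive patching that the paper explicitly warns against in the remark and counterexample following Proposition \ref{smoothembedd}: interpolating metrics by a partition of unity does not preserve geodesics, so the previously-treated strata do not ``stay totally geodesic'' through the gluing. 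Moreover, to even write down a bundle-type metric near a stratum $X_I$ inside $M$ you need tubular coordinates identifying a neighborhood of $X_I$ with $NX_I$, and for the local product structures at different strata to agree on overlaps those tubular coordinates must already be compatible across strata --- which is exactly the plumbing data you are trying to construct. Note also that the adapted metric you identify with Corollary \ref{totallygeodesicmetric} is, in the paper's logical order, a \emph{corollary} of this very proposition, not an available input.

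The paper escapes this chicken-and-egg by never interpolating metrics on $M$: it builds bundle metrics on the normal bundles and plumbings, where all patching happens at the level of fibrewise metrics and connections (Lemmas \ref{bundlemetric}, \ref{extendplumbmetric}, \ref{plumbmetric}) --- interpolating \emph{that} data is harmless, since any connection together with any fibrewise linear metric yields a bundle metric for which the zero section and the fibres are automatically totally geodesic --- and it then transfers these metrics to the target manifold by pushforward along successively constructed embeddings (Lemma \ref{smoothembedlemma}), redefining the ambient metric at each step rather than gluing it. The induction in the paper's proof of Proposition \ref{smoothplumbingsexist} therefore interleaves the two constructions: at each depth, the already-built bundle isomorphisms let one form the plumbing of the deeper strata inside each $X_H$, Proposition \ref{smoothembedd} embeds that partial plumbing into $X_H$, and this embedding is what furnishes the tubular maps $\psi_I^H$ compatibly with the rest of the data. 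To salvage your ``one global adapted metric first'' plan you would have to rebuild essentially this machinery; as written, the key step of your proposal is asserted rather than proved, and the most natural way to carry it out is the one the paper shows to fail.
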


\begin{proof} 
We will prove, by induction, that for any $0\leq k\leq n=\#\mathcal I$ there exist systems of tubular neighborhoods $\psi_I^{I'}$ for $I'\subset I$ and $\# I'\geq k$ and compatible systems of bundle isomorphisms $(\Phi_I^{I'})_J:N_{X_J}X_I\rightarrow N_{X_J}X_{I'}$ for $J\subset I'\subset I$, $\#J\geq k$. The starting point is when $k$ is the deepest level of intersection. In that case, the system of tubular neighborhoods is given by any choice of tubular neighborhood, and there are no bundle isomorphisms. By induction, let's assume the existence of a system of tubular neighborhoods and bundle isomorphisms for a fixed $k$. Let $H\subset J$ where $\#J=k, \#H=k-1$. Consider $\rho_I^{I'}$ for some $I'\subset I$, $\#I'=k$, $\#I=k+1$ and pick a bundle isomorphism $(\tilde\Phi_I^{I'})_H:\rho^*(N_{X_H}X_{I'}|_I)\rightarrow N_{X_H}X_{I'}|_{\mathcal U_I^{I'}}$ extending the isomorphisms $(\tilde\Phi_I^{I'})_J:\rho^*(N_{X_J}X_{I'}|_I)\rightarrow N_{X_J}X_{I'}|_{\mathcal U_I^{I'}}$ defined for all $J\supset H$. This yields bundle isomorphisms $(\Phi_I^{I'})_H:N_{X_H}X_I\rightarrow N_{X_H}X_{I'}$, compatible with the rest of the data. This allows us to define the plumbing of the higher intersections inside $X_H$, and find an embedding $\phi:N(\bigcup_{J\supset H} X_J)\rightarrow X_H$ (this is possible by Proposition \ref{smoothembedd}). Such an embedding yields a system of tubular neighborhoods $\psi_I^{H}$ compatible with the previous data. This concludes the induction. When $k=0$, we obtain plumbing data for $X_i$'s in $M$.
\end{proof}
\begin{cor}\label{totallygeodesicmetric}
Given transverse submanifolds $\{X_i\}_{i\in \mathcal I}$ in $M$, one can find a metric $\mu$ on $M$ such that each $X_i$ is geodesic and, for all $i,j$, the image of ${exp_{\mu}}|_{N_{X_j} X_i}:N_{X_j}X_i \rightarrow M$ in a neighborhood of $X_{ij}$ is contained in $X_j$.
\end{cor}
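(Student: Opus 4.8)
The plan is to reduce both requirements to a single one: the existence of a metric $\mu$ on $M$ for which every $X_i$ is simultaneously totally geodesic. The second condition will then be a formal consequence of the first. Indeed, near $X_{ij}$ the bundle $N_{X_j}X_i$ is the normal bundle $N_{X_j}X_{ij}$ of $X_{ij}$ inside $X_j$ (since $X_i\cap X_j=X_{ij}$), and by transversality its fibres sit inside $TX_j$. Hence for $p\in X_{ij}$ and $v\in (N_{X_j}X_{ij})_p\subset T_pX_j$ the geodesic $t\mapsto exp_\mu(p,tv)$ has initial data tangent to $X_j$; if $X_j$ is totally geodesic this geodesic stays in $X_j$, so $exp_\mu$ maps a neighborhood of the zero section of $N_{X_j}X_i$ into $X_j$, which is exactly the claim.

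It remains to produce a metric making all the $X_i$ totally geodesic at once. First I would invoke Proposition \ref{smoothplumbingsexist} to obtain plumbing data for $\{X_i\}_{i\in\mathcal I}$, together with the smooth embedding $F:P=N(\bigcup_i X_i)\rightarrow M$ of Proposition \ref{smoothembedd}, which satisfies $F|_{X_i}=id$ and carries $P$ diffeomorphically onto a neighborhood of $\bigcup_i X_i$. Next I would apply Lemma \ref{plumbmetric} to equip the plumbing $P$ with a metric $\lambda$ for which each zero section $X_i\subset NX_i\subset P$ is totally geodesic (this is precisely the property recorded in that lemma, obtained because $\lambda|_{NX_i}$ is a bundle metric in the sense of Lemma \ref{bundlemetric}). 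I would then define $\mu=F_*\lambda$ on the open set $F(P)\subset M$ and extend it to a global metric on $M$ by an arbitrary partition-of-unity completion away from $\bigcup_i X_i$, where no condition is imposed.

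By construction $F$ is an isometry from $(P,\lambda)$ onto $(F(P),\mu)$ and fixes each $X_i$ pointwise, so $F$ sends the totally geodesic submanifold $X_i\subset(P,\lambda)$ to the submanifold $X_i\subset (F(P),\mu)$; since being totally geodesic is an isometry invariant, each $X_i$ is totally geodesic for $\mu$. Combined with the reduction of the first paragraph, this gives both conclusions. The only point requiring care is that ``totally geodesic'' is not preserved under interpolation of metrics (cf. the remark and counterexample preceding this corollary), so the extension of $\mu$ to all of $M$ must coincide with $F_*\lambda$ on a whole neighborhood of $\bigcup_i X_i$, rather than being blended with an auxiliary metric there; away from the submanifolds the extension is unconstrained and causes no difficulty.
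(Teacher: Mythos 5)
Your overall construction (embed the plumbing via Propositions \ref{smoothplumbingsexist} and \ref{smoothembedd}, push the plumbing metric $\lambda$ of Lemma \ref{plumbmetric} forward by $F$, and extend it over $M$ without blending near $\bigcup_i X_i$) is exactly the paper's proof. However, the reduction in your first paragraph --- that the second condition is a formal consequence of every $X_i$ being totally geodesic --- is false, and this is a genuine gap in your verification. The map $exp_\mu|_{N_{X_j}X_i}$ is the restriction of the tubular neighborhood map $exp_\mu : N_MX_i\cong (TX_i)^{\perp_\mu}\rightarrow M$ of $X_i$; this is how the corollary is used in the proof of Lemma \ref{lemma1}, where the scaling maps $\phi_t(exp_k(q,v))=exp_k(q,tv)$ must carry $X_j\cap\mathcal N_k$ to itself. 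The identification of $N_{X_j}X_{ij}$ with $N_MX_i|_{X_{ij}}$ is the transversality isomorphism, not a realization of $N_{X_j}X_{ij}$ as a subbundle of $TX_j$; so the initial velocities of the relevant geodesics lie in $(TX_i)^{\perp_\mu}|_{X_{ij}}$, and for their image to lie in $X_j$ one needs the extra condition $(TX_i)^{\perp_\mu}|_{X_{ij}}\subset TX_j|_{X_{ij}}$, an orthogonality requirement at the intersection which total geodesy does not supply. Concretely: in $M=\R^2$ with the Euclidean metric, let $X_1$ be the $x$-axis and $X_2$ the line $\{y=x\}$. Both are totally geodesic, yet the normal exponential of $X_1$ at the origin sweeps out the $y$-axis, which is not contained in $X_2$; correspondingly the scaling maps $\phi_t$ of Lemma \ref{lemma1} do not preserve $X_2$, and that proof would break. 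So a metric making all $X_i$ totally geodesic need not satisfy the corollary.

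The good news is that your metric $\mu=F_*\lambda$ does satisfy the missing condition; you just need to check it directly rather than through the faulty reduction. Inside the plumbing, near $X_{ij}$ the submanifold $X_j$ coincides with $NX_i|_{X_{ij}}$, the union of the fibres of $NX_i$ over points of $X_{ij}$ (this is how the gluing along $NX_{ij}\cong NX_i|_{X_{ij}}\oplus NX_j|_{X_{ij}}$ identifies the zero section of $NX_j$ with a subset of $NX_i$). Since $\lambda|_{NX_i}$ is a bundle metric, these fibres are $\lambda$-orthogonal to $X_i$ along the zero section, and $exp_\lambda=id:NX_i\rightarrow NX_i\subset P$ by Lemma \ref{plumbmetric}; hence the normal exponential of $X_i$ over $X_{ij}$ maps onto exactly these fibres, i.e.\ into $X_j$. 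Transporting this through the isometry $F$, which fixes each $X_i$ pointwise and intertwines normal bundles and exponential maps, gives the second condition for $\mu$, and your extension argument then completes the proof.
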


\begin{rmk}
The corollary has no claim of originality. In \cite{Milnor} for instance, a nice proof of this fact is presented for the intersection of two manifolds (the proof is attributed to E. Feldman). Such proof only applies to n=2 submanifolds, but it can easily be extended to general n by induction.

The reason to present this result as corollary, is that the result of Proposition \ref{smoothplumbingsexist} is slightly stronger than the corollary, and will be needed in the rest of the discussion.
\end{rmk}

\begin{proof}
Consider an embedding $\rho$ of the plumbing inside $M$. Then let $\mu=\nu_*(\lambda)$ be the pushforward of the plumbing metric. Then $\mu$ can be extended to a metric on all of $M$ which satisfies the requirements. 
\end{proof}

\section{Rigid plumbing: definition and properties}

In order to pursue an analogue of Theorem \ref{snt} one needs to:
\begin{enumerate}[i)]
\item define a standard symplectic model $(N(\bigcup_i X_i),\omega)$ for a family of crossing submanifolds (an analogue of the symplectic normal bundle for a single submanifold);
\item find an embedding $f: (N(\bigcup_i X_i),\omega)\rightarrow (M,\omega_M)$ such that $f^*(\omega_M)=\omega$ along $\bigcup_i X_i$. 
\end{enumerate}

Part i) is achieved in Section \ref{symplecticformonplum}. 

Part ii) requires that the plumbing embeds in a more rigid way, which is what we explore in this section.

In view of trying to embed the normal bundles into $M$ symplectically, we need to keep track of the orthogonal directions in $M$.
In terms of differential geometry, this can be done by introducing maps $g_i:N_M(X_i)\rightarrow TM|_{X_i}$, left inverses of the quotient map $TM|_{X_i}\rightarrow TM|_{X_i}/TX_i$, so that they yield a splitting $ TM|_{X_i}\cong  TM|_{X_i}\oplus g_i(N_M(X_i))$. We will then look for embeddings $f_i:NX_i\rightarrow M$ such that the orthogonal direction in $NX_i$ gets mapped to $TM$ via $g_i$; this isn't hard to do. It becomes hard if we require such embeddings $f_i$ to also agree on the plumbing. Such $f_i$'s can't be built in general for any choice of plumbing and $g_i$'s, unless we make some more compatibility assumptions.

\begin{defn}\label{compatiblesplitting}
A \textbf{splitting} on a finite family of transverse submanifolds $\{X_i\}_{i\in\mathcal I}$ is a collection of maps $\{g_i:N_M(X_i)\rightarrow TM|_{X_i}\}_{i\in\mathcal I}$, each of them yielding a splitting $ TM|_{X_i}\cong  TM|_{X_i}\oplus N_M (X_i)$.

A splitting is \textbf{compatible} with the data of a plumbing and an embedding 

$\nu:N(\bigcup_i X_i)\rightarrow M$ if:\begin{itemize}
\item $g_i|_{N_{X_i}X_{ij}}=d_{X_{ij}}\psi_{ij}^i:N_{X_i}X_{ij}\rightarrow TX_i\subset TM|_{X_i}$ for all $i$;
\item ${d_{X_j}d_{X_i}\nu }^{-1} \circ d_{N_{X_j}X_{ij}} (g_i\circ\Phi_{ij}^i)={d_{X_i}d_{X_j}\nu }^{-1} \circ d_{N_{X_i}X_{ij}} (g_j\circ\Phi_{ij}^j)$:

\ $T_{X_j}(T_{X_i}NX_{ij})\rightarrow T_{X_j}(T_{X_i}NX_{ij})$ 
 \ \ \ for all $i,j$ (see remark).

A splitting is \textbf{compatible} with the data of a plumbing if it is compatible with one, and therefore every, embedding of the plumbing (see Lemma \ref{compatibilityinvariance}).
 
\end{itemize}
\end{defn}

\begin{rmk}
The definition above depends on an identification
$T_{X_i}(T_{X_j}NX_{ij})\leftrightarrow T_{X_j}(T_{X_i}NX_{ij})$ which is natural since $NX_{ij}\cong (NX_i\oplus NX_j)|_{X_{ij}}$. 
\end{rmk}

\begin{lemma}\label{compatibilityinvariance}
Consider a splitting on $\{X_i\}_i\in \mathcal I$, the data of a plumbing, and two different embeddings $\nu, \mu: N(\bigcup_i X_i)\rightarrow M$. If the splitting is compatible with $\nu$, then it is compatible with $\mu$.
\end{lemma}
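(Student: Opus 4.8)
The plan is to separate the one genuinely $\nu$-dependent part of Definition \ref{compatiblesplitting} from the rest. The first bullet, $g_i|_{N_{X_i}X_{ij}} = d_{X_{ij}}\psi_{ij}^i$, refers only to the splitting maps $g_i$ and the tubular-neighborhood maps $\psi_{ij}^i$ of the plumbing data, so it is literally independent of the embedding and holds for $\mu$ exactly when it holds for $\nu$. Everything therefore reduces to showing that the second bullet is preserved under the change $\nu \rightsquigarrow \mu$.

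First I would record a first-order rigidity fact. By Proposition \ref{smoothembedd} every embedding of the plumbing is the identity on each $X_i$, so for $p\in X_{ij}$ the derivative $d\nu_p$ is the identity on $T_pX_i$ and on $T_pX_j$. Transversality gives $T_pX_i = T_pX_{ij}\oplus N_pX_j$ and $T_pX_j = T_pX_{ij}\oplus N_pX_i$, whence $T_pX_i + T_pX_j = T_pM$, and therefore $d\nu_p = \mathrm{id}_{T_pM}$; the same holds for $\mu$. Consequently $h := \mu\circ\nu^{-1}$ is a local diffeomorphism of $M$ fixing each $X_i$ pointwise with $dh = \mathrm{id}$ all along $X_{ij}$.

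The heart of the argument is that the symbols $d_{X_j}d_{X_i}\nu$ are restrictions of the iterated tangent map $T^2\nu = d(d\nu)$, and $T^2$ is a functor: from $\mu = h\circ\nu$ one gets $T^2\mu = T^2h\circ T^2\nu$, so $(d_{X_j}d_{X_i}\mu)^{-1} = (d_{X_j}d_{X_i}\nu)^{-1}\circ (d_{X_j}d_{X_i}h)^{-1}$. The bundle maps $g_i\circ\Phi_{ij}^i$ and $g_j\circ\Phi_{ij}^j$ are untouched by the change of embedding. Thus, writing each side of the second bullet for $\mu$ and conjugating the $T^2h$-correction back to the intrinsic space $T_{X_j}(T_{X_i}NX_{ij})$ by $T^2\nu$, both the $ij$-side and the $ji$-side acquire a correction; these two corrections are interchanged by the natural identification $T_{X_i}(T_{X_j}NX_{ij}) \cong T_{X_j}(T_{X_i}NX_{ij})$ of the remark, because that identification is the canonical flip $\kappa$ of the double tangent bundle and $T^2h$ commutes with $\kappa$ (as does $T^2\nu$). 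Hence the equality of the two sides for $\nu$ is carried to the equality of the two sides for $\mu$.

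The main obstacle is the bookkeeping in this last step: one has to check that the natural identification used in Definition \ref{compatiblesplitting} really is the restriction of the canonical flip, that all the subbundle restrictions ($N_{X_j}X_{ij}\cong NX_i|_{X_{ij}}$, the splitting images, and the $T_{X_i}$- and $T_{X_j}$-directions) are carried to one another compatibly by $\kappa$, and that $dh = \mathrm{id}$ along $X_{ij}$ guarantees that $T^2h$ restricts to an isomorphism of exactly the relevant subbundle. I would carry this out in plumbing coordinates $(z,v_i,v_j)$ near $X_{ij}$, where $\kappa$ is literally the swap of the two normal slots and the functoriality and flip-naturality of $T^2$ become explicit matrix identities.
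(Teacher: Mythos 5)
Your proof is correct and takes essentially the same route as the paper: the paper factors $\mu=\nu\circ\tilde\mu$ with $\tilde\mu=\nu^{-1}\circ\mu$ a self-map of the plumbing, applies the chain rule to the iterated derivatives, and concludes from the symmetry $d_{X_i}d_{X_j}\tilde\mu=d_{X_j}d_{X_i}\tilde\mu$ (Schwarz's theorem, checked in local coordinates) --- which is exactly your correction-plus-flip-naturality argument, with the correction diffeomorphism placed on the plumbing side rather than on the $M$ side. Your preliminary observation that $dh=\mathrm{id}$ along $X_{ij}$ is left implicit in the paper, but it is the same first-order rigidity both arguments rely on.
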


\begin{proof}
Of course the first compatibility condition is independent from $\mu, \nu$.
For the second condition, observe that $\mu=\nu\circ \tilde\mu$ where $\tilde\mu=\nu^{-1}\circ \mu: N(\bigcup_i X_i)\rightarrow N(\bigcup_i X_i)$. We are assuming that ${d_{X_j}d_{X_i}\nu }^{-1} \circ d_{N_{X_j}X_{ij}} (g_i\circ\Phi_{ij}^i)={d_{X_i}d_{X_j}\nu }^{-1} \circ d_{N_{X_i}X_{ij}} (g_j\circ\Phi_{ij}^j)$, then 
$${d_{X_j}d_{X_i}\mu }^{-1} \circ d_{N_{X_j}X_{ij}} (g_i\circ\Phi_{ij}^i)={d_{X_j}d_{X_i}(\nu\circ \tilde\mu) }^{-1} \circ d_{N_{X_j}X_{ij}} (g_i\circ\Phi_{ij}^i)=$$
$$ {d_{X_j}d_{X_i}\tilde\mu }^{-1}{d_{X_j}d_{X_i}\nu }^{-1} \circ d_{N_{X_j}X_{ij}} (g_i\circ\Phi_{ij}^i)$$
and similarly 
$${d_{X_i}d_{X_j}\mu }^{-1} \circ d_{N_{X_i}X_{ij}} (g_j\circ\Phi_{ij}^j)={d_{X_i}d_{X_j}\tilde\mu }^{-1}\circ {d_{X_i}d_{X_j}\nu }^{-1} \circ d_{N_{X_i}X_{ij}} (g_j\circ\Phi_{ij}^j).$$
Since it is always true that $d_{X_i}d_{X_j}\tilde\mu=d_{X_j}d_{X_i}\tilde\mu$ (because this can be checked locally, which is equivalent to reducing to Euclidean space; in Euclidean space, this is Schwarz's) then 
$${d_{X_j}d_{X_i}\nu }^{-1} \circ d_{N_{X_j}X_{ij}} (g_i\circ\Phi_{ij}^i)={d_{X_i}d_{X_j}\nu }^{-1} \circ d_{N_{X_i}X_{ij}} (g_j\circ\Phi_{ij}^j).$$
\end{proof}

\subsection{Rigid plumbing embedding}
Proposition \ref{smoothembedd} can be strengthened to a more rigid setup:

\begin{prop}\label{rigidembedding}
Given the following:
\begin{itemize}
\item a family $\{X_i\}_{i\in\mathcal I}$ of transversely intersecting submanifolds in a smooth manifold $M$;
\item the data of a plumbing $N(\cup_i X_i)$ as in Definition \ref{plumdata};
\item a splitting for the family: $\{g_i:N_M(X_i)\rightarrow TM|_{X_i}\}_{i\in\mathcal I}$ compatible with the plumbing as in Definition \ref{compatiblesplitting}. 
\end{itemize}
Then there exist maps
$$F^i:NX_i\rightarrow M$$
such that:
\begin{enumerate}[(i)]
\item $F^i|_{X_i}=id$;
\item $d_{X_i}F^i=g_i+id:T(NX_i)|_{X_i}\cong NX_i\oplus TX_i\rightarrow TM|_{X_i}$;
\item such $F^i$'s agree with respect to $\sim$.
\end{enumerate}
This last property implies that they descend to a map on the quotient $$F:N(\cup_i X_i)\rightarrow M.$$ 
\end{prop}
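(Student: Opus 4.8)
The goal is to build, for each $i$, a map $F^i:NX_i\rightarrow M$ satisfying (i)-(iii), so the strategy is to reuse the inductive machinery of Proposition \ref{smoothembedd} but carry along the splitting data $\{g_i\}$ as an extra constraint at every stage. Concretely, I would mimic the reverse induction on the intersection depth $k=\#I$ that produced $\nu_{1,0}$, but now require at each step that the derivative of the partially-constructed embedding, restricted to normal directions, realizes the prescribed $g_i$ rather than the metric-orthogonal splitting. The natural tool is the modified exponential $exp_\mu^\ell$ from the remark after Lemma \ref{smoothembedlemma}: instead of identifying $NX_i$ with the metric-orthogonal complement of $TX_i$, I identify it via a bundle map $\ell$ chosen to agree with $g_i$. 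Thus the plan is to run an analogue of Lemma \ref{smoothembedlemma} in which the inductive data is a map $\nu$ together with the requirement $d_{X_i}\nu|_{NX_i}=g_i$ on the relevant strata, and the extension step uses $exp_{\mu_t}^\ell$ with $\ell$ matching $g_{t+1}$ along $X_{t+1}$ and matching the already-built data near the higher intersections.

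The first two properties are then essentially built in: (i) holds because $exp_\mu^\ell$ fixes the zero section, and (ii) holds because we chose $\ell=g_i+id$ by construction on $T(NX_i)|_{X_i}\cong NX_i\oplus TX_i$. The content of the proposition is really property (iii), compatibility with $\sim$. Here is where the hypothesis that the splitting is \emph{compatible} with the plumbing (Definition \ref{compatiblesplitting}) does the work. The first bullet of that definition, $g_i|_{N_{X_i}X_{ij}}=d_{X_{ij}}\psi_{ij}^i$, guarantees that the normal direction $g_i$ pushes $N_{X_i}X_{ij}$ into $TX_i$ in a way compatible with the tubular-neighborhood map used to define the plumbing identifications $\Phi_{ij}^i$. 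I would use this to show that near $X_{ij}$ the maps $F^i\circ\Phi_{ij}^i$ and $F^j\circ\Phi_{ij}^j$ have the same value and the same first-order behavior, and then invoke the second bullet of compatibility (the mixed-derivative condition) to match the second-order behavior transverse to $X_{ij}$, which is exactly the freedom along which the two $exp_\mu^\ell$ constructions could a priori differ.

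The main obstacle I anticipate is exactly the agreement of the two constructions at and near the higher intersections $X_{ijk}$, paralleling the well-definedness check in the plumbing lemma. On $NX_i$ the map is built by exponentiating in $g_i$-directions, while on $NX_j$ it is built by exponentiating in $g_j$-directions; these a priori produce different points in $M$ even when the images ought to coincide under $\sim$. The mixed-derivative compatibility condition ${d_{X_j}d_{X_i}\nu}^{-1}\circ d(g_i\circ\Phi_{ij}^i)={d_{X_i}d_{X_j}\nu}^{-1}\circ d(g_j\circ\Phi_{ij}^j)$, together with Lemma \ref{compatibilityinvariance} (which frees us to check it against any convenient embedding), is precisely what forces the two exponential constructions to agree to the order needed for $\sim$-compatibility. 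I would therefore spend most of the argument verifying that the inductive choices of $\ell$ can be made simultaneously compatible across all pairs meeting at a given stratum, arranging the partition-of-unity extensions (as in Lemma \ref{extendplumbmetric}) so that they do not destroy the agreement already secured at deeper strata. Once the maps agree as bundle maps over the plumbing identifications, descent to $F:N(\cup_i X_i)\rightarrow M$ on the quotient is automatic, and shrinking neighborhoods makes $F$ an embedding since $dF$ is the identity along each $X_i$.
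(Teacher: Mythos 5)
Your plan founders on property (iii), and the failure is structural rather than a matter of detail. Descent to the quotient requires the \emph{exact} identity $F^i\circ\Phi_{ij}^i=F^j\circ\Phi_{ij}^j$ on a whole neighborhood of $X_{ij}$ in $NX_{ij}$, but your mechanism for securing it --- matching the value, the first-order behavior, and then the ``second-order behavior transverse to $X_{ij}$'' via the mixed-derivative condition of Definition \ref{compatiblesplitting} --- only controls finite-order jets along $X_{ij}$. Two exponential-type maps built from different data (geodesics launched in the $g_i$-directions from points of $X_i$, versus geodesics launched in the $g_j$-directions from points of $X_j$) can agree to second order along $X_{ij}$ and still differ at higher order arbitrarily close by; no pointwise condition on derivatives along the intersection can force them to coincide as maps, and coincidence as maps is what $\sim$-compatibility means. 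Note that Lemma \ref{smoothembedlemma} never faces this problem: there the new piece glues to the old one exactly because the metric is the pushforward $(\nu_t)_*(\lambda)$ of the plumbing metric and $\ell$ is taken to agree with $d\nu_t$ on the overlap, so $exp_{\mu_t}^{\ell}=\nu_t\circ exp_\lambda=\nu_t$ \emph{identically} there --- the new exponential is literally the old map, not a separately built map that happens to agree to some order. If you instead force $\ell=g_{t+1}+id$ along all of $X_{t+1}$, you lose this exactness unless $d\nu_t$ already realizes $g$ in the relevant directions near the higher strata; that is a constraint you would have to build into the inductive hypothesis and propagate through every stage, and your proposal does not address it.

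This is precisely why the paper's proof has a different architecture, one that makes (iii) trivial rather than hard. It first invokes Proposition \ref{smoothembedd} to get a map $f$ that already lives on the quotient --- so (i) and (iii) hold from the start --- and then corrects the derivative by post-composing with self-diffeomorphisms $\eta_1,\dots,\eta_n$ of the plumbing, $F=f\circ\eta_1\circ\cdots\circ\eta_n$, built via Lemmas \ref{straightening} and \ref{extendstraightening}; since each $\eta_k$ maps the plumbing to itself, the composition automatically descends. The compatibility of the splitting is spent elsewhere: in Lemma \ref{straighteningonplumbing} it guarantees that the straightening $\eta_k$ along $X_k$ satisfies $d_{X_j}\eta_k=id$ for all previously handled $X_j$, $j<k$, so property (ii), once achieved along $X_j$, is never destroyed by later corrections. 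If you want to salvage your one-pass construction, you would have to strengthen the inductive statement of Lemma \ref{smoothembedlemma} to carry the derivative constraints and verify at each stage that exact gluing and $\ell=g+id$ are simultaneously satisfiable; this amounts to reproving the paper's three straightening lemmas inside a harder induction, which is exactly the work your appeal to jet-matching was meant to avoid.
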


\begin{rmk}
Without $(ii)$, that is, the requirement on the derivative of $F^i$, this would be the same as Proposition \ref{smoothembedd}.
Condition $(ii)$ is important in view of the next section: it is needed in order to refine the construction so that the maps $F^i$'s become symplectomorphisms.
Notice that a symplectic form on $M$ yields a natural choice of $g_i:N_M(X_i)\cong TM|_{X_i}^{\perp}\rightarrow TM|_{X_i}$ given by an orthogonal splitting, for all $i$.
\end{rmk}

Before proving the statement, let's establish some lemmas.

\begin{lemma}\label{straightening}
Given a fibrewise linear bundle isomorphism $d\eta:T_X(NX)\rightarrow T_XM$, and a map $g:N\rightarrow T_XM$ yieding a splitting $T_XM\cong\tilde NX\oplus TX$, there exists a diffeomorphism $\tilde\eta:NX\rightarrow NX$ such that $d\eta \circ d(\tilde\eta)^{-1}=g\times id:T_X(NX)\rightarrow T_XM$.
\end{lemma}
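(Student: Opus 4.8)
The plan is to reduce the statement to prescribing the first-order behaviour of $\tilde\eta$ along the zero section $X$, and then to realize the required $1$-jet by an explicit composition of two elementary diffeomorphisms. First, rewrite the desired identity $d\eta\circ d(\tilde\eta)^{-1}=g\times id$ as $d\tilde\eta|_X = A$, where $A:=(g\times id)^{-1}\circ d\eta$ is a fibrewise linear bundle automorphism of $T_X(NX)\cong NX\oplus TX$ over $X$. Since the $\tilde\eta$ we seek must fix the zero section pointwise (so that the straightened map $\eta\circ\tilde\eta^{-1}$ still restricts to the identity on $X$), the only admissible $1$-jets are those whose matrix with respect to the horizontal/vertical splitting $TX\oplus NX$ has block form $\left(\begin{smallmatrix} I & D\\ 0 & P\end{smallmatrix}\right)$: writing $\tilde\eta(x,y)=(\phi(x,y),\psi(x,y))$ with $\phi(x,0)=x$, $\psi(x,0)=0$ forces the horizontal--horizontal block to be $I$ and the vertical-from-horizontal block to vanish. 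I would check that $A$ indeed has this shape. This uses that $\eta$ restricts to the identity on $X$, so $d\eta$ carries the horizontal $TX$ identically onto $TX\subset T_XM$; consequently $d\eta=\left(\begin{smallmatrix} I & D\\ 0 & E\end{smallmatrix}\right)$ with $E\colon NX\to \tilde N X$ an isomorphism, and composing with $(g\times id)^{-1}=\left(\begin{smallmatrix} I & 0\\ 0 & g^{-1}\end{smallmatrix}\right)$ gives $A$ of the claimed form with $P:=g^{-1}E$.

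Next, I would build $\tilde\eta$ as a composition $\tilde\eta=\tilde\eta_{D}\circ\tilde\eta_{P}$ realizing this $1$-jet. The vertical part is immediate: $P\colon NX\to NX$ is a fibrewise linear bundle automorphism, hence already a genuine diffeomorphism of the total space $NX$, with $d\tilde\eta_P|_X=\left(\begin{smallmatrix} I & 0\\ 0 & P\end{smallmatrix}\right)$. For the shear $D\colon NX\to TX$ I would fix a torsion-free affine connection on $X$ (with geodesic exponential $\exp^X$) and a linear connection on the bundle $NX$ (with parallel transport), and define $\tilde\eta_{D}$ near the zero section by sending $e\in N_xX$ to the parallel transport of $e$ along the short geodesic $t\mapsto \exp^X_x\!\big(t\,(DP^{-1})_x e\big)$. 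This fixes the zero section, and a direct first-order computation (the basepoint moves with velocity $(DP^{-1})_xe$ while the transported vector is $e$ to first order) shows $d\tilde\eta_{D}|_X=\left(\begin{smallmatrix} I & DP^{-1}\\ 0 & I\end{smallmatrix}\right)$, so that $d\tilde\eta|_X=\left(\begin{smallmatrix} I & DP^{-1}\\ 0 & I\end{smallmatrix}\right)\left(\begin{smallmatrix} I & 0\\ 0 & P\end{smallmatrix}\right)=\left(\begin{smallmatrix} I & D\\ 0 & P\end{smallmatrix}\right)=A$.

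Finally, since $\tilde\eta|_X=id$ and $d\tilde\eta|_X=A$ is an isomorphism, the inverse function theorem makes $\tilde\eta$ a diffeomorphism of a neighbourhood of $X$ onto a neighbourhood of $X$, which is all that is required (everything here is local around the zero section). Unwinding the reduction then yields $d\eta\circ d(\tilde\eta)^{-1}=g\times id$ along $X$.

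I expect the main obstacle to be the shear $\tilde\eta_D$: integrating the bundle homomorphism $D$ into a globally defined diffeomorphism, rather than merely the local coordinate shear $(x,y)\mapsto(x+D(x)y,\,y)$, is what forces the use of the auxiliary connections, and the delicate point is verifying that its $1$-jet along $X$ produces exactly the prescribed off-diagonal block while keeping the horizontal--horizontal block equal to the identity. By comparison, the linear part $P$, the block-form verification for $A$, and the closing inverse-function-theorem argument are routine.
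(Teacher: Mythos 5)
Your proof is correct, and it shares the paper's overall strategy --- rewrite the goal as prescribing the $1$-jet $d\tilde\eta|_X=A$ with $A=(g\times id)^{-1}\circ d\eta$, note that $A$ is block upper-triangular with identity horizontal block, and realize that $1$-jet by an explicit construction fixing the zero section --- but it packages the realization step differently. The paper does it in a single stroke: taking a bundle metric $\lambda$ on $NX$ (Lemma \ref{bundlemetric}) it sets $\tilde\eta:=\exp_\lambda^\ell$, where $\ell=A|_{NX}:NX\rightarrow T_X(NX)$ is the twisted inclusion of $NX$ as a complement of $TX$; since $d_X\exp_\lambda^\ell$ is the identity on $TX$ and equals $\ell$ on $NX$, this one map already has $1$-jet $A$, treating your blocks $D$ and $P$ simultaneously. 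You instead factor $A=\left(\begin{smallmatrix} I & DP^{-1}\\ 0 & I\end{smallmatrix}\right)\left(\begin{smallmatrix} I & 0\\ 0 & P\end{smallmatrix}\right)$ and realize the factors separately: the block-diagonal part tautologically (a fibrewise linear automorphism is already a diffeomorphism of the total space) and the shear by parallel transport along short geodesics. The two constructions are close cousins --- for a bundle metric, $\exp_\lambda^\ell$ is, to first order along $X$, exactly ``move along the geodesic determined by the horizontal part of $\ell(v)$ while transporting the vertical part'' --- so your factorization buys transparency (the block bookkeeping and the role of each auxiliary connection are explicit), at the price of an extra decomposition step, while the paper's version buys economy, reusing the $\exp^\ell$ device already introduced in the remark following Lemma \ref{smoothembedlemma}. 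A genuine merit of your write-up is that it makes explicit the hypothesis $\eta|_X=id$ (equivalently $A|_{TX}=id$): the paper's construction needs it just as much, since $d_X\exp_\lambda^\ell$ is forced to be the identity on $TX$, but the lemma's statement leaves it implicit.
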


\begin{proof}
Notice that $T_X(NX)\cong NX\oplus TX$ canonically, so that we get an isomorphism $g\times id:T_X(NX)\rightarrow TM$. So we get an isomorphism $(g\times id)^{-1}\circ d\eta:T_X(NX)\rightarrow T_X(NX)$. We can integrate this isomorphism to a diffeomorphism $\tilde\eta:NX \rightarrow NX$. 

Let's build $\tilde\eta$ explicitely as follows: start with a standard metric $\lambda$ on $NX$ (as in Lemma \ref{bundlemetric}); consider the subbundle $NX$ of $T_X(NX)$, and its image, the subbundle $\ell=(g\times id)^{-1}\circ d\eta (NX)\subset T_X(NX)$. Consider the exponential map $\exp_{\lambda}^{\ell}:N_{NX}X\rightarrow NX$. Since $N_{NX}X\cong NX$ canonically, let $\tilde\eta:=\exp_{\lambda}^{\ell}:NX\rightarrow NX$.
\end{proof}

\begin{lemma}\label{straighteningonplumbing}
Let $N(\bigcup_i X_i)$ be a plumbing, with plumbing metric $\lambda$; let $\{g_i:NX_i\rightarrow T_{X_i}M\}_{i\in\mathcal I}$ be a splitting compatible with the plumbing, as in Definition \ref{compatiblesplitting}. Let $\eta:N(\bigcup_iX_i)\rightarrow M$ be an embedding as built in Proposition \ref{smoothembedd}.

Assume moreover that, for some $j$, $d_{N_{X_i}X_{ij}} (g_j\circ\Phi_{ij}^j)=id:T_{X_{ij}M\rightarrow T_{X_{ij}M}}$ and that $d_{X_j}\eta=g_j\times id:NX_j\times TX_j\rightarrow T_{X_j}M$.
Apply the construction of Lemma \ref{straightening} to $d_{X_i}\eta$.
Then the linearization of the map $\tilde\eta$ along $X_j$ is the identity: $d_{X_j}\tilde\eta=id$.
\end{lemma}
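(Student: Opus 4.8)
The plan is to unwind the explicit construction of $\tilde\eta$ from Lemma \ref{straightening} and track its behaviour in a neighbourhood of $X_{ij}$. First I would pin down where the copy of $X_j$ sits inside $NX_i$: via the plumbing gluing one has $\Phi_{ij}^j(z,v_i,0)\sim\Phi_{ij}^i(z,v_i,0)=(z,v_i)$, so near $X_{ij}$ the zero section of $NX_j$ (that is, $X_j$) is identified with the restriction $NX_i|_{X_{ij}}$, the union of the fibres of $NX_i$ over $X_{ij}$. Hence $d_{X_j}\tilde\eta=id$ means the linearisation of $\tilde\eta$ is the identity along $NX_i|_{X_{ij}}$, and I would split the ambient tangent space $T_{(z,v_i)}(NX_i)\cong T_zX_{ij}\oplus N_{X_{ij}}X_i|_z\oplus N_zX_i$ into the directions tangent to $X_j$ (namely $T_zX_{ij}$ together with the vertical fibre $N_zX_i$) and the single transverse block $N_{X_{ij}}X_i|_z\cong NX_j|_z$.

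Recall $\tilde\eta=\exp_\lambda^\ell$ with $\ell=(g_i\times id)^{-1}\circ d_{X_i}\eta(NX_i)\subset T_{X_i}(NX_i)$ and $\lambda$ the plumbing bundle metric. Step one is to show that $\ell$ coincides with the vertical subbundle over $X_{ij}$. Since the vertical subbundle corresponds under $g_i\times id$ to $g_i(NX_i)$, this amounts to checking $d_{X_i}\eta(N_zX_i)=g_i(N_zX_i)$ for $z\in X_{ij}$; I would obtain this by comparing the two restrictions $d_{X_i}\eta$ and $d_{X_j}\eta$ at points of $X_{ij}$ (both are restrictions of the single $d\eta$ on the glued plumbing), feeding in the hypothesis $d_{X_j}\eta=g_j\times id$ and the first compatibility condition of Definition \ref{compatiblesplitting}. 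Step two then gives $\tilde\eta|_{X_j}=id$ as a map: by Lemma \ref{bundlemetric} and Lemma \ref{plumbmetric} the metric $\lambda$ is fibrewise linear with totally geodesic fibres, so $\exp_\lambda$ along a vertical direction from the zero vector is the identity of the fibre; with $\ell$ vertical over $X_{ij}$ the geodesics defining $\tilde\eta(z,v_i)$ stay inside $N_zX_i$ and land back on $(z,v_i)$.

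Since $\tilde\eta$ now fixes $X_j$ pointwise, $d_{X_j}\tilde\eta$ is automatically the identity on the directions tangent to $X_j$, i.e. on $T_zX_{ij}\oplus N_zX_i$. The remaining and genuinely delicate step is the transverse block $N_{X_{ij}}X_i|_z\cong NX_j|_z$: here I would differentiate the family of geodesics $\exp_\lambda(\ell(v_i))$ as the basepoint moves off $X_{ij}$ in the $N_{X_{ij}}X_i$ direction, i.e. compute the first-order variation of the subbundle $\ell$ transverse to $X_{ij}$. This is precisely what the hypothesis $d_{N_{X_i}X_{ij}}(g_j\circ\Phi_{ij}^j)=id$ supplies: it says that the splitting $g_j$, transported through the plumbing, does not rotate the normal directions to first order as one moves into $X_j$, so $\ell$ is constant to first order in the transverse direction and the Jacobi-type variation of $\exp_\lambda^\ell$ contributes nothing beyond the identity. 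I expect this transverse computation to be the main obstacle, as it requires keeping careful track of the natural identification $T_{X_i}(T_{X_j}NX_{ij})\leftrightarrow T_{X_j}(T_{X_i}NX_{ij})$ used in Definition \ref{compatiblesplitting}, and of the fact that $\lambda$ is a genuine bundle metric so that the mixed horizontal--vertical geodesic corrections vanish to the order needed.
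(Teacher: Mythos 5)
Your proposal is correct and follows essentially the same route as the paper: the decisive transverse step --- showing that the first-order variation of the data defining $\tilde\eta$ in the $N_{X_i}X_{ij}$ direction is trivial --- is exactly the paper's computation, combining the second compatibility condition of Definition \ref{compatiblesplitting}, the symmetry of mixed second derivatives (your identification $T_{X_i}(T_{X_j}NX_{ij})\leftrightarrow T_{X_j}(T_{X_i}NX_{ij})$), and the hypotheses $d_{X_j}\eta=g_j\times id$ and $d_{N_{X_i}X_{ij}}(g_j\circ\Phi_{ij}^j)=id$. Your preliminary steps (locating the copy of $X_j$ inside $NX_i$ as $NX_i|_{X_{ij}}$, showing $\ell$ is vertical over $X_{ij}$ via the first compatibility condition so that $\tilde\eta$ fixes $X_j$ pointwise, and using that $\exp_\lambda=id$ on the fibres of $NX_i$) spell out what the paper's ``almost a tautology'' leaves implicit, and are consistent with it.
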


\begin{rmk}
Notice that we can freely use the notation $d_{X_j}\tilde \eta$ when $\tilde\eta:NX_i\rightarrow NX_i$, since $X_j$ has been identified with $N_{X_j}X_i\subset NX_i$.
\end{rmk}

\begin{proof}
This is almost a tautology. By compatibility assumptions, $d_{N_{X_j}X_{ij}} (g_i\circ\Phi_{ij}^i)=$\linebreak $ d_{N_{X_i}X_{ij}} (g_j\circ\Phi_{ij}^j):T_{X_{ij}}M\rightarrow T_{X_{ij}}M$. Let's build $\tilde\eta$ as in Lemma \ref{straightening}, with respect to the plumbing metric $\lambda$. Then the map $(g_i\times id)\circ d_{X_i}\eta:NX_i\rightarrow NX_i$ along $X_j$ has derivative 
$$d_{X_j}(g_i\times id)\circ d_{X_j}d_{X_i}\eta=d_{X_i}(g_j\times id)\circ d_{X_i}d_{X_j}\eta=id: N_{X_j}NX_i\rightarrow N_{X_j}NX_i$$
since: $d_{X_j}(g_i\times id)=d_{X_i}(g_j\times id)$ by assumption on compatibility; $d_{X_j}d_{X_i}\eta=d_{X_i}d_{X_j}\eta$ is true for any $\eta$; and $d_{X_i}(g_j\times id)\circ d_{X_i}d_{X_j}\eta=id$ is the assumption on $\eta$.
\end{proof}

\begin{lemma}\label{extendstraightening}
The map $\tilde \eta$ in Lemma \ref{straighteningonplumbing} can be extended to a map $\tilde{\tilde\eta}:N(\bigcup_iX_i)\rightarrow N(\bigcup_iX_i)$ such that $\tilde{\tilde\eta}|_{\bigcup_i X_i}=id $, $d\eta\circ d_{X_i} (\tilde{\tilde\eta})^{-1}=g_i\times id$ and $d_{X_j}\tilde{\tilde\eta}=id$ for all $X_j$ satisfying the hypotheses of \ref{straighteningonplumbing}. 
\end{lemma}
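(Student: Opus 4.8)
The plan is to build $\tilde{\tilde\eta}$ stratum by stratum, starting from the bundle $NX_i$ on which $\tilde\eta$ already lives and propagating it across the plumbing gluing, then damping to the identity away from the intersections. First I set $\tilde{\tilde\eta}:=\tilde\eta$ on $NX_i$. This already secures two of the three requirements: $\tilde{\tilde\eta}|_{X_i}=\mathrm{id}$ and $d\eta\circ d_{X_i}(\tilde{\tilde\eta})^{-1}=g_i\times\mathrm{id}$, since this is exactly what the construction of Lemma \ref{straightening} delivers. The issue is that $\tilde\eta$ is only defined on the single bundle $NX_i$, whereas $\tilde{\tilde\eta}$ must be a self-map of the whole quotient $N(\bigcup_iX_i)=\bigcup_k NX_k/\sim$. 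Well-definedness on the quotient forces the value of $\tilde{\tilde\eta}$ on a neighborhood of each $X_{ij}$ inside $NX_j$ to be the transport of $\tilde\eta$ across the gluing: using the bundle isomorphisms $\Phi_{ij}^i,\Phi_{ij}^j$ identifying neighborhoods of $X_{ij}$ in $NX_i$ and $NX_j$, one obtains a map $\sigma_j$ on a neighborhood of $X_{ij}$ in $NX_j$, characterized by the requirement that $\tilde\eta$ and $\sigma_j$ carry $\sim$-identified points to $\sim$-identified points.

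The crucial input is Lemma \ref{straighteningonplumbing}: for every $X_j$ satisfying its hypotheses we have $d_{X_j}\tilde\eta=\mathrm{id}$, and hence $d_{X_j}\sigma_j=\mathrm{id}$ as well. In other words, $\sigma_j$ agrees with the identity to first order along $X_j$. I then choose, for each such $j$, a smooth cutoff $\chi_j$ on $NX_j$ equal to $1$ on a neighborhood of $X_{ij}$ where $\sigma_j$ is defined and equal to $0$ outside a slightly larger neighborhood, and define $\tilde{\tilde\eta}$ on $NX_j$ by interpolating between $\sigma_j$ and the identity via $\chi_j$ (fiberwise geodesic interpolation with respect to the plumbing metric $\lambda$, or linear interpolation in a local trivialization). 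Because $\sigma_j-\mathrm{id}$ and $d_{X_j}(\sigma_j-\mathrm{id})$ both vanish along $X_j$, the interpolation preserves $\tilde{\tilde\eta}|_{X_j}=\mathrm{id}$ and $d_{X_j}\tilde{\tilde\eta}=\mathrm{id}$, which is precisely the third required property. Away from all intersections $\tilde{\tilde\eta}$ is simply the identity.

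Next I verify coherence at higher strata: near a triple intersection $X_{ijk}$ the candidate maps coming from the various bundles agree under $\sim$, because the plumbing compatibility $\Phi_{I'}^{I''}\circ\Phi_{I}^{I'}=\Phi_{I}^{I''}$ makes the transports of $\tilde\eta$ through $NX_j$ and through $NX_k$ coincide, exactly as in the well-definedness argument for the plumbing itself. Choosing the cutoffs $\chi_j$ supported in disjoint-enough collars of the respective double intersections (shrinking neighborhoods as needed) ensures the interpolated pieces glue to a single smooth self-map. Finally, since $\tilde{\tilde\eta}$ restricts to the diffeomorphism $\tilde\eta$ on $NX_i$, to transports of it near the intersections, and to the identity elsewhere, and since it fixes every zero section with controlled first derivative, it is a diffeomorphism on a (possibly smaller) neighborhood of $\bigcup_iX_i$ by the inverse function theorem applied along the zero sections.

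The main obstacle is the simultaneous bookkeeping in the last two paragraphs: the interpolation on each $NX_j$ must be compatible with the plumbing gluing at \emph{all} higher-order strata while preserving the first-order conditions along \emph{every} relevant $X_j$. Both are controlled by the same two facts, namely the compatibility identities of Definition \ref{plumdata} and the vanishing-to-first-order $d_{X_j}\tilde\eta=\mathrm{id}$ supplied by Lemma \ref{straighteningonplumbing}, so the difficulty is organizational rather than conceptual; the cutoffs must be chosen with nested supports so that on each stratum only the expected transported pieces are active.
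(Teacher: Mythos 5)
There is a genuine gap, and it sits exactly where you localize your cutoffs. Your $\chi_j$ equals $1$ on a neighborhood of $X_{ij}$ in $NX_j$ and $0$ outside a slightly larger one, so its transition region recedes from $X_{ij}$ in \emph{all} directions, including the fibre directions of $NX_j$ over $X_{ij}$. But inside the $NX_j$-chart of the plumbing those fibre directions \emph{are} $X_i$: under $\Phi_{ij}^j$ the zero section of $NX_i$ corresponds to the fibres of $NX_j$ over points of $X_{ij}$. Hence your transition annulus necessarily crosses $X_i$, and this produces two failures. First, well-definedness: on the chart overlap (the image of $NX_{ij}$ under the gluing) step one has already declared $\tilde{\tilde\eta}=\tilde\eta$, while your interpolation gives $\chi_j\sigma_j+(1-\chi_j)\,\mathrm{id}\neq\sigma_j=\tilde\eta$ wherever $\chi_j<1$; and the transition cannot be pushed outside the overlap, because $\sigma_j$ is only defined there, so no reading of your definition is consistent. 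Second, even if the interpolation is allowed to override step one, the required identity $d\eta\circ d_{X_i}(\tilde{\tilde\eta})^{-1}=g_i\times\mathrm{id}$ breaks: at a point $p\in X_i$ in the transition region one has $\sigma_j(p)=p$, so $d_p\tilde{\tilde\eta}=\chi_j(p)\,d_p\sigma_j+(1-\chi_j(p))\,\mathrm{id}$, which differs from $d_p\sigma_j$ unless $d_p\tilde\eta=\mathrm{id}$; but $d_{X_i}\tilde\eta$ is precisely \emph{not} the identity in general (that deviation is the whole purpose of the straightening in Lemma \ref{straightening}). Note that the first-order agreement supplied by Lemma \ref{straighteningonplumbing} holds along the $X_j$'s, not along $X_i$, so it cannot rescue this step; your verification covers only the $X_j$-condition.

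The paper avoids this by interpolating once, on the $NX_i$-chart, with a bump function $\beta(|v_i|)$ of the $NX_i$-\emph{fibre norm alone} (measured with the plumbing metric $\lambda$): $\tilde{\tilde\eta}=(1-\beta)\tilde\eta+\beta\cdot\mathrm{id}$, extended by the identity outside. Since $\beta$ is constant in all directions tangent to $X_i$, the region $\{\beta=0\}$ is a uniform disc-bundle neighborhood of the whole of $X_i$, so $\tilde{\tilde\eta}=\tilde\eta$ near every point of $X_i$ and the condition $d\eta\circ d_{X_i}(\tilde{\tilde\eta})^{-1}=g_i\times\mathrm{id}$ survives on all of $X_i$; the transition occurs purely transversally to $X_i$, at large $|v_i|$, which is exactly where the $NX_i$-chart is exited inside the plumbing, so extension by the identity is smooth, and no per-chart coherence bookkeeping is needed. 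The transition region does cross each $X_j$, but there the hypotheses $\tilde\eta|_{X_j}=\mathrm{id}$ and $d_{X_j}\tilde\eta=\mathrm{id}$ make the interpolation harmless, exactly as in your own computation. So the repair is to discard the family of cutoffs $\chi_j$ localized near the $X_{ij}$ (and with it the insistence that $\tilde{\tilde\eta}=\tilde\eta$ on all of $NX_i$), replacing them with a single cutoff in the variable $|v_i|$; as written, your construction does not yield a well-defined map satisfying the second property of the lemma.
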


\begin{proof}
We can do this by explicitly interpolating the map $\tilde \eta$ defined on a neighborhood of $X_i$ with the identity map. Just consider local coordinates on the plumbing: on $NX_I$, $I=\{k_1,\ldots,k_m\}$, we have coordinates $(x,v_{k_1},\ldots,v_{k_m})$ where $x\in X_{I}, v_k\in NX_k|_{X_I}$. Consider the function $\tilde\eta:NX_i\rightarrow NX_i$ defined on a neighborhood $\mathcal V$ of $X_i$ on the plumbing. Consider a bump function $\beta(|v_i|)$ (the norm of $v_i$ is relative to the plumbing metric 
$\lambda$) and neighborhoods $\mathcal U_1\subset\mathcal U_2\subset \mathcal V$ of $X_i$ such that $\beta=0$ on $\mathcal U_1$ and $\beta=1$ on the complement of $\mathcal U_2$. Define $\tilde{\tilde\eta}:\mathcal V\rightarrow\mathcal V$ as $\tilde{\tilde\eta}=(1-\beta)\tilde\eta+\beta\cdot id$. Such function extends to $\tilde{\tilde\eta}:N(\bigcup_iX_i)\rightarrow N(\bigcup_iX_i)$.  As long as $d_{X_j}\tilde\eta =id$, also $d_{X_j} \tilde{\tilde\eta}=id$.
\end{proof}

\begin{proof}[Proof of Proposition \ref{rigidembedding}]
To begin, we can use Proposition \ref{smoothembedd} to construct a smooth embedding, i.e. we get maps $f^i:NX_i\rightarrow M$ satisfying properties (i) and (iii) but not necessarily (ii). From property (iii), this is the same as having an embedding $f:N(\bigcup_i X_i)\rightarrow M$.

Now we'll construct a map $\eta: N(\bigcup_i X_i)\rightarrow N(\bigcup_iX_i)$ in such a way that the composition $F=f\circ\eta$ will give the required rigid embedding $F:N(\bigcup_iX_i)\rightarrow M$.
The idea is to build $\eta$ as a composition of functions $\eta_1, \ldots,\eta_n$ that ``straighten'' $f$ along each $X_i$.

Let's start by applying Lemma \ref{straightening} and Lemma \ref{extendstraightening} to the linear bundle isomorphisms $df_1:T(NX_1)\rightarrow T_{X_1}M$ and  $g_1\times id:NX_1\oplus TX_1\rightarrow TM$, to build a map $\widetilde{\widetilde f_1}: N(\bigcup_i X_i)\rightarrow N(\bigcup_iX_i)$ such that $df_1\circ d_{X_1}\widetilde{\widetilde f_1}=g_1\times id$. Let
$\eta_1:= \widetilde{\widetilde f_1}$.

We can proceed by induction: let $\eta_k:N(\bigcup_i X_i)\rightarrow N(\bigcup_iX_i)$ be the function obtained by applying Lemma \ref{straightening} and Lemma \ref{extendstraightening} to $d_{X_k}(f\circ\eta_1\ldots\circ\eta_{k-1})$. Such $\eta_k$ has the property that $d_{X_k}(f\circ\eta_1\ldots\circ\eta_{k-1})\circ d_{X_k}\eta_k=g_k\times id$. Moreover, since $d_{X_{k-1}}(f\circ\eta_1\ldots\circ\eta_{k-1})=d_{X_{k-1}}(f\circ\eta_1\ldots\circ\eta_{k-2})\circ d_{X_{k-1}}\eta_{k-1}=g_{k-1}\circ id$, by Lemma \ref{straighteningonplumbing} we have that $d_{X_{k-1}}\eta_k=id$ and similarly $d_{X_i}\eta_k=id$ for all $i\leq k-1$.

Now consider $F:N(\bigcup_i X_i)\rightarrow M$ given by $F=f\circ\eta_1\circ\ldots\circ\eta_n$. This function on the plumbing restricts to functions $F_i:NX_i\rightarrow M$ satisfying (i) and (iii). Let's check that they also satisfy (ii).
By definition, $d_{X_k}F_k=d_{X_k} f_k\circ d_{X_k} \eta_1\circ\ldots\circ d_{X_k}\eta_n$. By construction, $d_{X_k}\eta_i=id$ for all $i\geq k+1$. Therefore, $d_{X_k}F_k=d_{X_k} f_k\circ d_{X_k} \eta_1\circ\ldots\circ d_{X_k}\eta_k=g_k\times id$ by construction of $\eta_k$.
\end{proof}

\begin{thm}\label{plumbingsexist}
Let $\{X_i\}_{i\in\mathcal{I}}$ be a finite set of transverse submanifolds.
Given a splitting $g_i:NX_i\rightarrow TM$ as in Definition \ref{compatiblesplitting}, assume that $g_i|_{N_{X_j}X_i\subset NX_i}:N_{X_j}X_i\rightarrow TX_j\subset TM$ for all $i,j$. One can find:

\begin{itemize}
\item plumbing data to build a plumbing $N(\bigcup_i X_i)$ compatible with the splitting;
\item an embedding $F: N(\bigcup_i X_i)\rightarrow M$ such that $d_{X_i}F=(g_i,id):T(NX_i)\rightarrow TM$.
\end{itemize} 
\end{thm}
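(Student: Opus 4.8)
The plan is to produce the two required objects in the natural order: first build plumbing data for which the given splitting is \emph{compatible} in the sense of Definition \ref{compatiblesplitting}, and then feed those data together with the splitting into Proposition \ref{rigidembedding}. Indeed, once compatible plumbing data are in hand, Proposition \ref{rigidembedding} directly yields maps $F^i:NX_i\to M$ with $F^i|_{X_i}=id$, $d_{X_i}F^i=g_i+id$, and agreeing under $\sim$, so that they descend to the desired $F:N(\bigcup_i X_i)\to M$ with $d_{X_i}F=(g_i,id)$. Thus all the content is in the first bullet: constructing plumbing data whose tubular neighborhood maps and bundle isomorphisms realize the prescribed first-order data $g_i$.

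To build such data I would run the same reverse induction on the intersection depth $k=\#I'$ used in Proposition \ref{smoothplumbingsexist}, but constrain the linearizations at every stage. For each tubular neighborhood map $\psi_I^{I'}:N_{X_{I'}}X_I\to X_{I'}$ that the induction asks us to choose, instead of an arbitrary tubular neighborhood I would take $\psi_I^{I'}=\exp^{\ell}$ (computed inside $X_{I'}$ for any auxiliary metric on $X_{I'}$), where $\ell$ is the bundle inclusion prescribed by the appropriate restriction of the splitting, using the $\exp^{\ell}$ construction recalled in the remark following Lemma \ref{smoothembedlemma}. The standing hypothesis $g_i|_{N_{X_j}X_i}:N_{X_j}X_i\to TX_j$ is exactly what guarantees that this prescribed $\ell$ takes values in $TX_{I'}$, so that $\exp^{\ell}$ genuinely maps into $X_{I'}$ and defines an honest tubular neighborhood map; by construction its linearization along $X_I$ is the relevant restriction of $g$, which is the first bullet of Definition \ref{compatiblesplitting}. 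The bundle isomorphisms $\Phi_I^{I'}$ are then fixed exactly as in Proposition \ref{smoothplumbingsexist}, by extending the isomorphisms already chosen at deeper strata; since every choice is made from the single coherent system $\{g_i\}$, the resulting maps assemble into a genuine plumbing.

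It remains to verify the second bullet of Definition \ref{compatiblesplitting}, the symmetry identity
$$ {d_{X_j}d_{X_i}\nu}^{-1}\circ d_{N_{X_j}X_{ij}}(g_i\circ\Phi_{ij}^i)= {d_{X_i}d_{X_j}\nu}^{-1}\circ d_{N_{X_i}X_{ij}}(g_j\circ\Phi_{ij}^j), $$
where $\nu$ is any embedding of the plumbing. By Lemma \ref{compatibilityinvariance} it suffices to check this for one convenient $\nu$, and the identity is precisely the statement that two mixed second-order jets agree under the canonical identification $NX_{ij}\cong(NX_i\oplus NX_j)|_{X_{ij}}$. Since both sides are computed from the same splitting through bundle isomorphisms built symmetrically in $i$ and $j$, they agree after invoking the commutation $d_{X_i}d_{X_j}=d_{X_j}d_{X_i}$ of mixed derivatives, the same Schwarz-type fact used in Lemma \ref{compatibilityinvariance}. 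I expect this verification at triple and higher intersections to be the main obstacle: one must ensure that the linearizations prescribed along the various $X_{I'}$ glue consistently, so that the bundle isomorphisms extending from the deeper strata can be chosen to meet the symmetry condition simultaneously for all pairs. This is exactly the role of the inductive bookkeeping of Proposition \ref{smoothplumbingsexist} together with the hypothesis $g_i(N_{X_j}X_i)\subset TX_j$, which makes the first-order data consistent with the nested structure of the $X_I$. With the compatible plumbing data established, Proposition \ref{rigidembedding} completes the proof.
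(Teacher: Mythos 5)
Your overall skeleton matches the paper's proof (build plumbing data compatible with the splitting by induction on intersection depth, then finish with Proposition \ref{rigidembedding}), but the step you defer is precisely where the content of the theorem lives, and your proposed resolution of it does not work. You claim the mixed second-order identity in Definition \ref{compatiblesplitting} holds because the data are ``built symmetrically in $i$ and $j$'' and then Schwarz's theorem applies. Schwarz only gives $d_{X_i}d_{X_j}\nu=d_{X_j}d_{X_i}\nu$; the two sides of the identity additionally involve $d_{N_{X_j}X_{ij}}(g_i\circ\Phi_{ij}^i)$ and $d_{N_{X_i}X_{ij}}(g_j\circ\Phi_{ij}^j)$, which are built from \emph{independent} pieces of data (the splitting $g_i, g_j$ is given in advance and is in no way symmetric, and the isomorphisms $\Phi_{ij}^i,\Phi_{ij}^j$ are separate choices). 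For a generic choice of bundle isomorphisms the identity simply fails; this is why the paper's proof explicitly says the data produced by the smooth construction is ``a priori not compatible with the maps $g_i$.'' The paper's missing idea, which your proposal lacks, is to treat the identity not as something to verify but as an \emph{equation to solve}: at each inductive stage one fixes $\Phi_{J\cup\{i,j\}}^{J\cup\{i\}}$ arbitrarily, observes that the compatibility identity then prescribes the relevant derivative data of $\Phi_{J\cup\{i,j\}}^{J\cup\{j\}}$ along $X_{J\cup\{i,j\}}$, and constructs $\Phi_{J\cup\{i,j\}}^{J\cup\{j\}}$ to satisfy it. Your own text concedes that the triple-intersection consistency is ``the main obstacle'' and then appeals to ``inductive bookkeeping'' without supplying the mechanism; that is the gap.

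A secondary problem: you propose to take each $\psi_I^{I'}=\exp^{\ell}$ for ``any auxiliary metric on $X_{I'}$.'' Independent choices of auxiliary metrics will generally violate the coherence requirements of Definition \ref{plumdata} (the retraction composition law $\rho_I^{I'}\circ\rho_{I'}^{I''}=\rho_I^{I''}$ and the compatibility $\psi_I^{I''}=\psi_{I'}^{I''}\circ(\Phi_I^{I'})|_{X_{I''}}$), so what you build need not be a plumbing at all. The paper avoids this by invoking Proposition \ref{rigidembedding} \emph{inside each $X_J$ at every inductive step}, not only once at the end: the tubular neighborhoods $\psi_I^J$ are obtained as restrictions of a single rigid embedding of the plumbing of the deeper strata into $X_J$, with linearizations prescribed by the restricted splitting, and this is what makes them automatically cohere. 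Both repairs are needed before Proposition \ref{rigidembedding} can legitimately be applied to produce $F$.
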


\begin{proof}
This is done by induction in the following way: for any $1\leq k\leq n$, for all $I$ such that $\# I=n-k$, we build a $k$-plumbing of $\{X_{I\cup\{i\}}\}_{i \notin I}$ in $X_{I}$ compatible with the induced splittings $g_i|_{X_I}:NX_i|_{X_I}\rightarrow T_{X_I}M$. 
For $k=1$, let's start by constructing a smooth plumbing of $\bigcup_i X_i$ and embedding it in $M$. This induces a metric $\lambda$ on $M$. Let $\psi_\mathcal I^{\mathcal I \backslash \{i\}}=exp_\lambda N_{X_{\mathcal I \backslash \{i\}}X_\mathcal I}$ for all $i$. These maps are then automatically compatible with the splitting. 

For the inductive step, let's assume that all $\{\psi_I^{I'}\}_{I'\subset I\subset \mathcal I, \#I'\geq n-k}$ have been constructed such that they are compatible with the restrictions of $g_i$ to $N_{X_{I'}}X_I$ for all $I'\subset I\subset \mathcal I, \#I'\geq n-k$. 
Pick $J$ such that $\#J=n-k-1$, let's construct plumbing data for $\bigcup_{\#I=n-k, I\supset J} X_I$ in $X_J$ and an embedding $\nu_J$ of such plumbing. A priori this data is not compatible with the maps $g_i$, so we will use the map $\nu_J$ to construct maps $\Phi_I^{J}$ for $\#I=n-k, I\supset J$ such that the new plumbing is compatible with the maps $g_i$. 
On a neighborhood of $X_{J\cup\{i, j\}}$, 
the condition
$${d_{X_{J\cup\{j\}}}d_{X_{J\cup\{i\}}}\nu_J }^{-1} \circ d_{N_{X_{J\cup\{j\}}}X_{J\cup\{i, j\}}} (g_i\circ\Phi_{J\cup\{i, j\}}^{J\cup\{i\}})=$$ 

$${d_{X_{J\cup\{i\}}}d_{X_{J\cup\{j\}}}\nu_J }^{-1} \circ d_{N_{X_{J\cup\{i\}}}X_{J\cup\{i, j\}}} (g_j\circ\Phi_{J\cup\{i, j\}}^{J\cup\{j\}})$$
can be seen as a condition on $d_{N_{X_{J\cup\{i\}}}X_{J\cup\{i, j\}}}\Phi_{J\cup\{i, j\}}^{J\cup\{j\}}$, given $\nu_J, g_i, g_j, \Phi_{J\cup\{i, j\}}^{J\cup\{j\}}$. Therefore we can fix any $\Phi_{J\cup\{i, j\}}^{J\cup\{i\}}$ and then construct $\Phi_{J\cup\{i, j\}}^{J\cup\{j\}}$ which satisfies the requirement.

At this point we invoke Proposition \ref{rigidembedding} to build embeddings $ \{\psi_I^{J}\}_{J\subset I\subset \mathcal I}$ such that $d_{X_{J\cup\{i\}}} \psi_{J\cup\{i\}}^J=g_i|_{N_{X_J}{X_{J\cup\{i\}}}}$; this concludes the induction.

For $k=n$, this yields the $n$-plumbing; Proposition \ref{rigidembedding} then yields an embedding $F: N(\bigcup_i X_i)\rightarrow M$ such that $d_{X_i}F=(g_i,id):T(NX_i)\rightarrow TM$.
\end{proof}

\section{Symplectic form on plumbing}\label{symplecticformonplum}
In a symplectic setting, we want a model of plumbing that carries a symplectic form and can be symplectically embedded in $M$. 
As discussed before, a choice of connection on a symplectic vector bundle yields a symplectic form in a neighborhood of the zero section. 
Therefore, the natural way to induce a symplectic form on 
$N(\bigcup_iX_i)$ is to pick connections $\{\alpha_i\}$ on each $NX_i$, then show that the induced symplectic forms $\omega_{\alpha_i}$ on $NX_i$ naturally agree inside $N(\bigcup_i X_i)$. 
This is not true for any arbitrary choice of connections.

This section shows how to pick the right connections $\{\alpha_i\}$ on each $NX_i$, in such a way that the forms $\omega_{\alpha_i}$ agree when gluing the normal bundles together.

\begin{defn}\label{compatibleconn}
A connection $\alpha_i$ on $NX_i$ is \textbf{compatible} with the plumbing data when, for any $I$ such that $i\in I$, $\alpha_i$ is the pushforward, by $\Phi_I^i$, of a connection on $NX_i|_{X_I}$, that is, $$\alpha_i=(\Phi_I^i)_*(\alpha_i|_{{X_I}}).$$
\end{defn}

In particular this implies that the curvature of $\alpha_i$ vanishes in certain directions close to $X_I$.

To have full compatibility with the symplectic structure, we need one more definition: 
\begin{defn}\label{symplumdata}
If $M$ is symplectic and $X_i$ are symplectic submanifolds, a system of bundle isomorphisms is \textbf{symplectic} when the isomorphisms $\Phi_I^J$ are all isomorphisms of symplectic bundles.

A system of tubular neighborhoods is \textbf{symplectic} when, for all $I$ such that $\#I\geq 2$, there exist connections $\alpha_I=\oplus_{i\in I} \alpha_i|_{X_I}$ such that $\psi_I^{I'}:(N_{X_{I'}}X_I,\omega_{\alpha_I})\rightarrow (X_{I'}, \omega_{I'})$ is a symplectomorphism.

A connection $\alpha_i$ on $NX_i$ is \textbf{compatible with the symplectic plumbing data} when $\alpha_i$ is a symplectic connection and $\alpha_i=(\Phi_I^i)_*(\alpha_i|_{X_I})$ for all $I$ containing $i$.
\end{defn}

\begin{lemma}\label{nconnections}
Let $\{X_i\}_{i\in\mathcal{I}}, \{\psi_I^{I'}\},\{\Phi_I^{I'}\} $ be the data of a plumbing as in Definition \ref{plumdata}. Then there exist connections $\{\alpha_i\}$ on each $NX_i$ which are compatible with the plumbing. If the plumbing data is symplectic as in Definition \ref{symplumdata}, then the connections can be chosen to be compatible with the symplectic data.
\end{lemma}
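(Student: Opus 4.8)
The plan is to proceed by reverse induction on the depth $k=\#I$ of the intersection strata $X_I$, exactly paralleling the construction of the plumbing metric in Lemma \ref{plumbmetric}. At each level I will produce, for every $I$ with $i\in I$, a connection $\alpha_i^{(I)}$ on the bundle $NX_i|_{X_I}\to X_I$, subject to two requirements: that near each deeper stratum $X_J$ (with $I\subsetneq J$, $i\in J$) the connection $\alpha_i^{(I)}$ equals the pushforward $(\Phi_J^I)_*(\alpha_i^{(J)})$ of the one already built on $X_J$, and (in the symplectic case) that each $\alpha_i^{(I)}$ is a symplectic connection. When the induction reaches $k=1$, the connection $\alpha_i:=\alpha_i^{(\{i\})}$ on $NX_i\to X_i$ satisfies $\alpha_i=(\Phi_I^i)_*(\alpha_i|_{X_I})$ for every $I\ni i$, which is exactly the compatibility of Definition \ref{compatibleconn}.

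The base case is the deepest stratum: there $\alpha_i^{(I)}$ may be chosen freely as any connection on $NX_i|_{X_I}$ (any symplectic one in the symplectic case), since there are no deeper strata to impose constraints. For the inductive step, assume $\alpha_i^{(J)}$ has been built for all $\#J\geq k$ and fix $I$ with $\#I=k-1$. Near each deeper stratum $X_J\supsetneq X_I$ the compatibility requirement forces $\alpha_i^{(I)}$ to equal the pushforward of $\alpha_i^{(J)}$; the key point is that these locally prescribed values agree on the overlaps $X_J\cap X_{J'}=X_{J\cup J'}$. This is precisely where the plumbing cocycle identity $\Phi_{I'}^{I''}\circ\Phi_I^{I'}=\Phi_I^{I''}$ enters, since on $X_{J\cup J'}$ it gives $\Phi_{J\cup J'}^I=\Phi_J^I\circ\Phi_{J\cup J'}^J=\Phi_{J'}^I\circ\Phi_{J\cup J'}^{J'}$; this is the same mechanism that makes the fibrewise metrics $\tilde\lambda_i^j$ and $\tilde\lambda_i^k$ agree in Lemma \ref{extendplumbmetric}. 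Hence the local prescriptions glue to a single connection defined on a neighborhood of $\bigcup_{J\supsetneq I}X_J$ inside $NX_i|_{X_I}$, which I then extend over all of $X_I$ by a partition of unity, there being no constraints away from the intersections.

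In the symplectic case two extra facts make the same induction go through. First, because each bundle isomorphism $\Phi_I^{I'}$ is an isomorphism of symplectic bundles (Definition \ref{symplumdata}), pushforward by it carries symplectic connections to symplectic connections, so the glued local prescriptions remain symplectic. Second, the space of symplectic (equivalently unitary, for the chosen compatible complex structure) connections on a fixed bundle is an affine space, hence convex: a partition-of-unity combination of symplectic connections is again symplectic, so the extension step preserves symplecticity. As the seeds for the induction one uses the connections $\alpha_i|_{X_I}$ furnished by the hypothesis that the system of tubular neighborhoods is symplectic, so that the resulting $\alpha_i$ are compatible with the symplectic data in the sense of Definition \ref{symplumdata}. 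I expect the main obstacle to be verifying the consistency of the forced local prescriptions on the higher (triple and deeper) overlaps: one must check that pushing forward along different chains of bundle isomorphisms yields the same connection, which reduces, via the cocycle identity above and the equality of mixed second derivatives, to a computation identical in spirit to the well-definedness argument for the plumbing in the earlier lemmas.
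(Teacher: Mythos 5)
Your proposal is correct and follows essentially the same route as the paper: a reverse induction on stratum depth, forcing the connection near deeper strata to be the pushforward $(\Phi_J^I)_*$ of the one already built, gluing these prescriptions via the cocycle identity, and extending over the rest of the stratum by a partition of unity, with the symplectic case seeded by the connections supplied in Definition \ref{symplumdata}. The only differences are cosmetic (you track the summands $\alpha_i^{(I)}$ rather than the direct sums $\alpha_I$) plus two details the paper leaves implicit but that you rightly spell out: the overlap consistency on $X_{J\cup J'}$ and the affine structure of the space of symplectic connections, which justifies the partition-of-unity step.
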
 

\begin{proof}

Let's first prove this for the non-symplectic plumbing.
Let $\#\mathcal{I}=n$. 
The result is best proved by induction on $k$, by building connections $\alpha_{I}=\oplus_{i\in I} \alpha_i |_{X_I}$ on $NX_I$ for $\# I=n-k$, such that, for all $J\supset I$, $\alpha_I=(\Phi_J^I)_*(\alpha_I |_{X_J})$. For $k=n-1$ this yields the result.
For $k=0$, there is no compatibility restriction so any choice of $\alpha_I$ is good.

For the inductive step, assume $\alpha_J$ have been built for all $J$ with $\# J > n-k$. Let $\#I=n-k$ and let $\alpha_I=(\Phi_J^I)_*(\alpha_J)$ on a neighborhood of $X_J$ for $\#J=n-k+1$. This way we have a well defined connection on a neighborhood of $\bigcup_{\#J=n-k+1} X_J\subset X_I$. We can use a partition of unity to extend $\alpha_I$ to all of $X_I$.

If the plumbing is symplectic, the conditions coming from Definitions \ref{plumdata} and \ref{symplumdata} imply that the connections have already been built up to the level $\# J=2$. We can use the same procedure described above to extend $\alpha_i=(\Phi_J^i)_*(\alpha_J)$ to a symplectic connection on all of $NX_i$.
\end{proof}

Once we have symplectic plumbing data and compatible connections, we can decorate the plumbing with a symplectic form, thanks to the next lemma.

\begin{lemma}\label{sumofbundles}
Let $\psi_X:(N_X(X\cap Y),\omega_{\alpha_Y|_{X\cap Y}})\rightarrow (X,\omega_X)$, $\psi_Y:(N_Y(X\cap Y),\omega_{\alpha_X|_{X\cap Y}})\rightarrow (Y,\omega_Y)$ be symplectic embeddings. If $X\perp^\omega Y$, and if $\alpha_X$, $\alpha_Y$ are symplectic connections which are compatible with the data as in Definition \ref{compatibleconn}, the plumbing $N_M(X\cup Y)$ inherits a natural symplectic structure descending from $(NX,\omega_{\alpha_X})$, $(NY,\omega_{\alpha_Y})$, $(N(X\cap Y),\omega_{\alpha_Y|_{X\cap Y}\oplus\alpha_X|_{X\cap Y}})$. 
\end{lemma}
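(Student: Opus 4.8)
The plan is to show that the two symplectic forms $\omega_{\alpha_X}$ on $NX$ and $\omega_{\alpha_Y}$ on $NY$ agree on the overlap region of the plumbing, so that they glue to a single form; closedness and nondegeneracy near the zero sections are then inherited automatically from the two pieces. Write $Z=X\cap Y$. Recall that $N_M(X\cup Y)$ is obtained from $NX\sqcup NY$ by identifying $\Phi_{XY}^X(w)\sim\Phi_{XY}^Y(w)$ for $w\in NZ=(NX\oplus NY)|_Z$, where $\Phi_{XY}^X$ covers $\psi_X$ and $\Phi_{XY}^Y$ covers $\psi_Y$. Hence a pair of forms descends to a well-defined form on the plumbing precisely when
$$(\Phi_{XY}^X)^*\omega_{\alpha_X}=(\Phi_{XY}^Y)^*\omega_{\alpha_Y}\quad\text{on }NZ.$$
So the whole statement reduces to the single identity that both pullbacks equal the model form $\omega_{\alpha_Y|_Z\oplus\alpha_X|_Z}$ on $NZ$.

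The core of the argument is one explicit computation, carried out using the description of $\omega_\alpha$ established in Section \ref{sectiononbundles}, namely $\omega_{\alpha_X}=\omega_X+\langle F_{\alpha_X},\mu_X\rangle+\omega_\C$ with respect to the horizontal/vertical splitting of $\alpha_X$ (and similarly for $Y$). I would pull back $\omega_{\alpha_X}$ by $\Phi_{XY}^X$ term by term. For the base term, since $\Phi_{XY}^X$ covers $\psi_X$ and $\psi_X\colon(N_XZ,\omega_{\alpha_Y|_Z})\to(X,\omega_X)$ is a symplectomorphism, the base form pulls back to $\omega_{\alpha_Y|_Z}=\omega_Z+\langle F_{\alpha_Y|_Z},\mu_Y\rangle+\omega_\C$ on $N_XZ=NY|_Z$. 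For the curvature term, the compatibility of the connection (Definition \ref{compatibleconn}), $\alpha_X=(\Phi_{XY}^X)_*(\alpha_X|_Z)$, says exactly that $(\Phi_{XY}^X)^*\alpha_X$ is pulled back from $Z$; hence its curvature contributes $\langle F_{\alpha_X|_Z},\mu_X\rangle$ with no dependence on the $N_XZ$-directions, while the fibre term pulls back to the fibre form of $NX|_Z$. Collecting terms gives
$$(\Phi_{XY}^X)^*\omega_{\alpha_X}=\omega_Z+\langle F_{\alpha_Y|_Z},\mu_Y\rangle+\langle F_{\alpha_X|_Z},\mu_X\rangle+\omega_\C,$$
which is precisely $\omega_{\alpha_Y|_Z\oplus\alpha_X|_Z}$. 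The computation for $Y$ is identical after exchanging the roles of $X$ and $Y$: here $\psi_Y\colon(N_YZ,\omega_{\alpha_X|_Z})\to(Y,\omega_Y)$ contributes $\omega_Z+\langle F_{\alpha_X|_Z},\mu_X\rangle+\omega_\C$ on the base and the $NY$-fibre supplies $\langle F_{\alpha_Y|_Z},\mu_Y\rangle+\omega_\C$, yielding the same model form. This is the required agreement on $NZ$.

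Two points deserve care and are where I expect the real work to sit. First, the role of orthogonality $X\perp^\omega Y$: it is what guarantees that $NZ=NX|_Z\oplus^\perp NY|_Z$ is a symplectic direct sum, so that the fibre form on $NZ$ is literally $\omega_\C\oplus\omega_\C$ with no cross terms between the two families of normal directions; without it the ``$+\,\omega_\C$'' bookkeeping above would acquire mixing terms and the two pullbacks would fail to be the sum-of-bundles form. Second, one must verify that the base symplectic form $\omega_Z$ of the deepest stratum is not double counted: it appears exactly once in each pullback precisely because the inputs $\psi_X,\psi_Y$ are symplectomorphisms for $\omega_{\alpha_Y|_Z}$ and $\omega_{\alpha_X|_Z}$, so each $\psi$ already repackages $\omega_Z$ together with the curvature and fibre contribution of the transverse bundle. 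Keeping straight the accounting of which connection and which moment map attaches to which factor is the main bookkeeping obstacle; once the two pullbacks are seen to coincide, the glued form is closed and nondegenerate on a neighborhood of $X\cup Y$ because each of $\omega_{\alpha_X},\omega_{\alpha_Y}$ is, completing the proof.
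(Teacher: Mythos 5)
Your proposal is correct and takes essentially the same approach as the paper: the paper's proof likewise reduces everything to showing that the pullback of $\omega_{\alpha_X}$ under the gluing map $\xi_X\colon N(X\cap Y)\rightarrow NX$ (your $\Phi_{XY}^X$) equals the model form $\omega_{\alpha_Y|_{X\cap Y}\oplus\alpha_X|_{X\cap Y}}$, computed term by term exactly as you do --- base term via the symplectomorphism hypothesis on $\psi_X$, curvature term via the compatibility of $\alpha_X$ with the plumbing data, and fibre terms combining without cross terms thanks to $X\perp^\omega Y$. The differences are purely notational.
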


\begin{proof}
We need to check that $\omega_{\alpha_X}$ and $\omega_{\alpha_Y}$ agree with respect to $\sim$, therefore inducing a symplectic form on the quotient. Let's consider the embedding $\xi_X: N_M(X\cap Y)\rightarrow NX$ where $N_M(X\cap Y)=N_X(X\cap Y)\oplus N_Y(X\cap Y)$ comes with the connection ${\hat\alpha_X\oplus\hat\alpha_Y}$ and the induced symplectic form. Then showing that $\xi_X^*(\omega_{\alpha_X})=\omega_{\hat\alpha_X\oplus\hat\alpha_Y}$ is enough to prove the statement. Recall that $\rho_X,\rho_Y$ indicate the fiberwise hamiltonian action on $NX,NY$ respectively (and the corresponding induced action on $N(X\cap Y)$).

We are looking for $\xi_X^*(\omega_{\alpha_X})=\xi_X^*(\pi^*(\omega_X)+\rho_X F_{\alpha_X}+\omega_{\C^k})$, to compare it to
 $$\omega_{\hat\alpha_X\oplus\hat\alpha_Y}= \pi^*(\omega_{X\cap Y})+\rho_Y F_{\hat\alpha_X}+\rho_X F_{\hat\alpha_Y}+\omega_{F}$$
 (here $\omega_F$ is the symplectic structure on a fibre of $N(X\cap Y)$).

We already know that $\psi_X^*(\omega_X)=\pi^*(\omega_{X\cap Y})+\rho_Y F_{\hat\alpha_X}+\omega_\C^h$ 

from which we get $\xi_X^*(\pi_X^*(\omega_X))=
\psi_X^*(\omega_X)=\pi^*(\omega_{X\cap Y})+\rho_Y F_{\hat\alpha_X}+\omega_{\C^h,X}$ (here $\omega_{\C^h,X}$ is the symplectic form on the fibres of $N(X\cap Y)$ in the direction of $X$).

Also it is easy to see that $\xi_X^*(\omega_{\C})=\omega_{\C^k,Y}$ (respectively the symplectic form on the vertical fibres of $NX$ and the symplectic form  on the fibres of $N(X\cap Y)$ in the direction orthogonal to $X$).

As for $\xi_X^*(\rho_X F_{\alpha_X})$, recall that it vanishes over $N_{X,z}(X\cap Y)$ by assumption, therefore 

$\rho_X F_{\alpha_X}=\rho_X F_{\alpha_X}|_{\pi^{-1}(X\cap Y)}\circ \pi_{\text{vert}}$ which also implies 

$\xi_X^*(\rho_X F_{\alpha_X})=\rho_X F_{\hat\alpha_Y}$.

Therefore
$$\xi_X^*(\omega_{\alpha_X})=\xi_X^*(\pi^*(\omega_X)+\rho_X F_{\alpha_X}+\omega_{\C^k})=$$
$$\pi^*(\omega_{X\cap Y})+\rho_Y F_{\hat\alpha_X}+\omega_{\C^k,X}+\rho_XF_{\hat\alpha_Y}+\omega_{\C^h,Y}=\omega_{\hat\alpha_X\oplus\hat\alpha_Y}.$$
Where the very last equality uses the fact that $X\perp^\omega Y$, therefore the symplectic structure on a fibre of $N(X\cap Y)$ is $\omega_F=\omega_{\C^k,X}+\omega_{\C^h,Y}$.
\end{proof}

\begin{prop}\label{symplum}
Let $\{\Phi_I^{I'},\psi_I^{I'}\}$ be a set of symplectic plumbing data for $\{X_i\}_{i\in\mathcal I}$ as in Definitions \ref{plumdata}, \ref{symplumdata}. Let $\alpha_i$ be a compatible connection on $NX_i$ for each $i$, as in Definition \ref{compatibleconn}. Then, the plumbing inherits a symplectic form $\omega_{\mathcal I}$.
\end{prop}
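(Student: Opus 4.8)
The plan is to equip each normal bundle $NX_i$ with the symplectic form $\omega_{\alpha_i}$ furnished by the connection $\alpha_i$ via the corollaries of Section \ref{sectiononbundles}, and then to verify that these forms are matched by the gluing relation $\sim$, so that they descend to a single $2$-form $\omega_{\mathcal I}$ on the quotient $N(\bigcup_i X_i)=\bigcup_i NX_i/\sim$. Since each $\omega_{\alpha_i}$ is closed and non-degenerate in a neighborhood of the zero section of $NX_i$, and these are open conditions preserved by the identifications, the descended form $\omega_{\mathcal I}$ is automatically symplectic near $\bigcup_i X_i$. So everything reduces to compatibility of the forms on overlaps.

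The heart of the matter is to produce, for each subset $I\subseteq\mathcal I$, a canonical form on $NX_I=\bigoplus_{i\in I}NX_i|_{X_I}$, namely $\omega_I:=\omega_{\oplus_{i\in I}\alpha_i|_{X_I}}$ assembled from the restricted connections as in the displays at the end of Section \ref{sectiononbundles}, and to show that the gluing maps pull the individual forms back to it. Concretely, I would prove by induction on $|I|$ the claim that $(\Phi_I^i)^*\omega_{\alpha_i}=\omega_I$ for every $i\in I$. The base case $|I|=2$ is exactly Lemma \ref{sumofbundles}, which uses $X_i\perp^\omega X_j$ and shows that both $(\Phi_{ij}^i)^*\omega_{\alpha_i}$ and $(\Phi_{ij}^j)^*\omega_{\alpha_j}$ equal the single form $\omega_{ij}$ on $NX_{ij}$; hence the forms agree on the region glued near $X_{ij}$.

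For the inductive step I would exploit the plumbing compatibility $\Phi_I^i=\Phi_{I'}^i\circ\Phi_I^{I'}$ with $I'=I\setminus\{j\}$ and $i\in I'$, writing $(\Phi_I^i)^*\omega_{\alpha_i}=(\Phi_I^{I'})^*\big((\Phi_{I'}^i)^*\omega_{\alpha_i}\big)=(\Phi_I^{I'})^*\omega_{I'}$ by the inductive hypothesis. It then remains to check $(\Phi_I^{I'})^*\omega_{I'}=\omega_I$, which is again a two-fold gluing, now of $X_{I'}$ against $X_j$ along $X_I$: here $NX_{I'}|_{X_I}\oplus^\perp NX_j|_{X_I}=NX_I$ is an orthogonal splitting inherited from pairwise orthogonality (as in the transversality argument of Section 3), so $X_{I'}\perp^\omega X_j$ and Lemma \ref{sumofbundles} applies with $X=X_{I'}$, $Y=X_j$, treating the assembled pair $(\omega_{I'},\oplus_{i\in I'}\alpha_i|_{X_{I'}})$ as the data $(\omega_X,\alpha_X)$ of that lemma. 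The connection-compatibility hypothesis $\alpha_i=(\Phi_I^i)_*(\alpha_i|_{X_I})$ is precisely what makes the curvature term $\rho F_{\alpha_i}$ vanish in the required normal directions, which is the input Lemma \ref{sumofbundles} uses. Since $(\Phi_I^i)^*\omega_{\alpha_i}$ is thereby independent of $i\in I$, the forms agree under every identification $\Phi_I^i(x)\sim\Phi_I^j(x)$ and glue to a well-defined closed, fibrewise non-degenerate form $\omega_{\mathcal I}$.

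The step I expect to be the main obstacle is this inductive reduction of higher overlaps to the two-fold Lemma \ref{sumofbundles}: one must check carefully that when $X_{I'}$ is regarded as a single symplectic submanifold meeting $X_j$, its assembled form $\omega_{I'}$ genuinely has the shape $\omega_{\alpha_{X_{I'}}}$ for the assembled connection, that $\psi_I^{I'}$ is the required symplectomorphism (guaranteed by the symplectic tubular-neighborhood condition of Definition \ref{symplumdata}), and that the identification $\omega_I=\omega_{\hat\alpha_{X_{I'}}\oplus\hat\alpha_j}$ is the same canonical form built from all of $\{\alpha_i|_{X_I}\}_{i\in I}$. Once this bookkeeping is in place the argument is a clean induction; the only real pitfall would be an inconsistent choice of identifications at a triple (or higher) point, which is ruled out by the cocycle relation $\Phi_{I'}^{I''}\circ\Phi_I^{I'}=\Phi_I^{I''}$ already used to show the plumbing is a manifold.
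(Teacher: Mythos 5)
Your proposal is correct and follows essentially the same route as the paper: the paper's proof also builds $\omega_{\alpha_i}$ on each $NX_i$ and invokes Lemma \ref{sumofbundles} to conclude that the forms agree wherever the gluing identifies them, so that they descend to $\omega_{\mathcal I}$. Your explicit induction on $|I|$, using the cocycle relation $\Phi_I^i=\Phi_{I'}^i\circ\Phi_I^{I'}$ to reduce higher overlaps to the two-fold case, is a careful spelling-out of bookkeeping that the paper's three-sentence proof leaves implicit, not a different argument.
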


\begin{proof}
Let's construct the symplectic forms $\omega_{\alpha_i}$ on each $NX_i$.
Following Lemma \ref{sumofbundles}, $\omega_{\alpha_i}=\omega_{\alpha_j}$ on the plumbing (wherever they are defined). This means that all of the $\omega_{\alpha_i}$ glue onto a symplectic form $\omega_{\mathcal I}$ defined on the total space of the plumbing.  
\end{proof}

At this point we have enough information to state the following theorem. 

\begin{thm}[Neighborhood theorem for orthogonal submanifolds]\label{main}
Let $\{X_i\}_{i\in\mathcal I}$ be a finite set of symplectic submanifolds of $(M,\omega)$ with orthogonal intersections. Then there exist:
\begin{itemize}
\item symplectic plumbing data yielding a plumbing $N(\bigcup_i X_i)$;
\item compatible symplectic connections which induce a symplectic form $\omega_{\mathcal I}$ on $N(\bigcup_i X_i)$;
\item a neighborhood of $\bigcup_i X_i$ in $(M,\omega)$ symplectomorphic to a neighborhood of $\bigcup_i X_i$ inside of 
$(N(\bigcup_i X_i), \omega_\mathcal I)$ (such symplectomorphism is relative to $\bigcup_i X_i$).
\end{itemize}
\end{thm}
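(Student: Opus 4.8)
The plan is to imitate the proof of Weinstein's symplectic neighborhood theorem in its two-step form: first assemble the standard model, namely the plumbing $N(\bigcup_i X_i)$ carrying the form $\omega_{\mathcal I}$ together with a smooth embedding $F$ into $M$ whose linearization along each $X_i$ already matches the symplectic data, and then correct $F$ to a genuine symplectomorphism by the Moser argument of Lemma \ref{lemma1}. The first thing I would do is fix the splitting. Since $\omega$ is symplectic and the intersections are orthogonal in the sense of Definition \ref{orthog}, the symplectic-orthogonal complement furnishes a canonical splitting $g_i \colon N_M X_i \cong (TX_i)^{\perp_\omega} \hookrightarrow TM|_{X_i}$ for each $i$. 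I would then verify two properties of the collection $\{g_i\}$: that it is compatible with the plumbing in the sense of Definition \ref{compatiblesplitting}, and that it meets the extra hypothesis $g_i(N_{X_j}X_i)\subset TX_j$ of Theorem \ref{plumbingsexist}. Both are forced by orthogonality: the decomposition $TM = TX_{ij}\oplus^{\perp}NX_i\oplus^{\perp}NX_j$ makes the symplectic-orthogonal splittings for distinct indices mutually compatible and makes each one respect the tangent spaces $TX_j$. This is precisely the place where orthogonality, as opposed to mere transversality, is indispensable.

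Next I would produce symplectic plumbing data. Feeding the splitting $\{g_i\}$ into the induction of Theorem \ref{plumbingsexist} yields plumbing data compatible with $\{g_i\}$ together with an embedding $F\colon N(\bigcup_i X_i)\to M$ satisfying $d_{X_i}F=(g_i,\mathrm{id})$. The delicate point is to arrange simultaneously that this data is symplectic in the sense of Definition \ref{symplumdata}: the bundle isomorphisms $\Phi_I^{I'}$ must be isomorphisms of symplectic bundles, and the tubular maps $\psi_I^{I'}$ must be symplectomorphisms with respect to the forms $\omega_{\alpha_I}$. I would achieve this by a nested induction on the depth of the intersection strata, applying Weinstein's neighborhood theorem inside each symplectic manifold $X_{I'}$ to the symplectic submanifold $X_I\subset X_{I'}$ so as to upgrade each smooth $\psi_I^{I'}$ to a symplectomorphism, while choosing each $\Phi_I^{I'}$ symplectically. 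With symplectic plumbing data in hand, Lemma \ref{nconnections} supplies compatible symplectic connections $\{\alpha_i\}$, and Proposition \ref{symplum} (through Lemma \ref{sumofbundles}) then equips the plumbing with the symplectic form $\omega_{\mathcal I}$.

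The third step is to compare the two symplectic forms on the plumbing, the model $\omega_{\mathcal I}$ and the pullback $F^*\omega$, and to show they agree along $\bigcup_i X_i$ in the sense of Definition \ref{agreealong}. At a point of $X_i$ the moment-map (or Hamiltonian) terms vanish, so $\omega_{\mathcal I}$ restricts to the orthogonal sum $\omega_{X_i}\oplus\omega_{\text{fibre}}$ relative to the splitting $T(NX_i)|_{X_i}\cong TX_i\oplus NX_i$; on the other hand $d_{X_i}F=(g_i,\mathrm{id})$ carries this splitting to the symplectic-orthogonal decomposition $TM|_{X_i}=TX_i\oplus^{\perp}NX_i$, so that $F^*\omega$ restricts to the very same orthogonal sum. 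The same verification runs coordinate by coordinate on the deeper strata $X_I$, using that the chosen connections are compatible so that the curvature contributions vanish along $X_I$. With the two forms agreeing along $\bigcup_i X_i$, Lemma \ref{lemma1} produces a diffeomorphism $\phi$ fixing $\bigcup_i X_i$ pointwise with $\phi^*(F^*\omega)=\omega_{\mathcal I}$ on a neighborhood, whence $F\circ\phi\colon (N(\bigcup_i X_i),\omega_{\mathcal I})\to(M,\omega)$ is the desired symplectomorphism, relative to $\bigcup_i X_i$ since both $F$ and $\phi$ restrict to the identity there.

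The main obstacle I anticipate lies in the second step: the orthogonal-splitting compatibility needed to pin down $d_{X_i}F$ and the symplectic compatibility of Definition \ref{symplumdata} are two independent families of conditions that must be threaded together through the induction over the intersection strata, and ensuring that all the $\psi_I^{I'}$ and $\Phi_I^{I'}$ remain mutually compatible at the triple and higher intersections is the genuinely technical heart of the argument. By contrast, once the standard model $(N(\bigcup_i X_i),\omega_{\mathcal I})$ and the embedding $F$ have been constructed, the concluding Moser step is a direct and clean application of Lemma \ref{lemma1}.
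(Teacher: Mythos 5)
Your proposal is correct and follows essentially the same route as the paper: the symplectic-orthogonal splitting $g_i$, the stratum-by-stratum induction producing symplectic plumbing data, the rigid embedding with $d_{X_i}F=(g_i,\mathrm{id})$ from Theorem \ref{plumbingsexist}/Proposition \ref{rigidembedding}, the form $\omega_{\mathcal I}$ via Lemma \ref{nconnections} and Proposition \ref{symplum}, and the final Moser correction via Lemma \ref{lemma1}. The only substantive difference is organizational: the paper packages each inductive step as Proposition \ref{symplembedding}, a multi-submanifold embedding statement applied recursively inside each stratum $X_I$ (so the Moser argument runs at every level, not just once at the end), and it is this recursion---rather than separate applications of single-submanifold Weinstein inside each $X_{I'}$---that secures the mutual compatibility at triple and higher intersections which you correctly flag as the technical heart.
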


Just like we did for Propositions \ref{smoothplumbingsexist} and Theorem \ref{plumbingsexist}, we wish to prove this theorem by induction. The main step in the induction is provided by the following proposition, a symplectic analog of Lemma \ref{smoothembedd}.

\begin{prop}\label{symplembedding}
Let $\{X_i\}_{i\in\mathcal I}$ be a finite set of symplectic submanifolds of $(M,\omega)$ with orthogonal intersections. Let $\{\psi_I^{I'}\}_{I'\subset I\subseteq \mathcal I}, \{\Phi_I^{I'}\}_{I'\subset I\subseteq \mathcal I, \# I'\geq 2}$ be (partial) symplectic plumbing data for $\{X_i\}_i$. Then there exist:
\begin{itemize}
\item maps $\Phi_I^i:NX_I\rightarrow NX_i$ completing the data to that of a symplectic plumbing;
\item compatible symplectic connections which induce a symplectic form $\omega_{\mathcal I}$ on the symplectic plumbing;
\item a symplectic embedding $f:(N(\bigcup_iX_i),\omega_{\mathcal I})\rightarrow (M,\omega)$, defined in a neighborhood of $\bigcup_i X_i$, such that $F|_{X_i}=id$.
\end{itemize}
\end{prop}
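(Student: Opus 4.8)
The plan is to imitate the smooth existence theorem (Theorem \ref{plumbingsexist}) and then upgrade the resulting embedding to a symplectomorphism with one application of the Moser argument of Lemma \ref{lemma1}. The ambient form $\omega$ hands us a canonical splitting for free: take $g_i:N_MX_i\cong (TX_i)^{\perp_\omega}\hookrightarrow TM|_{X_i}$ to be the $\omega$-orthogonal inclusion. The orthogonality hypothesis (Definition \ref{orthog}) gives $TM=TX_{ij}\oplus^{\perp}NX_i\oplus^{\perp}NX_j$ along each $X_{ij}$, so $g_i$ carries the $N_{X_j}X_i$-directions into $TX_j$; this is precisely the hypothesis under which Theorem \ref{plumbingsexist} produces a rigid embedding. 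Thus the whole problem reduces to completing the given partial data into \emph{symplectic} plumbing data compatible with this particular $g_i$, and then comparing the two resulting symplectic forms.

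First I would complete the bundle data. Running the inductive construction of Theorem \ref{plumbingsexist} at the top level, with the deeper maps $\{\Phi_I^{I'}\}_{\#I'\geq 2}$ held fixed as given and the splitting $g_i$ prescribed, produces the missing isomorphisms $\Phi_I^i:NX_I\to NX_i$ compatible with $g_i$ together with a smooth embedding $F:N(\bigcup_iX_i)\to M$ satisfying $d_{X_i}F=(g_i,\mathrm{id})$. The point requiring care is that these completed $\Phi_I^i$ must be isomorphisms of \emph{symplectic} bundles, so that the collection is symplectic plumbing data in the sense of Definition \ref{symplumdata}. Since each $\Phi_I^i$ is built to cover a symplectic retraction $\psi_I^i$ and to match the $\omega$-orthogonal splitting along $X_i$, it preserves the fibrewise symplectic structure; I would verify this directly on fibres, using that the given $\psi_I^{I'}$ are symplectomorphisms.

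Next I would install the model form and compare. By Lemma \ref{nconnections} there exist compatible symplectic connections $\{\alpha_i\}$, and by Proposition \ref{symplum} (whose local engine is Lemma \ref{sumofbundles}) the forms $\omega_{\alpha_i}$ glue to a single symplectic form $\omega_{\mathcal I}$ on the plumbing. The crux is the pointwise identity $F^*\omega=\omega_{\mathcal I}$ \emph{along} $\bigcup_iX_i$ in the sense of Definition \ref{agreealong}: at a point of $X_i$ the relation $d_{X_i}F=(g_i,\mathrm{id})$ identifies the Ehresmann splitting $T(NX_i)|_{X_i}\cong NX_i\oplus TX_i$ with the $\omega$-orthogonal splitting $TM|_{X_i}\cong (TX_i)^{\perp_\omega}\oplus TX_i$, under which $\omega$ restricts to $\omega_{X_i}\oplus\omega_{\mathrm{fibre}}$, exactly the restriction of $\omega_{\mathcal I}$ to $X_i$. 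At the deeper strata one reads off the fibrewise splitting $\omega_F=\omega_{\C^k,X}+\omega_{\C^h,Y}$ just as in the proof of Lemma \ref{sumofbundles}, and this is the step in which orthogonality of the several normal bundles is indispensable.

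Finally I would apply Moser. The forms $\omega_1=\omega_{\mathcal I}$ and $\omega_2=F^*\omega$ are symplectic on a neighborhood of $\bigcup_iX_i$ in the plumbing and agree along $\bigcup_iX_i$, so Lemma \ref{lemma1} yields a diffeomorphism $\phi$ with $\phi|_{\bigcup_iX_i}=\mathrm{id}$ and $\phi^*(F^*\omega)=\omega_{\mathcal I}$; setting $f=F\circ\phi$ gives the desired symplectic embedding fixing each $X_i$. I expect the main obstacle to be the middle step: arranging the completion of the partial data to stay inside the symplectic category \emph{while} forcing the rigid linearization $d_{X_i}F=(g_i,\mathrm{id})$ for the $\omega$-orthogonal $g_i$. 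It is exactly this coordination that makes $F^*\omega$ and $\omega_{\mathcal I}$ agree along $\bigcup_iX_i$, and hence lets the single Moser step close the argument.
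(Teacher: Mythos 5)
Your proposal is correct and follows essentially the same route as the paper's own proof: the $\omega$-orthogonal splitting $g_i$, completion of the partial data to symplectic plumbing data compatible with that splitting, compatible connections via Lemma \ref{nconnections} and the glued form $\omega_{\mathcal I}$ via Proposition \ref{symplum}, a rigid embedding $F$ with $d_{X_i}F=(g_i,\mathrm{id})$ from Proposition \ref{rigidembedding}, and a single application of Lemma \ref{lemma1} to correct $F^*\omega$ to $\omega_{\mathcal I}$, setting $f=F\circ\phi$. The only cosmetic difference is that you package the completion-plus-rigid-embedding step through Theorem \ref{plumbingsexist}, whereas the paper performs the extension of the bundle isomorphisms inline and then invokes Proposition \ref{rigidembedding} directly.
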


\begin{proof}
Notice that the symplectic form $\omega$ induces orthogonal splittings of the tangent bundle along any $X_i$, i.e. we get standard maps $g_i:NX_i\rightarrow TM|_{X_i}$ (defined as $g_i(v)=w$ where $\pi(w)=v$ and $w\perp^{\omega}TX_i$). Since the plumbing data is symplectic, it follows that the splitting obtained through the symplectic form must be compatible (as described in Definition \ref{compatiblesplitting}) with the partial plumbing data.
There are many ways to extend the data by choosing bundle isomorphisms $\{\Phi_I^i\}_{i,I}$ extending the given ones. We want to extend in such a way that the full plumbing data is compatible with the splitting $\{g_i\}_{i\in\mathcal I}$. Because the splitting is itself symplectic, the compatible bundle isomorphisms can be chosen to be symplectic.

Now that the plumbing data is complete, we can choose compatible symplectic connections $\{\alpha_i\}_{i\in\mathcal I}$ (as constructed in Lemma \ref{nconnections}). The plumbing then inherits a symplectic form $\omega_{\mathcal I}$ (see Proposition \ref{symplecticformonplum}). We can invoke Proposition \ref{rigidembedding} to embed the plumbing in a rigid way, i.e. we obtain a map $F:N(\bigcup_iX_i)\rightarrow M$ such that $F|_{X_i}=id$ and $d_{X_i}F=g_i\times id$. This equality on the derivatives insures that $F^*(\omega)=\omega_{\mathcal I}$ along $\bigcup_i X_i$ (see Definition \ref{agreealong}). We can apply Lemma \ref{lemma1} to $F^*(\omega)$ and $\omega_{\mathcal I}$ on $N(\bigcup_i X_i)$ to get a map $\phi:N(\bigcup_i X_i)\rightarrow N(\bigcup_i X_i)$ such that $\phi|_{\bigcup_i X_i}=id$ and $\phi^*(F^*(\omega))=\omega_{\mathcal I}$.

Let $f=F\circ \phi:N(\bigcup_i X_i)\rightarrow M$.

\end{proof}

\begin{proof}[Proof of Theorem \ref{main}]

As in Proposition \ref{symplembedding}, we'll use the splitting maps $g_i:NX_i\rightarrow T_{X_i}M$ given by the orthogonal splitting on $T_{X_i}M\cong TX_i\oplus TX_i^{\perp_{\omega}}$.

We proceed by induction: for any $1\leq k\leq n$ one can build, for all $I$ such that $\# I=n-k$, a symplectic $k$-plumbing of $\{X_{I\cup\{i\}}\}_{i \notin I}$ in $X_{I}$, and then embed such plumbing symplectically into $X_{I}$.
For $k=1$, this is the usual symplectic neighborhood theorem, applied to $X_\mathcal I\subset X_{\mathcal I\backslash i}$.
For $k=n$, this yields the symplectic $n$-plumbing; Proposition \ref{symplembedding} then yields a symplectic embedding $F: N(\bigcup_i X_i)\rightarrow M$.

For the inductive step, assume that all $\{\psi_I^{I'}\}_{I'\subset I\subset \mathcal I, \#I'\geq n-k}$ have been constructed as symplectomorphisms with respect to $\alpha_{I'}=\oplus_{i\in I'}\alpha_i$. Similarly, assume  that all bundle isomorphisms $\{\Phi_I^{I'}\}_{I'\subset I\subset \mathcal I, \#I'\geq n-k}$ have been built compatibly.
We can then use Proposition \ref{symplembedding} to extend the system of bundle isomorphisms to $\{\Phi_I^{I'}\}_{I'\subset I\subset \mathcal I, \#I'\geq n-k-1}$ (compatible with $\{g_i\}_i$). Proposition \ref{symplembedding} also yields extensions of the connections, and corresponding symplectic embeddings $\{\psi_I^{I'}\}_{I'\subset I\subset \mathcal I, \#I'\geq n-k-1}$.

This concludes the induction.

\end{proof}

\begin{cor}[of Theorem \ref{main}]\label{maincor}
Let $\{X_i\}_{i\in\mathcal I}$ be a finite family of orthogonal symplectic submanifolds in $(M,\omega)$. Then there exist connections $\alpha_i$ on $NX_i$ and symplectic neighborhood embeddings $$\phi_i:(NX_i,\omega_{\alpha_i})\rightarrow (M,\omega)$$ which are pairwise compatible. This means that there are symplectic neighborhood embeddings $$\phi_{ij}:(NX_{ij},\omega_{\alpha_i+\alpha_j})\rightarrow (M,\omega)$$ which factor as $\phi_{ij}=\phi_i\circ \xi_{ij}^i=\phi_j\circ\xi_{ij}^j$, where $\xi_{ij}^i:NX_{ij}\cong NX_i\oplus NX_j \rightarrow NX_i$ and $\xi_{ij}^j:NX_{ij}\cong NX_i\oplus NX_j \rightarrow NX_j$ are bundle isomorphisms.
\end{cor}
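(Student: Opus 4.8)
The plan is to read the corollary off directly from Theorem \ref{main} by restricting the global plumbing symplectomorphism to the individual normal bundles. First I would invoke Theorem \ref{main} to obtain symplectic plumbing data $\{\psi_I^{I'},\Phi_I^{I'}\}$, a family of compatible symplectic connections $\{\alpha_i\}$ inducing a symplectic form $\omega_{\mathcal I}$ on the plumbing $N(\bigcup_i X_i)$, and a symplectomorphism $\Psi$ from a neighborhood of the zero section in $(N(\bigcup_i X_i),\omega_{\mathcal I})$ onto a neighborhood of $\bigcup_i X_i$ in $(M,\omega)$ that restricts to the identity on $\bigcup_i X_i$. Writing $\iota_i:NX_i\hookrightarrow N(\bigcup_i X_i)$ and $\iota_{ij}:NX_{ij}\hookrightarrow N(\bigcup_i X_i)$ for the tautological inclusions of the normal bundles into the plumbing, I would then set $\phi_i:=\Psi\circ\iota_i$ and $\phi_{ij}:=\Psi\circ\iota_{ij}$.

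Next I would verify that these are symplectic with the asserted forms. By the construction of $\omega_{\mathcal I}$ in Proposition \ref{symplum}, which glues the forms $\omega_{\alpha_i}$ together via Lemma \ref{sumofbundles}, the pullback $\iota_i^*\omega_{\mathcal I}$ is exactly $\omega_{\alpha_i}$, while $\iota_{ij}^*\omega_{\mathcal I}$ is $\omega_{\alpha_i|_{X_{ij}}\oplus\alpha_j|_{X_{ij}}}=\omega_{\alpha_i+\alpha_j}$. Since $\Psi$ is a symplectomorphism fixing $\bigcup_i X_i$, this makes $\phi_i:(NX_i,\omega_{\alpha_i})\to(M,\omega)$ and $\phi_{ij}:(NX_{ij},\omega_{\alpha_i+\alpha_j})\to(M,\omega)$ symplectic embeddings restricting to the identity on $X_i$ and $X_{ij}$ respectively.

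The factorization is then a matter of unwinding the equivalence relation defining the plumbing. By Definition \ref{plumdata}, a point $x\in NX_{ij}$ is identified in $\bigcup_i NX_i/\!\sim$ with $\Phi_{ij}^i(x)\in NX_i$ and with $\Phi_{ij}^j(x)\in NX_j$; that is, $\iota_{ij}=\iota_i\circ\Phi_{ij}^i=\iota_j\circ\Phi_{ij}^j$. Setting $\xi_{ij}^i:=\Phi_{ij}^i$ and $\xi_{ij}^j:=\Phi_{ij}^j$, which are isomorphisms of symplectic bundles because the plumbing data is symplectic (Definition \ref{symplumdata}), and composing with $\Psi$ gives $\phi_{ij}=\phi_i\circ\xi_{ij}^i=\phi_j\circ\xi_{ij}^j$, which is precisely the pairwise compatibility asserted.

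I expect no genuine obstacle here: all the substantive work, namely producing the symplectic plumbing, the compatible connections, and the symplectomorphism onto a neighborhood, is already carried out in Theorem \ref{main}. The only points requiring care are bookkeeping ones, confirming that $\omega_{\mathcal I}$ restricts to $\omega_{\alpha_i}$ and to $\omega_{\alpha_i+\alpha_j}$ on the respective subbundles (guaranteed by Lemma \ref{sumofbundles}) and that the gluing maps $\Phi_{ij}^i$ are symplectic bundle isomorphisms, so that the factorization is genuinely through morphisms of symplectic vector bundles.
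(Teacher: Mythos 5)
Your proposal is correct and matches the paper's own proof essentially verbatim: the paper likewise restricts the plumbing symplectomorphism from Theorem \ref{main} to each $NX_i\subset N(\bigcup_i X_i)$ to get the $\phi_i$, and takes $\xi_{ij}^i,\xi_{ij}^j$ to be the plumbing's bundle isomorphisms (written there as $(\psi_{ij}^i)_*$, i.e.\ your $\Phi_{ij}^i$), with compatibility following from the fact that the $\phi_i$ glue on the plumbing. Your additional bookkeeping, checking via Lemma \ref{sumofbundles} that $\omega_{\mathcal I}$ restricts to $\omega_{\alpha_i}$ and $\omega_{\alpha_i+\alpha_j}$, is exactly what the paper leaves implicit.
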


\begin{proof}
This is just a rephrasing of Theorem \ref{main}: the functions $\phi_i$ are the restriction of $\varphi:(N_M(\bigcup_i X_i),\omega_{\mathcal I})\rightarrow (M,\omega)$ to each $NX_i\subset N_M(\bigcup_i X_i)$. The compatibility follows from the fact that such $\phi_i$ glue to a function on the plumbing. In particular one can construct $\phi_{ij}:=\phi_i\circ(\psi_{ij}^i)_*$ where $(\psi_{ij}^i)_*$ is defined in the previous discussion as a bundle isomorphism. So just let $\xi_{ij}^i:=(\psi_{ij}^i)_*, \xi_{ij}^j:=(\psi_{ij}^j)_*$.
\end{proof}

\section{G-equivariant formulation}

In this section we assume that a compact Lie group $G$ acts on $M$; we will show that the plumbing construction can be carried out equivariantly. Moreover, if $(M, \omega)$ is symplectic and $G$ acts by symplectomorphisms (not necessarily hamiltonian), we will show that the symplectic construction can also be carried out equivariantly.

\subsection{The case of one submanifold}

First of all let's review what happens in the standard case of one submanifold $X\subset M$.

\begin{thm}[G-equivariant tubular neighborhood]\label{equivnbhd}
Let $X\subset M$ be a submanifold preserved by the $G$-action. Consider the linearized action $G\circlearrowright NX$. There is a $G$-equivariant diffeomorphism $\phi:NX\rightarrow M$, defined close to the $0$-section.
\end{thm}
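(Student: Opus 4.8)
The plan is to imitate the classical proof of the tubular neighborhood theorem, but to build all the auxiliary data $G$-invariantly, exploiting the compactness of $G$ to average. The only structure needed to produce the tubular neighborhood embedding is a Riemannian metric, and the exponential map of a $G$-invariant metric is automatically equivariant; so the whole argument reduces to arranging invariance of the metric.

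First I would produce a $G$-invariant metric on $M$. Starting from any Riemannian metric $g_0$, set $g=\int_G a^*g_0\,da$, where $da$ is the normalized Haar measure on the compact group $G$. This integral converges (by compactness) and yields a metric $g$ with $a^*g=g$ for all $a\in G$, so that $G$ acts by isometries. Since $X$ is $G$-invariant and $g$ is $G$-invariant, the orthogonal complement $T_X M^{\perp_g}\subset TM|_X$ is preserved by $da$ for every $a\in G$, and the natural isomorphism $NX\cong T_X M^{\perp_g}$ identifies the linearized action on $NX$ with the restriction of $da$ to $T_X M^{\perp_g}$.

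Next I would take $\phi=\exp_g\colon NX\cong T_X M^{\perp_g}\rightarrow M$, the exponential map restricted to a neighborhood of the zero section, which is a diffeomorphism onto a tubular neighborhood of $X$ by the ordinary tubular neighborhood theorem. The key equivariance step is that isometries intertwine the exponential map with their own derivatives: for $a\in G$, the curve $t\mapsto a(\exp_g(x,tv))$ is the $g$-geodesic through $a(x)$ with initial velocity $da(v)$, whence $a(\exp_g(x,v))=\exp_g(a(x),da(v))$. Restricting to $v\in T_X M^{\perp_g}$ and using the identification of the previous step, this reads exactly as $a\circ\phi=\phi\circ(da)$, so $\phi$ is equivariant for the linearized action.

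The only genuine input is the compactness of $G$, used to average the metric; everything else is the standard, non-equivariant tubular neighborhood theorem together with the elementary fact that isometries send geodesics to geodesics. I expect no serious obstacle. The two points to handle with care are matching the abstract linearized action on $NX$ with the concrete action on $T_X M^{\perp_g}$, and shrinking the domain $G$-invariantly; the latter is automatic, since one may take the sublevel set $\{\,|v|_g<\delta\,\}$, which is $G$-invariant precisely because $g$ is invariant.
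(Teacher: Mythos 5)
Your proposal is correct and follows essentially the same route as the paper's own proof: average an arbitrary metric over the compact group via Haar measure to obtain a $G$-invariant metric, then observe that the exponential map of an invariant metric is automatically equivariant because isometries carry geodesics to geodesics. Your two additional remarks (matching the linearized action on $NX$ with the action on $T_XM^{\perp_g}$, and the $G$-invariance of the sublevel set $\{|v|_g<\delta\}$) are points the paper leaves implicit, and they tighten the argument without changing it.
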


\begin{proof}
Follow the proof of Lemma \ref{lemma1}.
Usually, one would consider any metric $\lambda$ and construct the corresponding exponential map. For the exponential map to be $G$-equivariant we need a $G$-invariant metric. To construct one, take any metric $\lambda$ and consider $\tilde\lambda(v,w)=\int_G \lambda(g\cdot v,g\cdot w) dg$ where $dg$ is given by any measure on the group $G$, invariant by left translation (e.g. it can be the Haar measure). Let's check that $\tilde\lambda$ is $G$-invariant: $\tilde\lambda(h\cdot v),h\cdot w)=\int_G \lambda(h\cdot g\cdot v),h\cdot g\cdot w) d\nu=\int_G \lambda(g\cdot v,g\cdot w) dg=\tilde\lambda(v,w)$. Let's check that the correspondent exponential map is then $G$-equivariant: the key is that if $\tilde\lambda$ is $G$-invariant, then $G$ maps geodesics to geodesics. For $x\in X$, $v\in N_xX$, the point $exp(g\cdot x, g\cdot v)$ is given by flowing along a geodesic starting from $g\cdot x$ in the direction of $g\cdot v$ for time $|g\cdot v|=|v|$, which is the same as taking the geodesic at $x$ in the direction of $v$, then traslating it by the element $g$. Therefore, $exp(g\cdot x, g\cdot v)=g\cdot exp(x,v)$.
\end{proof}

Let's also consider the symplectic case:

\begin{thm}[G-equivariant symplectic tubular neighborhood]\label{tubularnbhd}
Let $X\subset (M,\omega)$ be a symplectic submanifold preserved by a symplectic $G$-action. Consider the linearized action $G\circlearrowright NX$. There is a $G$-equivariant symplectomorphism $\phi:NX\rightarrow M$, defined close to the $0$-section. The derivative of $\phi$ along $X$ is the identity.
\end{thm}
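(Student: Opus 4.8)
The plan is to run the standard Moser/Weinstein argument for a single symplectic submanifold, but carefully at each stage choosing all auxiliary data ($G$-invariant metric, the interpolating one-form) to be $G$-equivariant, so that the final symplectomorphism is automatically equivariant. First I would produce a $G$-equivariant smooth tubular neighborhood embedding $\phi_0:NX\to M$ with $\phi_0|_X=\mathrm{id}$ and $d\phi_0|_X=\mathrm{id}$. This is exactly Theorem \ref{equivnbhd}: average any metric over $G$ using the Haar measure to obtain a $G$-invariant metric $\tilde\lambda$, whose exponential map is $G$-equivariant and restricts to a diffeomorphism near the zero section. Here I would additionally want $d\phi_0|_X=\mathrm{id}$ under the identification of $NX$ with the $\tilde\lambda$-orthogonal complement $TX^{\perp}$; since the symplectic orthogonal $TX^{\perp_\omega}$ is naturally isomorphic to $NX$ and the averaged metric can be chosen compatible with $\omega$ (average an $\omega$-compatible metric, which stays $\omega$-compatible since $G$ acts symplectically), the normal directions match up and the derivative along $X$ is the identity.

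Next I would set $\omega_0=\omega$ (pushed to $NX$ via $\phi_0$, i.e. work on $NX$ with $\omega_0=\phi_0^*\omega$) and $\omega_1=\omega_{NX}$ the linear symplectic form on $NX$ coming from the symplectic bundle structure and an equivariantly chosen connection. Both forms are closed, $G$-invariant, and agree along $X$ (they have the same restriction to $TM|_X$ because $d\phi_0|_X=\mathrm{id}$). The core is then the equivariant version of Lemma \ref{lemma1} applied to this single submanifold: I want a $G$-invariant primitive $\sigma$ with $d\sigma=\omega_0-\omega_1$ near $X$, $\sigma=0$ along $X$, and then feed it into Moser. For a single $X$ there is a genuine smooth radial retraction $\phi_t(\exp(q,v))=\exp(q,tv)$ onto $X$, which is $G$-equivariant precisely because $\tilde\lambda$ is $G$-invariant; integrating the homotopy operator $\tilde\sigma(m)=\int_0^1 \iota_{\frac{d}{dt}\phi_t}(\omega_0-\omega_1)\,dt$ along this equivariant retraction yields a primitive that is automatically $G$-invariant, since each $\phi_t$ commutes with $G$ and $\omega_0-\omega_1$ is $G$-invariant. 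Equivariance of the resulting Moser vector field and flow then follows, giving a $G$-equivariant $\psi$ with $\psi^*\omega_0=\omega_1$.

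The final map is $\phi=\phi_0\circ\psi:NX\to M$, which is a $G$-equivariant symplectomorphism near the zero section, equals the identity on $X$, and has $d\phi|_X=\mathrm{id}$ because $d\psi|_X=\mathrm{id}$ (the primitive $\sigma$ vanishes along $X$, so the Moser flow fixes $X$ to first order).

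The main obstacle I anticipate is not any single step but bookkeeping the equivariance simultaneously with the normalization $d\phi|_X=\mathrm{id}$: I must ensure the averaged metric is both $G$-invariant \emph{and} $\omega$-compatible so that the identification $NX\cong TX^{\perp_\omega}$ is the equivariant one and the derivative along $X$ comes out to the identity rather than merely some equivariant linear isomorphism. The cleanest way to handle this is to average an $\omega$-compatible almost complex structure (the space of such $J$ is contractible and $G$ acts on it, so averaging the induced metrics stays compatible), guaranteeing that every auxiliary choice is simultaneously invariant and compatible, after which the equivariance propagates for free through the exponential map, the homotopy operator, and the Moser flow.
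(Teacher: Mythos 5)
Your proposal is correct and follows essentially the same route as the paper: average an $\omega$-compatible metric to get a $G$-invariant compatible metric, use its equivariant exponential map as the tubular embedding, and then run the equivariant Moser argument (which the paper packages as Theorem \ref{esn}, proved exactly by your homotopy-operator construction) to correct $\phi_0^*\omega$ to the model form $\omega_\alpha$. If anything, your insistence that the connection $\alpha$ be chosen $G$-invariantly is a point the paper glosses over (it says ``any connection''), and it is indeed needed for $\omega_\alpha$ to be $G$-invariant and for the linearized action to be symplectic on $(NX,\omega_\alpha)$.
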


For which we need the following theorem.

\begin{thm}[G-equivariant symplectic neighborhood theorem]\label{esn}
Given a compact Lie group $G$ acting hamiltonially on the symplectic manifolds $Y,X_1,X_2$, and given equivariant symplectic embeddings $\iota_1:Y\rightarrow X_1$, $\iota_2:Y\rightarrow X_2$ plus a $G$-equivariant isomorphism $\phi:N_{X_1}Y\rightarrow N_{X_2}Y$ of symplectic normal bundles, there exists a $G$-equivariant symplectomorphism $\varphi:X_1\rightarrow X_2$ close to $Y$ such that
$$\varphi\circ\iota_1=\iota_2 \text{   and   } d_{\iota_1(Y)}\varphi=\phi.$$
\end{thm}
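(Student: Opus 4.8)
The plan is to follow the two-step structure of the classical (Weinstein) symplectic neighborhood theorem, making every construction $G$-equivariant by exploiting the compactness of $G$ (averaging of metrics) together with the equivariance already supplied by Theorem \ref{equivnbhd}. We will only use that $G$ acts by symplectomorphisms, i.e. that $\omega_1,\omega_2$ are $G$-invariant.

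First I would produce a $G$-equivariant diffeomorphism $\Psi$ between a neighborhood of $\iota_1(Y)$ in $X_1$ and a neighborhood of $\iota_2(Y)$ in $X_2$, extending $\iota_2\circ\iota_1^{-1}$ and with $d_{\iota_1(Y)}\Psi=\phi$. By Theorem \ref{equivnbhd}, applied to $Y\subset X_1$ and to $Y\subset X_2$ with $G$-invariant metrics obtained by averaging an arbitrary metric over $G$, there are equivariant tubular neighborhood embeddings $\theta_1:N_{X_1}Y\rightarrow X_1$ and $\theta_2:N_{X_2}Y\rightarrow X_2$, each equal to the identity on $Y$ with derivative the canonical identification along $Y$. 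Then $\Psi:=\theta_2\circ\phi\circ\theta_1^{-1}$ is $G$-equivariant as a composition of equivariant maps, fixes $Y$, satisfies $\Psi\circ\iota_1=\iota_2$, and has $d_{\iota_1(Y)}\Psi=\phi$.

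Second, I would compare the two $G$-invariant symplectic forms $\omega_1$ and $\Psi^*\omega_2$ on a neighborhood of $\iota_1(Y)$ in $X_1$ by an equivariant Moser argument ($\Psi^*\omega_2$ is $G$-invariant because $\omega_2$ is and $\Psi$ is equivariant). Since $\iota_1,\iota_2$ are symplectic embeddings and $\phi$ is an isomorphism of symplectic normal bundles, the closed form $\tau:=\Psi^*\omega_2-\omega_1$ vanishes at every point of $\iota_1(Y)$. Using the equivariant fibre-scaling homotopy $\phi_t(q,v)=(q,tv)$ on $N_{X_1}Y$, which commutes with the linearized $G$-action, the relative Poincaré lemma produces a primitive $\sigma$ with $d\sigma=\tau$; because the homotopy is equivariant and $\tau$ is invariant, $\sigma$ is automatically $G$-invariant, and because the radial field and $\tau$ both vanish along $Y$, $\sigma$ vanishes to second order there. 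The interpolation $\omega_t=\omega_1+t\tau$ is symplectic near $Y$ (nondegeneracy is an open condition and $\tau$ vanishes along $Y$), so the Moser equation $\iota_{X_t}\omega_t=-\sigma$ has a unique solution $X_t$, which is $G$-invariant and vanishes to first order along $Y$. Integrating $X_t$ gives a $G$-equivariant flow $\psi_t$ fixing $Y$ with $d_Y\psi_1=\mathrm{id}$ and $\psi_1^*(\Psi^*\omega_2)=\omega_1$. Setting $\varphi:=\Psi\circ\psi_1$ then yields the desired $G$-equivariant symplectomorphism, with $\varphi\circ\iota_1=\iota_2$ and $d_{\iota_1(Y)}\varphi=\phi$.

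The main obstacle is Step 2: arranging the primitive $\sigma$ to be simultaneously $G$-invariant and to vanish to second order along $Y$, so that the Moser vector field fixes $Y$ and has trivial normal linearization there (guaranteeing $d_Y\psi_1=\mathrm{id}$ and hence $d_{\iota_1(Y)}\varphi=\phi$). Using the equivariant radial homotopy is what lets us avoid a separate averaging step that might interfere with the vanishing order; averaging a non-invariant primitive would also work, since $G$ fixes $Y$ and therefore preserves the order of vanishing, but the equivariant homotopy operator makes invariance automatic and is cleaner.
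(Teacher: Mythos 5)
Your proposal is correct and takes essentially the same approach as the paper: an equivariant diffeomorphism $\theta_2\circ\phi\circ\theta_1^{-1}$ built from exponential maps of $G$-averaged metrics (the paper's $\tilde\varphi=\exp_2\circ\phi\circ\exp_1^{-1}$), followed by an equivariant Moser argument (the paper's Lemma \ref{lemma1_eq}) applied to a primitive obtained from the radial homotopy exactly as in Lemma \ref{lemma1}, which is automatically $G$-invariant. Your explicit tracking of the vanishing order of $\sigma$ along $Y$ (giving $d_Y\psi_1=\mathrm{id}$ and hence $d_{\iota_1(Y)}\varphi=\phi$) is if anything more careful than the paper's write-up; the one point both treatments leave implicit is that the exponential maps must be applied to the symplectic orthogonal complement $TY^{\perp_\omega}$ (rather than the metric one) so that $\tau=\Psi^*\omega_2-\omega_1$ indeed vanishes along $Y$, which is harmless since that subbundle is $G$-invariant when $G$ acts symplectically.
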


The proof of Theorem \ref{esn} is mostly based on Moser's argument:

\begin{lemma}[Equivariant Moser's argument]\label{lemma1_eq}
Let $\omega_0,\omega_1$ be symplectic forms on $X$, connected by a path of symplectic forms of fixed deRham class. Let there be an action $G\circlearrowright X$, hamiltonian with respect to both symplectic forms (and all along the path).
Assume that there exists a smooth family $\{\sigma_t\}_{t\in [0,1]}$ of 1-forms such that $\dd{t}\omega_t=\sigma_t$, and each $\sigma_t$ is $G$-equivariant. Then there exists a $G$-equivariant $\psi:X\rightarrow X$ such that $\psi^*(\omega_1)=\omega_0$.
\end{lemma}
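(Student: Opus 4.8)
The plan is to run Moser's trick essentially verbatim, carrying the $G$-action along as extra bookkeeping and only checking that nothing in the construction breaks equivariance. Since the de Rham class $[\omega_t]$ is fixed along the path, $\frac{d}{dt}\omega_t$ is exact, and by hypothesis it equals $d\sigma_t$ for a smooth family of $G$-invariant $1$-forms $\sigma_t$. Because each $\omega_t$ is nondegenerate, I would define the time-dependent Moser vector field $v_t$ as the unique solution of $\iota_{v_t}\omega_t=-\sigma_t$, and let $\psi_t$ be its flow normalized by $\psi_0=\mathrm{id}$.

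The core computation is the standard one. Using $d\omega_t=0$ together with Cartan's formula,
\[
\frac{d}{dt}\bigl(\psi_t^*\omega_t\bigr)=\psi_t^*\Bigl(\mathcal L_{v_t}\omega_t+\frac{d}{dt}\omega_t\Bigr)=\psi_t^*\bigl(d\,\iota_{v_t}\omega_t+d\sigma_t\bigr)=\psi_t^*\bigl(-d\sigma_t+d\sigma_t\bigr)=0,
\]
so $\psi_t^*\omega_t$ is constant in $t$, and therefore $\psi:=\psi_1$ satisfies $\psi^*\omega_1=\psi_0^*\omega_0=\omega_0$.

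The genuinely new ingredient is equivariance, and it is the only place the invariance hypotheses enter. For $g\in G$ write $g\colon X\to X$ for the action; by assumption $g^*\omega_t=\omega_t$ (the action is symplectic for every form in the path) and $g^*\sigma_t=\sigma_t$. Applying $g^*$ to the defining relation and using the naturality identity $g^*(\iota_{v_t}\omega_t)=\iota_{g^*v_t}(g^*\omega_t)$ gives $\iota_{g^*v_t}\omega_t=-\sigma_t=\iota_{v_t}\omega_t$, and nondegeneracy of $\omega_t$ forces $g^*v_t=v_t$. Thus the Moser field is $G$-invariant for every $t$; a $G$-invariant (time-dependent) vector field has a flow that commutes with the $G$-action, so $g\circ\psi_t=\psi_t\circ g$ for all $t$, and in particular $\psi=\psi_1$ is $G$-equivariant, as required. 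I would emphasize that only invariance of $\omega_t$ and $\sigma_t$ is used here, so the stated Hamiltonian hypothesis is in fact stronger than what this lemma needs.

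The one technical point, which I expect to be the real obstacle, is the existence of the flow $\psi_t$ for all $t\in[0,1]$: in general $v_t$ need not be complete. In the intended applications (the neighborhood theorems), the two endpoint forms agree along a submanifold, so $\sigma_t$ and hence $v_t$ vanish there, and the flow then exists on a neighborhood of that locus; moreover one may take this neighborhood $G$-invariant by averaging, and shrinking it is harmless because all conclusions are local. I would therefore state the lemma (and use it) only for flows defined on such a $G$-invariant neighborhood, which is exactly the regime in which Theorem~\ref{esn} is applied.
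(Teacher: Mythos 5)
Your proposal is correct and matches the paper's proof, which likewise runs the standard Moser argument and observes that $G$-invariance of $\omega_t$ and $\sigma_t$ forces the Moser vector field, hence its flow and $\psi$, to be $G$-equivariant. Your added remarks (that only invariance, not the Hamiltonian hypothesis, is used, and that the flow's domain must be handled on a $G$-invariant neighborhood) are sensible refinements of the same argument, not a different route.
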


\begin{proof}
Follow the standard proof of Moser's argument (for the non-equivariant case) and notice that since all $\omega_t$,$\sigma_t$ are $G$-equivariant then so is the vector field $X_t$ generating $\omega_t$, and similarly so is the flow of $X_t$ and so is $\psi$.
\end{proof}

\begin{proof}[Proof of Theorem \ref{esn}]
Recall that if $G\circlearrowright X_1$ and $Y$ is a $G$-invariant embedding, then there is an induced action 
$$G\circlearrowright NY \text{   s.t.   } g(y,v)=(g\cdot y, D_y g (v)).$$
Find a $G$-invariant metric on $X_1$, by averaging just like in the proof of Theorem \ref{equivnbhd}.
 The corresponding exponential map $exp_1:NY\rightarrow X_1$ is then $G$-equivariant.

Now assume that $\phi:N_{X_1}Y\rightarrow N_{X_2}Y$ is equivariant, i.e. it satisfies 
$$g(y,\phi_y(v))=(y,D_y g_2(\phi_y(v)))=(y,D_yg_1(v)) \forall g\in G, (y,v)\in N_{X_1}Y $$
where $g_1, g_2$ indicate the action of $g$ on $X_1,X_2$.

Consider $\tilde\varphi=exp_2\circ\phi\circ exp_1^{-1}:X_1\rightarrow X_2$ which is an equivariant diffeomorphism and a symplectomorphism because all three maps are. Then $\omega_1$ and $\tilde\varphi^* (\omega_2)$ are two symplectic forms agreeing along $Y$. We can thus construct a 1-form $\sigma$ such that $d\sigma=\omega_1-\tilde\varphi^* (\omega_2)$ as in \ref{lemma1}. Observe that such $\sigma$ is $G$-equivariant, therefore we can apply Lemma \ref{lemma1_eq} to find $\psi:X_1\rightarrow X_1$ such that $\psi^*\tilde\varphi^* (\omega_2)=\omega_1$ so $\varphi=\tilde\varphi\circ\psi$ is the map we need.
\end{proof}

Now we can prove Theorem \ref{tubularnbhd}.

\begin{proof}[Proof of Theorem \ref{tubularnbhd}]
First of all we need a $G$-invariant metric $\tilde\lambda$ which is compatible with $\omega$ at all points of $X$. As before (see Theorem \ref{equivnbhd}), we can construct $\tilde\lambda$ by first considering any metric $\lambda$ compatible with $\omega$, then averaging: $\tilde\lambda(v,w)=\int_G \lambda(g\cdot v,g\cdot w) dg$. Because $G$ acts symplectically, $\tilde\lambda$ is still compatible with $\omega$. Then the exponential map $exp: NX\rightarrow M$ is equivariant and $exp^*(\omega)=\omega$ along $X$. Consider any connection $\alpha$ on $NX$ and the induced symplectic form $\omega_\alpha$. Since $exp^*(\omega)=\omega_\alpha$ along $X$, we are in the setup of Theorem \ref{esn} and we can find a $G$-equivariant symplectomorphism $\phi: NX\rightarrow NX$ such that $\phi^*(exp^*(\omega))=\omega_\alpha$. Then $exp\circ\phi: NX\rightarrow M$ is as desired.
\end{proof}

\subsection{Equivariant plumbing}
In the presence of a $G$ action on $M$ preserving the $X_i$, we can construct the plumbing in such a way that it inherits a $G$ action.

\begin{prop}[Equivariant plumbing]\label{equivplum}
Let $G$ be a compact Lie group acting on $M$. Let $\{X_i\}_{i\in\mathcal I}$ be a finite collection of transverse, $G$-invariant submanifolds. Then there exist:
\begin{itemize}
\item a system of equivariant tubular neighborhoods $\psi_I^{I'}:N_{X_{I'}}X_I\rightarrow X_{I'}$ for $I\subset I'$;
\item a system of equivariant bundle isomorphisms $\Phi_I^{I'}:N_{X_{I'}}X_I\rightarrow X_{I'}$ for $I\subset I'$;
\item a natural $G$-action on the corresponding plumbing;
\item a $G$-equivariant embedding $\phi:N(\bigcup_i X_i)\rightarrow M$.
\end{itemize}
\end{prop}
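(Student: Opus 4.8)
The plan is to rerun the smooth plumbing construction of Propositions~\ref{smoothplumbingsexist} and~\ref{smoothembedd} verbatim, but making every auxiliary choice $G$-invariant. The single enabling fact is that $G$ is compact, so any metric, connection, or partition of unity can be averaged over the Haar measure to a $G$-invariant (resp.\ $G$-equivariant) one, exactly as in the proof of Theorem~\ref{equivnbhd}; and, crucially, the exponential map of a $G$-invariant metric is $G$-equivariant, because $G$ then acts by isometries and isometries send geodesics to geodesics. Since $G$ preserves each $X_i$, the linearized action $g\cdot(x,v)=(g\cdot x, D_xg(v))$ already gives a canonical $G$-action on each $NX_i$ and on each $N_{X_{I'}}X_I$; the whole point is to arrange the gluing so that these actions are compatible.

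First I would fix a $G$-invariant metric on $M$ and, inductively on the depth of the intersections, $G$-invariant metrics on each $X_I$, obtained by averaging. Defining $\psi_I^{I'}$ through the corresponding (equivariant) exponential maps makes the system of tubular neighborhoods $G$-equivariant by Theorem~\ref{equivnbhd}. For the bundle isomorphisms $\Phi_I^{I'}$, I would use $G$-invariant connections (again averaged) on the normal bundles: the identifications by parallel transport covering the retractions $\rho_I^{I'}$ are then $G$-equivariant, so the induced $\Phi_I^{I'}$ are equivariant bundle maps. Because the $\Phi_I^{i}$ are equivariant, the linearized actions on the various $NX_i$ respect the equivalence relation $\sim$, and hence descend to a single $G$-action on the plumbing $N(\bigcup_i X_i)$.

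For the equivariant embedding I would reprove Lemma~\ref{smoothembedlemma} and Proposition~\ref{smoothembedd} with all the metrics $\mu_t$ chosen $G$-invariant and the bundle map $\ell$ taken $G$-equivariant; note that the orthogonal splitting determined by a $G$-invariant metric is automatically $G$-equivariant, so this costs nothing. The maps $\exp_{\mu_t}^{\ell}$ are then equivariant, and since composing equivariant maps and interpolating with a $G$-invariant bump function preserves equivariance, the endpoint $\phi=\nu_{1,0}$ of the double induction is a $G$-equivariant embedding with $\phi|_{X_i}=\mathrm{id}$.

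The main obstacle is bookkeeping rather than a new idea: one must carry the $G$-invariance through the same double induction (on intersection depth and on the enumeration of the $X_i$) that drives the smooth case, checking at every extension-by-partition-of-unity step that the output is again $G$-invariant, which is arranged by averaging. The one genuine point to verify is that averaging preserves the special structure of the metrics used in Lemmas~\ref{bundlemetric}, \ref{extendplumbmetric} and \ref{plumbmetric}: since $G$ maps each fibre of $NX_i$ linearly to a fibre and permutes the $X_I$ among themselves, averaging a bundle metric over $G$ yields a bundle metric and preserves both total geodesy of the $X_i$ and the fibrewise-linearity near intersections. Hence the hypotheses of the metric lemmas survive, the $G$-invariant plumbing metric $\lambda$ exists, and the construction goes through equivariantly.
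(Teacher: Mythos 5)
Your proposal follows essentially the same route as the paper's own proof: the paper likewise obtains the $G$-action on the plumbing by choosing the tubular neighborhoods and bundle isomorphisms to be equivariant (its Lemma \ref{equibundles}), and then reruns the smooth constructions of Propositions \ref{smoothplumbingsexist} and \ref{smoothembedd} with Haar-averaged metrics and an equivariant bundle map $\ell$, exactly as you do. Your justification of the averaging step is, if anything, more precise than the paper's parenthetical remark: it is the fibrewise linearity of the linearized action (not merely that $G$ preserves the strata) that guarantees the bundle-metric properties --- total geodesy of the zero sections and $exp_\lambda=id$ on the fibres --- survive averaging.
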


Let's first prove a lemma.

\begin{lemma}\label{equibundles}
Assume the data of equivariant tubular neighborhoods $\psi_I^{I'}:N_{X_{I'}}X_I\rightarrow X_{I'}$ for $I'\subset I$ (as in Definition \ref{plumdata}). Then there exist equivariant bundle isomorphisms $\Phi_I^{I'}:N_{X_{I'}}X_I\rightarrow X_{I'}$ for $I\subset I'$ such that the corresponding plumbing naturally inherits a $G$-action.
\end{lemma}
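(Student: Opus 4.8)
The plan is to reduce everything to constructing the isomorphisms $\widetilde\Phi_I^{I'}:(\rho_I^{I'})^*(NX_{I'}|_{X_I})\rightarrow NX_{I'}|_{\mathcal U_I^{I'}}$ of Definition \ref{plumdata} in a $G$-equivariant and mutually compatible way. Once this is done, $\Phi_I^{I'}=\widetilde\Phi_I^{I'}\circ(\psi_I^{I'},\mathrm{id})$ is a composition of equivariant maps, since the $\psi_I^{I'}$ are equivariant by hypothesis, hence equivariant. The natural linearized action $g\cdot(x,v)=(g\cdot x, D_x g(v))$ on $\bigcup_i NX_i$ then descends to the plumbing: the equivalence relation $\sim$ is generated by $\Phi_I^i(x)\sim\Phi_I^j(x)$, so once each $\Phi_I^{I'}$ is equivariant we get $g\cdot\Phi_I^i(x)=\Phi_I^i(g\cdot x)\sim\Phi_I^j(g\cdot x)=g\cdot\Phi_I^j(x)$, i.e. $\sim$ is $G$-invariant and $G$ acts on the quotient. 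Thus the whole content is the equivariant, compatible construction of the bundle isomorphisms.

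To build the $\widetilde\Phi_I^{I'}$ equivariantly I would use parallel transport with respect to $G$-invariant connections. For each $I'$, average an arbitrary connection on $NX_{I'}$ over $G$ against the Haar measure, exactly as in the proof of Theorem \ref{equivnbhd}, to obtain a $G$-invariant connection $\nabla^{I'}$. Each point $p\in\mathcal U_I^{I'}$ lies on a unique radial fiber $t\mapsto\psi_I^{I'}(tv)$ of the tubular neighborhood, and parallel transport of $\nabla^{I'}$ along this path gives a fiberwise isomorphism $NX_{I'}|_{\rho_I^{I'}(p)}\to NX_{I'}|_p$, which is precisely the required $\widetilde\Phi_I^{I'}$ and restricts to the identity along $X_I$. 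Because $\nabla^{I'}$ is $G$-invariant and the radial fibers are permuted by $G$ (the $\psi_I^{I'}$ being equivariant), parallel transport commutes with the action, so $\widetilde\Phi_I^{I'}$ is automatically equivariant. This is the cleanest route to equivariance, since averaging a bundle isomorphism directly need not produce an isomorphism, whereas parallel transport of an invariant connection always does.

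The remaining and genuinely delicate point is the cocycle conditions $\Phi_I^{I''}=\Phi_{I'}^{I''}\circ\Phi_I^{I'}$ together with the matching condition on the $\psi$'s; these involve the three distinct bundles $NX_I,NX_{I'},NX_{I''}$ and the interplay of the base-direction maps $(\psi_I^{I'},\mathrm{id})$ with the fiber-direction parallel transports. I would enforce them by reverse induction on $\#I'$, mirroring the non-equivariant constructions of Proposition \ref{smoothplumbingsexist} and Lemma \ref{nconnections}: starting from the deepest strata and, when passing to a shallower $I'$, choosing the invariant connection $\nabla^{I'}$ so that its restrictions and pullbacks agree with the isomorphisms already fixed on deeper intersections, extending over the rest of $X_{I'}$ by an equivariant partition of unity (available by averaging for compact $G$). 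The cocycle identities then follow because composed parallel transports along the nested radial fibers coincide with the direct transport, using the retraction relation $\rho_I^{I'}\circ\rho_{I'}^{I''}=\rho_I^{I''}$. The main obstacle is exactly this simultaneous bookkeeping: equivariance comes for free from the invariant-connection/parallel-transport device, but guaranteeing that the connections selected at the various levels restrict consistently, so that the bundle isomorphisms satisfy the plumbing compatibility while staying invariant at every inductive step, is the part that demands care.
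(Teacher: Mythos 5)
Your proof is correct and follows essentially the same route as the paper: choose the bundle isomorphisms to be equivariant ($G$-bundle maps covering the equivariant $\psi_I^{I'}$), then observe that equivariance of the $\Phi_I^{I'}$ makes the gluing relation $\sim$ $G$-invariant, so the linearized actions on the $NX_i$ descend to the plumbing. Where the paper simply asserts that the $\Phi_I^{I'}$ ``can be asked'' to be $G$-bundle isomorphisms, you supply an explicit mechanism (Haar-averaged invariant connections and radial parallel transport) and handle the cocycle compatibility by reverse induction --- a filling-in of detail consistent with the paper's own non-equivariant machinery, not a different argument.
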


\begin{proof}
Consider the $G$-action on $NX_{ij}$ obtained by linearizing the one on $M$: $g\cdot (x,v,w)=(gx,D_xg(v),D_xg(w))$. Then when we construct bundle isomorphisms $\Phi_I^{I'}:N_{X_{I'}}X_I\rightarrow X_{I'}$ for $I\subset I'$ we can ask that they are isomorphisms of $G$-bundles. 

Consider the plumbing of $\bigcup_i X_i$ given by such $G_i$-invariant plumbing data. We can check that the $G$-actions on $NX_i$ for all $i$ now descend to a $G$-action on the plumbing. The bundle map $\Phi_{ij}^i:NX_{ij}\rightarrow NX_i$ is fiberwise equivariant. In fact, it is an equivariant map of $G$-spaces, since: $\Phi_{ij}^i(z,v_j,v_i)=(\psi_{ij}^i(z,v_j),\tilde v_i)$ 
and we are assuming that $\psi_{ij}^i$ is equivariant.
The fact that $\Phi_{ij}^i$ is equivariant means that the $G$ action on $NX_i$ and $NX_j$ agree on the plumbing. This is true for all $i,j$, therefore the plumbing itself inherits a $G$ action.
\end{proof}

\begin{proof}[Proof of Proposition \ref{equivplum}]
All we need to do is go through the proof of Theorem \ref{smoothplumbingsexist} and check that the arguments can be used in the $G$-equivariant case. Given equivariant data, we can build the equivariant plumbing as explained in the previous lemma. 

Now let's go through the steps of Proposition \ref{smoothembedd} to see what happens. Recall that we need a choice of metric on the plumbing. In the equivariant setting, such metric has to be invariant. This isn't a problem since one can check that averaging $\lambda$ along $G$ doesn't change the required properties (in particular, if all $X_i$'s are geodesic and orthogonal with respect to $\lambda$, they still are with respect to $\tilde\lambda$ because $G$ preserves all the strata).

In the rest of the proof of Proposition \ref{smoothembedd}, we can ask that the bundle map $g:NX_{k+1}\hookrightarrow TM$ be $G$-equivariant. This is enough to ensure everything is equivariant, therefore the map $F:N(\bigcup_iX_i)\rightarrow M$ is also equivariant.
\end{proof}

An analog of Theorem \ref{plumbingsexist} is the following.

\begin{prop}\label{rigidequivplum}
Let $\{X_i\}_{i\in\mathcal{I}}$ be a finite set of transverse submanifolds of $M$. Let $G$ act on $M$ and assume each $X_i$ to be invariant under $G$.
Given an equivariant splitting $g_i:NX_i\rightarrow TM$ as in Definition \ref{compatiblesplitting}, assume that $g_i|_{N_{X_j}X_i\subset NX_i}:N_{X_j}X_i\rightarrow TX_j\subset TM$ for all $i,j$. One can find:

\begin{itemize}
\item plumbing data to build an equivariant plumbing $N(\bigcup_i X_i)$ compatible with the splitting;
\item an equivariant embedding $F: N(\bigcup_i X_i)\rightarrow M$ such that $d_{X_i}F=(g_i,id):T(NX_i)\rightarrow TM$.
\end{itemize} 
\end{prop}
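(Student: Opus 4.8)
The plan is to reprove Theorem \ref{plumbingsexist} in the equivariant category by following its induction essentially verbatim and checking, at every step, that the constructions respect the $G$-action. The one global device that makes this possible is the averaging trick already used in Theorem \ref{equivnbhd} and Proposition \ref{equivplum}: wherever the non-equivariant argument selects an auxiliary metric $\lambda$ on $M$ (or the plumbing metric of Lemma \ref{plumbmetric}), I would replace it by its $G$-average $\tilde\lambda(v,w)=\int_G \lambda(g\cdot v, g\cdot w)\,dg$. Because $G$ preserves each $X_i$ and hence every stratum $X_I$, the averaged metric $\tilde\lambda$ retains exactly the properties the argument uses — the $X_i$ stay totally geodesic and the normal fibers stay orthogonal — and its exponential map becomes $G$-equivariant. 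The splitting $\{g_i\}$ is equivariant by hypothesis, so it requires no averaging.

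The heart of the matter is an equivariant upgrade of the rigid embedding Proposition \ref{rigidembedding}, since that proposition is the engine driving the induction. I would start from the equivariant smooth embedding $f$ produced by Proposition \ref{equivplum}, and then rerun the straightening construction $\eta=\eta_1\circ\cdots\circ\eta_n$ with the invariant plumbing metric $\tilde\lambda$ in place of $\lambda$. In Lemma \ref{straightening} the straightening map is $\tilde\eta=\exp_{\tilde\lambda}^{\ell}$ with $\ell=(g_i\times id)^{-1}\circ d\eta(NX_i)$; since $g_i$ and the initial embedding are equivariant, $\ell$ is an equivariant subbundle, and the exponential map of the invariant metric $\tilde\lambda$ is equivariant, so $\tilde\eta$ is equivariant. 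In Lemma \ref{extendstraightening} the interpolation $\tilde{\tilde\eta}=(1-\beta)\tilde\eta+\beta\cdot id$ is built from a bump function $\beta(|v_i|)$ of the $\tilde\lambda$-norm, which is now $G$-invariant, so the fiberwise convex combination of the two equivariant maps $\tilde\eta$ and $id$ with $G$-invariant weights is again equivariant. The assertions of Lemma \ref{straighteningonplumbing} are statements about derivatives being the identity and are automatically $G$-natural. Consequently $F=f\circ\eta$ is $G$-equivariant and still satisfies $d_{X_i}F=(g_i,id)$, giving the equivariant form of Proposition \ref{rigidembedding}.

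With this engine in hand, the reverse induction of Theorem \ref{plumbingsexist} on $\#I$ goes through. At the base level the equivariant smooth plumbing of Proposition \ref{equivplum} supplies an invariant metric and equivariant maps $\psi_{\mathcal I}^{\mathcal I\setminus\{i\}}=\exp_{\tilde\lambda}$; at the inductive step one must produce bundle isomorphisms $\Phi$ and tubular neighborhoods $\psi$ that are simultaneously compatible with the splitting (Definition \ref{compatiblesplitting}) and $G$-equivariant, and then invoke the equivariant rigid embedding above to realize the $\psi_I^J$ equivariantly. One also checks that the connection extensions and the partition-of-unity extensions appearing along the way can be taken $G$-invariant, again by averaging.

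The step I expect to be the genuine obstacle is precisely this simultaneous satisfaction of two constraints in the inductive step, because — unlike a metric — a bundle isomorphism subject to an \emph{equality} constraint cannot simply be averaged and still solve the constraint. In the non-equivariant proof one fixes an arbitrary $\Phi_{J\cup\{i,j\}}^{J\cup\{i\}}$ and then solves the compatibility condition for $\Phi_{J\cup\{i,j\}}^{J\cup\{j\}}$. To carry this out equivariantly I would verify that the compatibility condition is $G$-natural in its unknown: every datum entering it ($\nu_J$, $g_i$, $g_j$, and the fixed $\Phi_{J\cup\{i,j\}}^{J\cup\{i\}}$) is equivariant, and the linear operator sending one bundle map to the other intertwines the linearized $G$-actions. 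Hence if the fixed isomorphism is chosen equivariant, its unique solution is automatically equivariant, with no averaging required. Establishing this intertwining carefully is where the real content of the equivariant version lies.
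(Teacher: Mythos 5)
Your proposal is correct and follows essentially the same route as the paper's proof: check that each step of Theorem \ref{plumbingsexist} goes through equivariantly, impose equivariance on the bundle isomorphisms $\Phi_I^{I'}$ (which the equivariance of the $g_i$ permits), and observe that the straightening maps of Lemmas \ref{straightening} and \ref{extendstraightening} are equivariant so that $F$ is a composition of equivariant maps. Your write-up is in fact more detailed than the paper's --- in particular your discussion of why the solved-for isomorphism $\Phi_{J\cup\{i,j\}}^{J\cup\{j\}}$ is automatically equivariant, and of the averaged metric and invariant bump function --- but the underlying argument is the same.
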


\begin{proof}

We just need to check that each step of the proof of Theorem \ref{plumbingsexist} can be done equivariantly. Notice that, when constructing a plumbing compatible with the $g_i$'s, since each $g_i$ is equivariant we can easily impose that $\Phi_I^{I'}$ also is.

At this point, we need to show that the function $F$ produced by Proposition \ref{rigidembedding} is equivariant whenever all the input data (embedding and splitting and choice of smooth embedding) is. It's easy to see that both the map $\tilde\eta$ from Lemma \ref{straightening} and $\tilde {\tilde\eta}$ from Lemma \ref{extendstraightening} are equivariant in this setup.
Therefore $F$ is a composition of equivariant maps, hence it is equivariant.
\end{proof}

\subsection{Equivariant symplectic neighborhood of multiple submanifolds}
We finally get to the equivariant version of Theorem \ref{main}.

First we need the equivariant version of Lemma \ref{lemma1}.

\begin{lemma}\label{equivlemma1}
Let $\omega_1,\omega_2$ be symplectic forms on $M$ that agree along a collection of transversely intersecting symplectic submanifolds $X_i$. Let there be an action $G\circlearrowright M$ preserving each $X_i$. Then there exist open neighborhoods $\mathcal{U},\mathcal{V}$ of $\bigcup_i X_i$ and a $G$-equivariant symplectomorphism $\phi:\mathcal{U}\rightarrow\mathcal{V}$ such that
$$\phi|_{\bigcup_i X_i}=id \text{      and      } \phi^*(\omega_2)=\omega_1$$
\end{lemma}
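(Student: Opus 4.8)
The plan is to revisit the inductive proof of Lemma \ref{lemma1} and verify that every choice made there can be made $G$-equivariantly, so that the resulting diffeomorphism $\phi$ is itself equivariant. The structure of the argument will be identical: for each $0\leq k\leq n$ I construct a $2$-form $\omega^{(k)}$ agreeing with $\omega_1=\omega_2$ along all $X_i$ and equal to $\omega_1$ near $\bigcup_{i=1}^k X_i$, together with maps $f_k$ satisfying $f_k^*\omega^{(k-1)}=\omega^{(k)}$, $f_k=\mathrm{id}$ on every $X_i$ and on a neighborhood of $\bigcup_{i=1}^{k-1}X_i$. The composition $\phi=f_1\circ\cdots\circ f_n$ then satisfies $\phi^*\omega_2=\omega_1$. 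The one new requirement is that each $f_k$ be $G$-equivariant; since $G$ is a group of diffeomorphisms, a composition of equivariant maps is equivariant, so it suffices to make each step equivariant.

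The key input is an invariant choice of metric. In the proof of Lemma \ref{lemma1} the retraction $\phi_t$ and the primitive $\tilde\sigma_k$ are built from the exponential map of a metric $g$ for which $\exp_k$ preserves the strata $X_j$. First I would take such a metric $g$, as supplied by Corollary \ref{totallygeodesicmetric}, and average it over $G$ using a left-invariant (Haar) measure, $\tilde g(v,w)=\int_G g(h\cdot v,h\cdot w)\,dh$, exactly as in the proof of Theorem \ref{equivnbhd}. Because $G$ preserves each $X_i$ and hence permutes nothing among the strata, the averaged metric $\tilde g$ still makes each $X_i$ totally geodesic and still restricts correctly to $\exp_k|_{N_{X_j}(X_k\cap X_j)}$, so it remains admissible for the construction; moreover $\tilde g$ is now $G$-invariant, hence $\exp_k$ is $G$-equivariant and so is the scaling homotopy $\phi_t$.

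Equivariance of the primitive then follows formally. Since $\omega_1$ and $\omega^{(k-1)}$ are $G$-invariant (the former by hypothesis on the action, the latter inductively, being built from invariant data) and $\phi_t$ is equivariant, the one-forms $\sigma_t$ defined by $\sigma_t(m;v)=(\omega^{(k-1)}-\omega_1)(\phi_t(m);\tfrac{d}{dt}\phi_t(m),d\phi_t(m)v)$ are $G$-invariant, and so is $\tilde\sigma_k=\int_0^1\sigma_t$. The cutoff producing $\sigma_k$ uses a bump function of the $\tilde g$-norm $|v_k|$, which is $G$-invariant since $\tilde g$ is; hence $\sigma_k$ is invariant. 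Feeding this invariant $\sigma_k$ into the equivariant Moser argument of Lemma \ref{lemma1_eq} yields a $G$-equivariant $f_k$ with $f_k^*\omega^{(k-1)}=\omega^{(k)}$ and $f_k=\mathrm{id}$ wherever $\sigma_k=0$, as required. I expect the only real point needing care is confirming that the strata-preserving property of Corollary \ref{totallygeodesicmetric} survives averaging; this is where $G$-invariance of each $X_i$ is essential, since it guarantees that translating a stratum-preserving geodesic by $h\in G$ again lands in the same stratum, so the averaged metric inherits the geodesic-preservation condition rather than merely an averaged weakening of it.
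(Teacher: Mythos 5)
Your overall strategy coincides with the paper's: rerun the induction of Lemma \ref{lemma1}, replace the metric by a $G$-invariant one obtained by Haar averaging, and note that equivariance of $exp_k$, $\phi_t$, $\sigma_t$, $\sigma_k$, and hence of each Moser map $f_k$ and of $\phi=f_1\circ\cdots\circ f_n$, follows formally. However, the one step you yourself single out as ``the only real point needing care'' is exactly where your argument has a genuine gap, and the justification you offer for it is fallacious. You argue that because each $h\in G$ preserves the strata, each pullback $h^{*}g$ is stratum-preserving, and that therefore the average $\tilde g=\int_G h^{*}g\,dh$ ``inherits the geodesic-preservation condition.'' The first half is correct, but the inference is not: geodesics of an average of metrics are not averages (or translates) of geodesics of the summands, so total geodesy of each $h^{*}g$ does not pass to $\tilde g$. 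This is precisely the pitfall the paper itself flags in the remark and example following Proposition \ref{smoothembedd} --- interpolation of metrics does not preserve geodesics, and Haar averaging is an interpolation with constant weights. The failure is real, not merely unproved: take $M=\R^3$, $X=\{z=0\}$, $G=\Z/2$ acting by $\tau(x,y,z)=(y,x,z)$, and $g=E(y)\,dx^2+dy^2+dz^2+2cE(y)\,dx\,dz$ with $E>0$ nonconstant and $c$ a small constant. One checks (e.g.\ via the Euler--Lagrange equations) that $X$ is totally geodesic for $g$, hence also for $\tau^{*}g$; but for the average $\tfrac12(g+\tau^{*}g)$, whose coefficients are $z$-independent, the momentum $2\dot z+cE(y)\dot x+cE(x)\dot y$ is conserved along geodesics, so a geodesic remaining in $X$ would force $E(y)\dot x+E(x)\dot y$ to be conserved along geodesics of the induced metric $(1+E(y))dx^2+(1+E(x))dy^2$ --- which fails at points $x=y=p$ with $E'(p)\neq 0$. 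So an invariant submanifold totally geodesic for every $h^{*}g$ need not be totally geodesic for $\tilde g$, and your averaged metric may no longer satisfy the hypothesis needed to make $\sigma_t$ vanish along all the $X_j$.

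The repair is to build invariance into the construction of the stratum-preserving metric rather than averaging the finished metric. Equivariant tubular neighborhoods of a single invariant submanifold do come from averaging (Theorem \ref{equivnbhd}; no total geodesy is needed there), so the plumbing data of Proposition \ref{smoothplumbingsexist} can be chosen $G$-equivariantly (Proposition \ref{equivplum}); then in the construction of the plumbing metric (Lemmas \ref{bundlemetric}, \ref{extendplumbmetric}, \ref{plumbmetric}) one chooses each ingredient invariant: the base metrics, the fibrewise linear metrics, and the metric connections can each be averaged within their own convex or affine spaces, where averaging manifestly preserves the defining conditions because the linearized action is fibrewise linear. The resulting metric is $G$-invariant and stratum-preserving by construction, and with it the rest of your argument (equivariance of $\phi_t$, $\sigma_t$, $\sigma_k$, then Lemma \ref{lemma1_eq}) goes through verbatim. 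To be fair, the paper's own write-up of Lemma \ref{equivlemma1} is terse on this very point --- it simply invokes averaging as in Theorem \ref{esn} --- but it has Proposition \ref{equivplum} in place beforehand precisely so that such an invariant, stratum-preserving metric is available; your proposal, by contrast, commits to an argument for the averaging step that is demonstrably invalid.
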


\begin{proof}
Follow the proof of the non-equivariant case (Lemma \ref{lemma1}), but observe that at each step of the induction the construction of $\sigma_k$ is based on a choice of exponential map. As in the proof of Theorem \ref{esn}, we construct (by averaging) a $G$-equivariant metric $g$. Therefore the corresponding map $exp_k$ is $g$-equivariant, therefore so is the map $\phi_t$ for any $t$, therefore so is $\sigma_t$ for any $t$, therefore so is $\sigma_k$. This means all the $\omega^{(k)}$ are $G$-equivariant (meaning that $G$ acts symplectically with respect to each $\omega^{(k)}$), and all functions $f_k$ are also $G$-equivariant. So in the end, $\phi$ is equivariant.
\end{proof}

Finally we are able to construct an equivariant symplectic plumbing, and equivariantly embed it into $(M,\omega)$.

\begin{thm}[Equivariant symplectic plumbing]\label{equivsymplum}
Let $G$ be a compact Lie group acting on $(M,\omega)$ via symplectomorphisms. Let $\{X_i\}_{i\in\mathcal I}$ be a finite collection of orthogonal, $G$-invariant submanifolds. Then there exist:
\begin{itemize}
\item equivariant symplectic plumbing data $\{\psi_I^{I'}, \Phi_I^{I'}\}_{I'\subset I\subset \mathcal I}$;
\item a natural symplectic $G$-action on the plumbing;
\item a $G$-equivariant symplectic embedding $\phi:N(\bigcup_i X_i)\rightarrow M$.
\end{itemize}
\end{thm}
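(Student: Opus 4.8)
The plan is to reproduce the inductive argument of Theorem \ref{main}, replacing every non-equivariant construction with the $G$-equivariant analogue already assembled in this section. The organizing principle is that each auxiliary choice entering that proof lives in an affine or convex space carrying a $G$-action, so it can be rendered $G$-invariant by averaging against Haar measure, exactly as in the proofs of Theorems \ref{equivnbhd} and \ref{tubularnbhd}. Concretely, I would first fix a $G$-invariant Riemannian metric compatible with $\omega$ (average any compatible metric over $G$; since $G$ acts symplectically the average stays $\omega$-compatible). This produces $G$-equivariant orthogonal splitting maps $g_i\colon NX_i\to TM|_{X_i}$, because the $\omega$-orthogonal complement of $TX_i$ is canonical and $G$ preserves both $\omega$ and each $X_i$.

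With the equivariant splitting in hand, I would run the same induction used in the proof of Theorem \ref{main}, building at each stage an equivariant symplectic plumbing of the intersections $\{X_{I\cup\{i\}}\}_{i\notin I}$ inside each $X_I$, together with an equivariant symplectic embedding into $X_I$. The base case is the equivariant symplectic tubular neighborhood theorem (Theorem \ref{tubularnbhd}) applied to $X_{\mathcal I}\subset X_{\mathcal I\setminus\{i\}}$, and the final stage yields the full equivariant symplectic $n$-plumbing. The inductive step requires a $G$-equivariant upgrade of Proposition \ref{symplembedding}, which I treat next.

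For the equivariant Proposition \ref{symplembedding}, I would first invoke Proposition \ref{rigidequivplum} to produce a $G$-equivariant smooth rigid embedding $F$ with $d_{X_i}F=(g_i,\mathrm{id})$ and plumbing data compatible with the equivariant splitting. Next, the compatible symplectic connections of Lemma \ref{nconnections} can be taken $G$-invariant: averaging a compatible symplectic connection over $G$ preserves both the symplectic-connection property and the compatibility identity $\alpha_i=(\Phi_I^i)_*(\alpha_i|_{X_I})$, since the $\Phi_I^i$ are equivariant and both conditions are affine. The induced form $\omega_{\mathcal I}$ on the plumbing (Proposition \ref{symplum}) is then $G$-invariant, being assembled from $G$-invariant connections and $G$-invariant base forms, with the linearized $G$-action on each $NX_i$ acting by symplectic bundle maps. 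Because $d_{X_i}F=g_i\times\mathrm{id}$, the form $F^*\omega$ agrees with $\omega_{\mathcal I}$ along $\bigcup_i X_i$; applying the equivariant Moser argument (Lemma \ref{equivlemma1}) to $F^*\omega$ and $\omega_{\mathcal I}$ yields a $G$-equivariant $\phi$ fixing $\bigcup_i X_i$ with $\phi^*F^*\omega=\omega_{\mathcal I}$, and $f=F\circ\phi$ is the desired equivariant symplectic embedding.

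The main obstacle is the bookkeeping that keeps equivariance and plumbing-compatibility true \emph{simultaneously} across the induction, rather than any single hard estimate. In particular one must verify that averaging never destroys the delicate compatibility conditions of Definition \ref{symplumdata}: since averaging commutes with the equivariant bundle isomorphisms $\Phi_I^{I'}$ and with restriction to invariant strata, and since the curvature-vanishing directions of a compatible connection are preserved by $G$ (as $G$ preserves each stratum), the averaged objects remain compatible. The one place deserving genuine care is confirming that the equivariant Moser form built stratum-by-stratum in Lemma \ref{equivlemma1} still vanishes along every $X_i$ and interpolates equivariantly; this is precisely what the $G$-invariant exponential map supplies, so the induction closes.
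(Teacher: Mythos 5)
Your proposal is correct and follows essentially the same route as the paper: the paper's proof likewise takes the canonical $\omega$-orthogonal splittings $g_i$ (automatically equivariant since $G$ acts symplectically), substitutes Proposition \ref{rigidequivplum} for Proposition \ref{plumbingsexist} and Lemma \ref{equivlemma1} for Lemma \ref{lemma1}, chooses the compatible connections $\alpha_i$ to be $G$-equivariant so that Lemma \ref{symplum} yields a $G$-invariant $\omega_{\mathcal I}$, and then reruns the induction of Theorem \ref{main}. Your extra verification that Haar averaging preserves the affine compatibility conditions is a more explicit justification of the step the paper states without proof, not a different argument.
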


\begin{proof}
We need to merge Proposition \ref{equivplum} and Theorem \ref{main}. 

Consider the splitting $g_i:NX_i\rightarrow T_{X_i}M$ given by $\omega$. Since $G$ acts by symplectomorphisms, each $g_i$ must be equivariant. This means we can use Proposition \ref{rigidequivplum} instead of Proposition \ref{plumbingsexist} in the proof of Theorem \ref{main}.
Similarly, we can substitute Lemma \ref{lemma1} with Lemma \ref{equivlemma1}. When the data is equivariant, Lemma \ref{symplum} can produce a symplectic form $\omega_{\mathcal I}$ on the plumbing which is $G$-invariant: just choose each connection $\alpha_i$ to be $G$-equivariant on the $G$-bundle $NX_i$. After these modifications, the proof of Theorem \ref{main} will hold in the equivariant case!
\end{proof}

\begin{appendix}
    \section{A few words on positive intersection}

What happens if the $X_i$ that we consider are transverse symplectic submanifolds of $(M,\omega)$, but the intersection is not orthogonal? 

A nice formulation as in Corollary \ref{maincor} is not possible, as it implies the orthogonality of the collection.
Lemma \ref{lemma1} still holds, therefore the symplectic form on a neighborhood of $\bigcup_i X_i$ in $M$ is fully determined by the restriction of the symplectic form to $\bigcup_i X_i$.
On the other hand, there isn't a clear model for the form close to $\bigcup_i X_i$.

We could try to create a plumbing $N(\bigcup_i X_i)$ and build an ad hoc symplectic form $\omega_\mathcal I$ such that $\omega_\mathcal I(v,w)=\omega(v,w)$ for $v,w\in \bigcup_i T(X_i)$. A sketch of how to do it would be as follows:
\begin{itemize}
\item Construct the plumbing; produce a symplectic form on the plumbing as in Proposition \ref{symplum}, so that the intersection of the submanifolds is orthogonal;
\item Close to the intersection of $X_i, X_j$, define a new form such that the angle between $X_i, X_j$ is as wanted;
\item Interpolate the forms to obtain a symplectic form on the whole plumbing.
\end{itemize}

As it turns out, the interpolation is not a very easy task (as usual, when interpolating two symplectic forms, we need to worry about nondegeneracy). In particular, the suggested strategy fails unless one makes some assumptions.

The following definition of \textbf{positivity} is borrowed from \cite{mclean}, and it is a generalization of the definition for the intersection at a point:

\begin{defn}
Let $I=I_1\bigsqcup I_2 \subseteq \mathcal I$ (where $\bigsqcup$ indicates a disjoint union); then $TM|_{X_I}\cong TX_I\oplus N_1\oplus N_2$, where $N_i$ represents the symplectic normal bundle of $X_I$ in $X_{I_i}$.
Each bundle $TX_I$, $N_1$, $N_2$, $TM$ has an orientation induced by $\omega$. 

The intersection of $\{X_i\}_{i\in\mathcal I}$ is \textbf{positive} if, for all $I, I_1, I_2$ as above, the orientation induced by $\omega$ on $TM$ agrees with the induced orientation on $ TX_I\oplus N_1\oplus N_2$.
\end{defn}

Under this condition, one could try to define a symplectic plumbing to get an analog of Theorem \ref{main} for positive intersection. This doesn't seem interesting to pursue, since the most desirable local model is the one that only exists for orthogonal submanifolds.

The following result is attainable through interpolation of symplectic forms on a plumbing. It has been proved in \cite{mtz_nc}. Alsternatively, Lemma 5.3 of \cite{mclean} also can be used to show the following.

\begin{prop}\label{orthogdef}
If a collection of symplectic submanifolds intersect positively, then there is a symplectic isotopy, supported close to the intersection, deforming the symplectic submanifolds into orthogonally intersecting submanifolds. In particular, the isotopy preserves positivity of intersections at all times.
\end{prop}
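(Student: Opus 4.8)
The plan is to realize the deformation as an ambient isotopy $\{\Psi_t\}_{t\in[0,1]}$ of $M$, with $\Psi_0=\mathrm{id}$ and $\Psi_t$ equal to the identity outside an arbitrarily small neighborhood of the intersection locus $Z=\bigcup_{i\neq j}(X_i\cap X_j)$, such that each $X_i^t:=\Psi_t(X_i)$ is symplectic for every $t$ and the $X_i^1$ meet orthogonally. Note that $\Psi_t$ itself need not be a symplectomorphism: a \emph{symplectic isotopy of submanifolds} only requires the submanifolds to stay symplectic, which is an open, hence stable, condition. The whole construction is driven by a pointwise straightening of the tangent configuration along the strata, and is organized by reverse induction on the depth $\#I$ of the strata $X_I=\bigcap_{i\in I}X_i$, treating the deepest strata first, with the supports of the successive isotopies nested so tightly that orthogonality already achieved at a deeper stratum is never disturbed.

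The crux is a linear lemma. Fix $p\in X_I$; then $T_pM=T_pX_I\oplus\bigoplus_{i\in I}N_i$, where $N_i$ is the symplectic normal of $X_I$ inside $X_{I\setminus i}$ and the sum is direct by transversality, each $N_i$ symplectic. Orthogonality (Definition \ref{orthog}) is the statement that the $N_i$ are mutually $\omega$-orthogonal. I would prove: if the configuration $(N_i)$ is positive, there is a smooth path $g_t\in GL(T_pM)$, $g_0=\mathrm{id}$, fixing $T_pX_I$, such that $g_t(N_i)$ stays a positive symplectic configuration for all $t$ and $g_1(N_i)$ are mutually $\omega$-orthogonal. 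Peeling off one summand at a time reduces this to two symplectic subspaces $U_1,U_2$ with $V=U_1\oplus U_2$. Writing $U_2$ as the graph of a linear map $S:U_1^{\omega}\to U_1$ (the projection $U_2\to U_1^{\omega}$ along $U_1$ is an isomorphism) and scaling, $U_2^t=\{\,tSu+u : u\in U_1^{\omega}\,\}$, one computes
\[
\omega|_{U_2^t}(u,u')=t^2\,\omega_{U_1}(Su,Su')+\omega_{U_1^{\omega}}(u,u'),
\]
so $U_2^t$ is symplectic precisely when $t^2A+\mathrm{id}$ is invertible, where $A$ is the $\omega_{U_1^{\omega}}$-self-adjoint operator defined by $\omega_{U_1^{\omega}}(Au,u')=\omega_{U_1}(Su,Su')$. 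When $U_1,U_2$ are jointly tamed by a compatible structure the scaling segment already stays nondegenerate and ends at the orthogonal pair $U_2^1=U_1^{\omega}$; in general one must instead appeal to the connectivity of the space of positive symplectic configurations, which is the content of Lemma~5.3 of \cite{mclean} (and of \cite{mtz_nc}). This is the step I expect to be the real work.

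Next I would globalize. The operator $g_t(p)$ can be chosen to depend smoothly on $p$, since the graph-and-scale construction is canonical once a compatible metric is fixed and local choices are merged by a partition of unity. Over a fixed stratum $X_I$ I would pick a tubular neighborhood identifying a neighborhood of $X_I$ with a neighborhood of the zero section in $\bigoplus_{i\in I}N_i\to X_I$, in which each $X_i$ is, to first order, the coordinate slice $\{v_i=0\}$; the fiberwise linear family $g_t(p)$ then defines a fiber-preserving isotopy of this model moving the slices to the orthogonal ones along $X_I$. Multiplying the generating vector field by a bump function in the fiber radius localizes the isotopy to an arbitrarily small neighborhood of $X_I$, and the reverse-depth induction with nested supports guarantees compatibility: upon reaching a stratum of depth $k$, the deeper strata have already been orthogonalized, and the new isotopy, supported in a thinner neighborhood, leaves them fixed.

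Finally I would check persistence. Each $X_i^t$ is a smooth submanifold by construction, and it is symplectic for all $t$ because the family of tangent configurations $g_t(N_i)$ was arranged to remain nondegenerate at every point and time; away from $Z$ nothing moves, and near $Z$ symplecticity is exactly the nondegeneracy the path preserves. Positivity is a locally constant invariant among nondegenerate transverse symplectic configurations, so it is preserved along the entire isotopy, giving the last clause of the proposition. Two points deserve care and are the main obstacles: the linear connectivity statement under mere positivity, where one must rule out intermediate degeneration using the orientation hypothesis rather than only handling the tame range; and the bookkeeping that the nested cutoffs in the depth induction do not reintroduce non-orthogonality at strata already treated. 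Both are manageable, but they are where the genuine content lies.
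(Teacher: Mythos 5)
First, a point of calibration: the paper does not actually prove Proposition \ref{orthogdef} --- it is imported from \cite{mtz_nc}, with the remark that Lemma 5.3 of \cite{mclean} gives an alternative route, and a one-line hint that the mechanism is interpolation of symplectic forms on a plumbing (the dual picture to yours: deform $\omega$ keeping the $X_i$ fixed, rather than isotope the $X_i$ in a fixed $\omega$). So your proposal cannot be measured against an internal argument and must stand on its own. Your linear computation is correct (modulo a harmless time-reversal slip: the orthogonal configuration is $U_2^0=U_1^\omega$, not $U_2^1$), your stratum-by-stratum outline is indeed the shape of the arguments in the cited works, and the final clause is fine, since positivity is an orientation condition and hence locally constant along any path of nondegenerate transverse configurations. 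But the proposal is not a proof: the central linear step --- connecting a positive configuration to an orthogonal one through nondegenerate positive configurations --- is outsourced to exactly the same Lemma 5.3 of \cite{mclean} that the paper itself cites, so no ground is gained there, and the part you do claim to handle yourself has a genuine gap.

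The gap is the globalization. The configuration near a point of $X_I$ is conical --- invariant under fiberwise dilations --- so the problem does not become easier as the neighborhood shrinks, and the cutoff step fails as stated: if you damp the straightening with a bump function $\beta(|v|)$ supported in an annulus whose width is comparable to its radius, the deformed $X_j$ in the transition region is the graph of $u\mapsto \beta(|u|)\,S u$, whose tangent planes differ from the planes along your chosen path by terms of size $\beta'(|u|)\,|Su|=O(1)$, \emph{independently} of the size of the support. So your claim that ``near $Z$ symplecticity is exactly the nondegeneracy the path preserves'' is false precisely in the transition annulus; symplecticity there is an extra nondegeneracy condition, and it is exactly the interpolation difficulty the appendix flags as failing ``unless one makes some assumptions''. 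It can be repaired --- e.g.\ cut off in $\log|v|$ over a range such as $[\epsilon^2,\epsilon]$, which makes the tangent-plane error $O(1/\log(1/\epsilon))$ and lets openness of nondegeneracy (uniform over the compact stratum and compact path) finish --- but this is the actual content of the deformation results in \cite{mtz_nc}, not bookkeeping. Relatedly, merging pointwise choices ``by a partition of unity'' is not available: the space of positive configurations, and the space of paths from a given configuration to an orthogonal one, are not convex, so what you need is a parametrized version of the connectivity lemma, relative to the already-orthogonalized deeper strata, not its pointwise statement plus averaging.
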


The original paper \cite{mtz} also proves the following Theorem, which we can now interpret as a consequence of Theorem \ref{main} and Proposition \ref{orthogdef}:

\begin{thm}
Given a family $\{X_i\}_{i\in\mathcal I}$ of symplectic submanifolds with positive intersection, $\omega$ can be deformed in a neighborhood of $\bigcup_i X_i$ in such a way that a neighborhood of $\bigcup_i X_i$ becomes symplectomorphic to $N_M(\bigcup_i X_i)$. 
\end{thm}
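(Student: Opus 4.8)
The plan is to reduce to the orthogonal case already settled by Theorem \ref{main}, using Proposition \ref{orthogdef} to orthogonalize the configuration near its intersection locus. First I would apply Proposition \ref{orthogdef} to obtain an isotopy of symplectic submanifolds, supported in an arbitrarily small neighborhood of $\bigcup_{i\neq j}X_i\cap X_j$, carrying the positively intersecting family $\{X_i\}_{i\in\mathcal I}$ to an orthogonally intersecting family $\{X_i'\}_{i\in\mathcal I}$; positivity, and with it the symplectic condition on every intersection stratum, is maintained throughout. Since the isotopy is compactly supported, the isotopy extension theorem upgrades it to a diffeotopy $\{\Psi_t\}_{t\in[0,1]}$ of $M$ with $\Psi_0=\mathrm{id}$ and $\Psi_1(X_i)=X_i'$ for every $i$.

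The key move is to transport the orthogonal structure back onto the original submanifolds by pulling the form back rather than pushing the submanifolds forward. Set $\tilde\omega:=\Psi_1^*\omega$. As $\Psi_1$ is a diffeomorphism, $\tilde\omega$ is again symplectic and coincides with $\omega$ outside the (small) support of $\Psi_1$, while the path $\Psi_t^*\omega$ is a deformation of $\omega$ through symplectic forms, supported near the intersections, ending at $\tilde\omega$. By construction $\Psi_1\colon(M,\tilde\omega)\to(M,\omega)$ is a symplectomorphism sending each $X_i$ to $X_i'$ and each stratum $X_I$ to $X_I'$. Because orthogonality in the sense of Definition \ref{orthog} is a symplectic condition, the original family $\{X_i\}_{i\in\mathcal I}$ intersects orthogonally with respect to $\tilde\omega$, and each $X_i$ is $\tilde\omega$-symplectic.

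It then remains to apply Theorem \ref{main} to $\{X_i\}_{i\in\mathcal I}$ inside $(M,\tilde\omega)$. This produces symplectic plumbing data, an induced symplectic form $\omega_{\mathcal I}$ on $N_M(\bigcup_i X_i)$, and a symplectomorphism relative to $\bigcup_i X_i$ from a neighborhood of $\bigcup_i X_i$ in $(M,\tilde\omega)$ onto a neighborhood of the zero section in $(N_M(\bigcup_i X_i),\omega_{\mathcal I})$. This is exactly the assertion of the theorem: after deforming $\omega$ to $\tilde\omega$ in a neighborhood of the intersections, a neighborhood of $\bigcup_i X_i$ becomes symplectomorphic to $N_M(\bigcup_i X_i)$. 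The ``moving the submanifolds'' phrasing is equivalent and is obtained by composing $\Psi_1$ with the neighborhood symplectomorphism that Theorem \ref{main} furnishes for the already-orthogonal family $\{X_i'\}$ in $(M,\omega)$.

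The genuine analytic content --- orthogonalizing a positively intersecting configuration by a deformation localized at the intersections --- is precisely Proposition \ref{orthogdef}, imported from \cite{mtz_nc} (or extractable from \cite{mclean}); once it is available, the present argument is bookkeeping. The only points needing care are that the interpolating forms $\Psi_t^*\omega$ remain nondegenerate along the whole path, which is automatic since each is the pullback of $\omega$ by a diffeomorphism, and that the orthogonality established for $\{X_i'\}$ genuinely transports to $\{X_i\}$ under $\Psi_1$, so that the hypotheses of Theorem \ref{main} are satisfied for $\tilde\omega$.
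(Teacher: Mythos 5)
Your proof is correct and takes exactly the route the paper intends: the paper offers no detailed argument, stating only that the theorem "can now be interpreted as a consequence of Theorem \ref{main} and Proposition \ref{orthogdef}," which is precisely your reduction. Your writeup merely supplies the standard bookkeeping (isotopy extension to an ambient diffeotopy, pulling back $\omega$ so that the original family becomes orthogonal for $\tilde\omega$, then invoking Theorem \ref{main}) that makes this reduction explicit.
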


\begin{rmk}
The result discussed in this Appendix is not a result connected to the flexibility of symplectic forms: a metric could also be deformed close to an intersection to assume a standard model. On the other hand, Theorem \ref{main} is a result of the flexibility of symplectic forms.
\end{rmk}

\end{appendix}

\bibliography{mybib}

\begin{thebibliography}{TMZ14b}

\bibitem[GS08]{toricdeg}
M.~Gross and B.~Siebert.
\newblock An invitation to toric degenerations.
\newblock arXiv:0808.2749v2, 2008.

\bibitem[Gua]{mymain}
R.~Guadagni.
\newblock Lagrangian torus fibrations and toric degenerations.
\newblock In preparation.

\bibitem[McL12]{mclean}
M.~McLean.
\newblock The growth rate of symplectic homology and affine varieties.
\newblock 2012.
\newblock Geom. Funct. Anal. 22.

\bibitem[Mil65]{Milnor}
J.~Milnor.
\newblock Lectures on the h-cobordism theorem.
\newblock Princeton University Press, 1965.

\bibitem[MS98]{mcdsal}
D.~McDuff and D.~Salamon.
\newblock {\em Introduction to Symplectic Topology}.
\newblock Oxford mathematical monographs. Clarendon Press, 1998.

\bibitem[TMZ14a]{mtz_nc}
M.~Tehrani, M.~McLean, and A.~Zinger.
\newblock Normal crossings singularities for symplectic topology.
\newblock 2014.
\newblock https://arxiv.org/abs/1410.0609.

\bibitem[TMZ14b]{mtz}
M.~Tehrani, M.~McLean, and A.~Zinger.
\newblock The smoothability of normal crossings symplectic varieties.
\newblock 2014.
\newblock https://arxiv.org/abs/1410.2573v2.

\bibitem[Wei71]{weinstein}
A.~Weinstein.
\newblock Symplectic manifolds and their lagrangian submanifolds.
\newblock {\em Advances in Mathematics}, 6, 1971.
\newblock doi:10.1016/0001-8708(71)90020-X.

\end{thebibliography}
\bibliographystyle{alpha}

\end{document}